\newif\ifdraft
\newtheorem{theorem}{Theorem}
\theoremstyle{plain}
\newtheorem{corollary}[theorem]{Corollary}
\newtheorem{definition}[theorem]{Definition}
\newtheorem{lemma}[theorem]{Lemma}
\newtheorem{proposition}[theorem]{Proposition}
\newtheorem{remark}[theorem]{Remark}
\numberwithin{equation}{section}
\def\D{\mathscr{D}}
\def\R{\mathbb{R}}
\def\ND{\zeta}
\def\W{{\mathbb W}}
\def\WW{{\mathbf W}}
\def\ZZ{{\mathbf Z}}
\begin{document}
\def\cprime{$'$}
\def\cprime{$'$}

\title[Variational rough PDEs]{Existence, uniqueness and stability of semi-linear rough partial differential equations}

\author{Peter K. Friz}
       \address{TU Berlin, Institut f\"ur Mathematik, MA 7-2\\
Strasse des 17. Juni 136\\
10623 Berlin\\
Germany\\
and\\
Weierstrass-Institut f\"ur Angewandte Analysis und Stochastik \\
Mohrenstrasse 39 \\
10117 Berlin \\
Germany. Financial support by the DFG via Research Unit FOR 2402 is gratefully acknowledged.}
       \email{friz@math.tu-berlin.de, friz@wias-berlin.de}

\author{Torstein Nilssen}
       \address{TU Berlin, Institut f\"ur Mathematik, MA 7-2\\
Strasse des 17. Juni 136\\
10623 Berlin\\
Germany. Financial support by the DFG via Research Unit FOR 2402 is gratefully acknowledged.}
       \email{nilssen@math.tu-berlin.de}

\author{Wilhelm Stannat}
       \address{TU Berlin, Institut f\"ur Mathematik, MA 7-2\\
Strasse des 17. Juni 136\\
10623 Berlin\\
Germany. Financial support by the DFG via Research Unit FOR 2402 is gratefully acknowledged.}
       \email{stannat@math.tu-berlin.de}

\begin{abstract}
We prove well-posedness and rough path stability of a class of linear and semi-linear rough PDE's on $\R^d$ using the variational approach.
This includes well-posedness of (possibly degenerate) linear rough PDE's in $L^p(\R^d)$, and then -- based on a new method -- energy estimates for non-degenerate linear rough PDE's. We accomplish this by controlling the energy in a properly chosen weighted $L^2$-space, where the weight is given as a solution of an associated backward equation. These estimates then allow us to extend well-posedness for linear rough PDE's to semi-linear perturbations.
\end{abstract}

\keywords{stochastic partial differential equations, variational solutions,
Zakai equation, Feynman-Kac formula, rough partial differential equations,
rough paths}
\subjclass{60H15}
\maketitle

\section{Introduction}

In this paper we use the variational approach to prove well-posedness for a class of linear and semi-linear rough PDE's on $\R^d$ of the following type,
\begin{equation} \label{u_introSemiLinear}
du_{t}  =\left[ Lu_{t} + F(u_t) \right] dt +\Gamma  u_{t}d\mathbf{W}_{t} \ , \hspace{.5cm} u_0 \in L^p(\R^d) \ ,
\end{equation}
where $L$ and $\Gamma$ are (linear) differential operators of second (resp. first) order as detailed in \eqref{SecondOrder},\eqref{FirstOrder} below,  $\WW$ a (geometric) rough path and, with focus on the $L^2$-scale a Lipschitz non-linearity of the form 
$$
    F: H^1(\R^d) \to L^2 (\R^d) \ ,
$$
which allows for non-linear dependence on $\nabla u$. Integrated in time, the above equation reads
\begin{equation*}
u_t -u_s
=\int_{s}^{t} \left[ Lu_{r} + F(u_r) \right] dr+\int_{s}^{t}\Gamma  u_{r}d\mathbf{W}_{r},
\end{equation*}%
provided $u$ is sufficiently regular (in space) such as to make $Lu,\Gamma
 u$ and $F(u)$ meaningful, \textit{and} provided the last term makes sense as
rough integral. Since we do not expect our solutions to be regular in space,  $Lu$ and $\Gamma u$ are understood in the weak sense. More precisely, (\ref{u_introSemiLinear}) means that, for all $s<t$, on some time horizon $[0,T]$,
\begin{equation*}
( u_{t},\phi ) -( u_{s},\phi )
=\int_{s}^{t} \left[ ( u_{r},L^{\ast }\phi ) + (F(u_r), \phi) \right] dr+\int_{s}^{t}( u_{r},\Gamma ^{\ast }\phi ) d\mathbf{%
W}_{r},
\end{equation*}%
for suitable test functions $\phi$. We will use a non-degeneracy condition on $L$ to define $F(u)$ and the last term is a real valued rough integral.
Note that the rough integral will be defined as having the local expansion
\begin{equation} \label{introExpansion}
\int_{s}^{t} (  u_{r}, \Gamma^* \phi) d\mathbf{W}_{r} \simeq ( u_{s},\Gamma^*\phi)  (W_t - W_s)  + ( u_{s},\Gamma^* \Gamma^* \phi) \int_s^t (W_r - W_s) dW_r .
\end{equation}

Let us first consider the linear case, that is
\begin{equation} \label{u_introLinear}
du_{t}  =Lu_{t}dt  +\Gamma u_{t}d\mathbf{W}_{t},  \hspace{.5cm} u_0 \in L^p(\R^d) .
\end{equation}
This setting already contains the model problem $du_{t}  = \Delta u_{t}dt  + V \cdot \nabla u(t) d\mathbf{W}_{t}$, with $u_0 = u_0(x) \in L^2(\R^d)$ and a vector field $V=V(x)$, studied
in \cite{DGHT}, subsequently analyzed in detail in \cite{HH} with general second (resp. first) linear differential operators. Our methods are different, and we are instead able to adapt the rough path Feynman-Kac representation (cf. \cite{DFS}, \cite{FH14}) in terms of the diffusion process with generator $L$ and $\Gamma$ as well as the rough signal $\WW$.
In contrast to \cite{DFS}, we here consider the solution $u$ and the expansion \eqref{introExpansion} as functions in an appropriate Sobolev-space rather than pointwise. 

\medskip 
To show uniqueness, we introduce the corresponding backward equation
\begin{equation} \label{u_introLinearBackward}
-dv_{t}  =L^*v_{t}dt  +\Gamma^* v_{t}d\mathbf{W}_{t} ,  \hspace{.5cm} v_T \in L^q(\R^d) ,
\end{equation}
which again admits a rough path Feynman-Kac representation. Using a duality argument,  
\begin{equation} \label{DualTrick}
\int_{\R^d} u_T(x) v_T(x) dx =  \int_{\R^d} u_0(x) v_0(x) dx \ ,
\end{equation}
then allows us to infer uniqueness of the forward equation \eqref{u_introLinear} from existence of the backward equation \eqref{u_introLinearBackward}, and vice versa. We shall frequently jump between the two equations \eqref{u_introLinear} and \eqref{u_introLinearBackward} whenever one equation is more convenient. Note that this analysis is valid also when $L$ is degenerate. 
 
 \bigskip
 
In the case that where $L$ is non-degenerate, we specialize to the $L^2$-scale, and obtain energy estimates, similar to these of \cite{DGHT, HH}, but with different methods. It will not surprise readers familiar with Feynman-Kac theory for PDE's, that we require more regularity assumption than what a pure PDE approach (including \cite{HH}) requires. In turn, our construction yields fine-information about the stochastic characteristics in term of hybrid rough / It\^o diffusions, enables us solve an open problem  in the afore-mentioned works concerning the rough path stability in the natural function space where the solutions live. That is, by a direct analysis of the Feynman-Kac formula and a corresponding Lyapunov function, we can show continuity of the solution map as function from geometric rough path space $\mathcal{C}_g^\alpha([0,T])$, with ``Brownian'' roughness $\alpha \in (1/3,1/2]$, into
$$
       C([0,T]; L^2(\R^d)) \cap L^2([0,T]; W^{1,2}(\R^d)). 
$$
equipped with its natural Banach structure. (This is in contrast to \cite[Thm. 2]{HH} where compactness argument lead to some sub-optimal spaces in the continuity statement.)

\bigskip

 On a technical level, we rely on the existence of a suitable weight function $f= f_t(x)$, given as the solution of an associated backward rough PDE, such that
\begin{equation} \label{u_SquaredF}
d (u_t^2,f_t)  =  - ( |\sigma \nabla u_t|^2,  f_t) dt
\end{equation}
where $\sigma$ is a square root of the diffusion matrix and we note that there is no rough path term. 
In addition we show that $f$ is bounded away from $0$ and $\infty$ which allow us to infer from \eqref{u_SquaredF} the energy estimates for variational solutions. We note that a similar technique was applied in \cite{HNS} to obtain energy estimates for a class of rough PDE's of Burgers type. 

\newpage

With this precise linear solution theory in place, we then develop a novel (two-parameter) semi-group view which contains the effect of the rough driver. This allows for 
semi-linear perturbations, taking a mild solution point of view, with the appealing feature that we can deal with semi-linear rough PDE's essentially by semi-group methods, without further direct input from rough path analysis.

More specifically, we show well-posedness of the semi-linear rough PDE \eqref{u_introSemiLinear} by introducing the mild formulation
\begin{equation} \label{u_introSemiLinearMild}
u_{t}  = P_{0t}^{\WW} u_0 + \int_0^t P_{st}^{\WW} F(u_s) ds
\end{equation}
where $P_{st}^{\WW}g$ the solution of \eqref{u_introLinear} at time $t$ when started at time $s$ in $g \in L^2(\R^d)$. Well-posedness of \eqref{u_introSemiLinearMild} is shown using a standard fix point argument. 
Due to a technical difficulty (which could be avoided by working with $p$-variation paths instead of H\"{o}lder continuous paths) we only prove that the formulation \eqref{u_introSemiLinear} implies the formulation \eqref{u_introSemiLinearMild} and {\em not} the converse implication. This however, is enough to show well-posedness of \eqref{u_introSemiLinear}, since we can use rough path continuity to show {\em existence} of a solution to \eqref{u_introSemiLinear}. Uniqueness follows immediately from the well-posedness of \eqref{u_introSemiLinearMild}.

A crucial step in our analysis underlying \eqref{DualTrick}, \eqref{u_SquaredF} and then \eqref{u_introSemiLinear} $\implies$ \eqref{u_introSemiLinearMild} relies on a product formula for rough evolutions in Banach-spaces, see Lemma \ref{prodFormula}.

\bigskip

To the best of our knowledge, semi-linear rough PDE's of the type \eqref{u_introLinear} have not been considered in the variational setting in the literature so far. In the linear case, the recent works \cite{BG} (resp. \cite{DGHT}) consider the case $L=0$ (resp. $L=\Delta $)
and $\Gamma$ of pure transport type and use an intricate doubling of the variables argument and a rough version of the Gronwall lemma, first introduced in \cite{DGHT}, to obtain energy estimates. Using these techniques, the work \cite{HH} study the forward equation in \eqref{u_introLinear} in divergence form under optimal conditions on the coefficients in the drift term.

\section{Notation and definitions}

For $T > 0$ we define $\Delta([0,T]) := \{ (s,t) \in [0,T]^2 :  s <t \}$. For a Banach space $E$, a mapping $g : \Delta([0,T]) \rightarrow E$ will be said to be $\alpha$-H\"{o}lder continuous provided
$$
\|g \|_{\alpha} := \sup_{ (s,t) \in \Delta([0,T])  } \frac{| g_{st} |}{|t-s|^{\alpha}}  < \infty.
$$
We denote by $C^{\alpha}_2([0,T];E)$ the space of all $\alpha$-H\"{o}lder continuous functions equipped with the above semi-norm. We let $C^{\alpha}([0,T];E)$ the set of all $f :[0,T]\rightarrow E$ such that $\delta f \in C^{\alpha}_2([0,T];E)$ where we have defined the increment $\delta f_{st} : = f_t - f_s$. The second order increment for a mapping $g : \Delta([0,T]) \rightarrow E$ is by somewhat abusive notation defined by $\delta g_{s \theta t} := g_{s  t} - g_{ \theta t} - g_{s \theta }$.

\bigskip

We shall work with the usual Sobolev spaces $W^{n,p}(\R^d)$ with norm denoted $\|\cdot \|_{n,p}$, and for simplicity we denote by $H^n := W^{n,2}(\R^d)$ with norm $\| \cdot \|_n := \|\cdot \|_{n,2}$. 
For smooth and compactly supported functions $f$ and $g$ on $\R^d$, denote by $(f,g) = \int_{\R^d} f(x) g(x) dx$ and by the same bracket the extension of the bi-linear mapping
$$
(\cdot, \cdot ) : (W^{n,p}(\R^d))^* \times W^{n,p}(\R^d) \rightarrow \R.
$$
Moreover, when $q$ is such that $q^{-1} + p^{-1} = 1$, we write $W^{-n,q}(\R^d) := (W^{n,p}(\R^d))^*$. 

\bigskip

We consider the following second order operator
\begin{equation} \label{SecondOrder}
L\phi(x) = \frac{1}{2} \sigma_{i,k}(x) \sigma_{j,k}(x) \partial_i \partial_j \phi(x) + b_j(x) \partial_j \phi(x) + c(x) \phi(x)
\end{equation}
and the first order operator 
\begin{equation} \label{FirstOrder}
\Gamma^j \phi(x) = \beta_j^n(x) \partial_n \phi(x) + \gamma_j(x) \phi(x) . 
\end{equation}
Here, and for the rest of the paper we use the convention of summation over repeated indices.

The formal adjoints of these operators are given by
$$
L^*\phi(x) = \frac{1}{2} \sigma_{i,k}(x) \sigma_{j,k}(x) \partial_i \partial_j \phi(x) + \tilde{b}_j(x) \partial_j \phi(x) + \tilde{c}(x) \phi(x)
$$
and
$$
\Gamma^{j,*} \phi(x) = - \beta_j^n(x) \partial_n \phi(x) + \tilde{\gamma}_j(x) \phi(x) ,
$$
where
\begin{equation} \label{dualNotation}
\begin{array}{ll}
\tilde{b}_j(x)  = \partial_i ( \sigma_{i,k}(x) \sigma_{j,k}(x) ) - b_j(x),  &  \tilde{c}(x) = \frac{1}{2} \partial_i \partial_j ( \sigma_{i,k}(x) \sigma_{j,k}(x)) - \partial_j b_j(x) + c(x) \\
\tilde{\gamma}_j(x) = \gamma_k(x) - \partial_n \beta_j^n(x) . & \\
\end{array}
\end{equation}

\bigskip

Given a smooth $e$-dimensional path $W = (W^1, \dots W^e)$, we can define
\begin{equation} \label{IteratedIntegral}
\W^{i,j}_{st} := \int_s^t \delta W^i_{s r} \dot{W}^j_r dr .
\end{equation}
In the case of a irregular path of, e.g. a sample path of the Brownian motion, the above definition does not make sense, since $W$ is not differentiable but only $\alpha$-H\"{o}lder continuous for $\alpha$ arbitrarily close to $1/2$. In that case, however, we could choose to define the integration $\int W^i dW^j$ as e.g. as an It\^{o} integral or a Stratonovich integral. One can then show that for almost all sample paths of the Brownian motion, $\W \in C^{2 \alpha}_2([0,T]; \R^{e \times e})$ and we have the so-called Chen's relation
\begin{equation} \label{ClassicalChensRelation}
\delta \W_{s \theta t}^{i,j} := \W_{s  t}^{i,j} - \W_{ \theta t}^{i,j} - \W_{s \theta }^{i,j} = \delta W_{s \theta}^i \delta W_{\theta t}^j . 
\end{equation}

Motivated by the above, we will say that $\WW = ( W , \W) \in C^{\alpha}([0,T] ;\R^e) \times C^{ 2 \alpha}_2([0,T]; \R^{e \times e})$ for $\alpha \in (\frac{1}{3}, \frac{1}{2})$ is a \emph{rough path} provided \eqref{ClassicalChensRelation} holds. Denote $\mathscr{C}^{\alpha}([0,T];\R^e)$ the set of all rough paths and by $\|\WW\|_{\alpha} := \|W\|_{\alpha} + \sqrt{ \| \W \|_{2 \alpha} }$ the induced metric. We shall say that $\WW$ is a geometric rough path if there exists a sequence, $W(n)$, of smooth paths such that if $\W(n)$ defined by \eqref{IteratedIntegral} with $W(n)$ instead of $W$, we have that $\WW(n) \rightarrow \WW$ with respect to the metric $\| \cdot \|_{\alpha}$. We denote by $\mathscr{C}^{\alpha}_g$ the subset of all geometric rough paths, and notice that they satisfy the following symmetry 
$$
\W_{s  t}^{i,j} + \W_{s  t}^{j,i} = \delta W_{st}^i \delta W_{st}^j.
$$
Indeed, it is enough to notice that this is satisfied for any smooth path, and then take the limit in the rough path metric.

We shall use the notion of a controlled rough path as first introduced in \cite{Gubinelli04}.

\begin{definition}[\textbf{Controlled path}] \label{def:ControlledPath}
A pair of functions 
$$
Y : [0,T]  \rightarrow  E \hspace{.5cm} and \hspace{.5cm} Y' : [0,T]  \rightarrow  E^{e}
$$ 
is said to be controlled by $W$ in $E$ provided
$$
|  \delta  Y_{st}  -   (Y_s')^{i}  W_{st}^i | \lesssim |t-s|^{2 \alpha} \hspace{.5cm} and \hspace{.5cm} | \delta Y'_{st}| \lesssim |t-s|^{\alpha} . 
$$
We denote by $\| (Y,Y') \|_{\alpha, W;E}$ the infimum over all constants such that the above analytic bounds hold, and by $\mathscr{D}_W^{2 \alpha}([0,T]; E)$ the linear space of all such pairs $(Y,Y')$, which we equip with the semi-norm $\| (\cdot,\cdot') \|_{\alpha, W;E}$. We shall sometimes refer to $Y'$ as the {\em Gubinelli}-derivative. 

\end{definition}

\begin{remark}
The statement "controlled by $W$ \emph{in $E$}" is not standard, but it allows us to differentiate between the strong and weak solutions, by letting $E$ be either $L^p(\R^d)$ or $W^{-3,p}(\R^d)$ respectively.
\end{remark}

The main technical tool for constructing integrals w.r.t. rough paths is the following result. For a proof, see e.g. \cite{FH14}.

\begin{lemma}[\textbf{Sewing lemma}] \label{sewingLemma}
Assume $G : \Delta([0,T]) \rightarrow E$ be such that $|\delta G_{s \theta t}| \leq K |t-s|^{\alpha}$ for some $\alpha > 1$. Then there exists a unique pair $I: [0,T] \rightarrow E$ and $I^{\natural} : \Delta([0,T]) \rightarrow E$ satisfying
$$
\delta I_{st} = G_{st} + I_{st}^{\natural}
$$
where $|I_{st}^{\natural}| \leq K C_{\alpha}|t-s|^{\alpha}$ where $C_{\alpha}$ depends only on $\alpha$. Moreover, if $|G_{st}| \lesssim |t-s|^{\beta}$ we have $|I_{st}| \lesssim |t-s|^{\beta}$.

\end{lemma}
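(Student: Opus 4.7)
The plan is to construct $I$ directly as a limit of Riemann-type sums over dyadic refinements of $[s,t]$, exploiting $\alpha>1$ to obtain summable telescoping bounds. For $(s,t)\in\Delta([0,T])$, let $\pi_n$ denote the uniform dyadic partition of $[s,t]$ into $2^n$ subintervals and set $I^n_{st}:=\sum_{[u,v]\in\pi_n}G_{uv}$, so that $I^0_{st}=G_{st}$. Refining $\pi_n$ to $\pi_{n+1}$ replaces each $G_{uv}$ with $G_{uw}+G_{wv}$ where $w:=(u+v)/2$, producing
\[
I^{n+1}_{st}-I^n_{st}=-\sum_{[u,v]\in\pi_n}\delta G_{u w v}.
\]
The hypothesis applied to each of the $2^n$ summands gives $|I^{n+1}_{st}-I^n_{st}|\le K|t-s|^\alpha\,2^{-n(\alpha-1)}$, which is summable since $\alpha>1$. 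Hence $I_{st}:=\lim_n I^n_{st}$ exists in $E$ and $|I_{st}-G_{st}|\le KC_\alpha|t-s|^\alpha$ with $C_\alpha:=\sum_{n\ge 0}2^{-n(\alpha-1)}$.

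The next step is to verify additivity $I_{s\theta}+I_{\theta t}=I_{st}$ for any $\theta\in(s,t)$. The key auxiliary fact is that $\sum_{[u,v]\in\pi}G_{uv}$ converges to the same limit $I_{st}$ along \emph{any} sequence of partitions of $[s,t]$ with mesh tending to zero; this is proved by comparing two partitions through their common refinement and again using the cocycle bound on $\delta G$. Additivity then follows by concatenating partitions of $[s,\theta]$ and $[\theta,t]$. Setting $I_t:=I_{0t}$ with $I_0:=0$ gives $\delta I_{st}=I_{0t}-I_{0s}=I_{st}$, and $I^\natural_{st}:=\delta I_{st}-G_{st}$ is the required remainder, bounded by $KC_\alpha|t-s|^\alpha$.

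For uniqueness, if $(\tilde I,\tilde I^\natural)$ is another such pair, then $J_{st}:=\delta I_{st}-\delta\tilde I_{st}=I^\natural_{st}-\tilde I^\natural_{st}$ is additive in $(s,t)$ and satisfies $|J_{st}|\le 2KC_\alpha|t-s|^\alpha$. Subdividing $[s,t]$ into $n$ equal pieces and using additivity yields $|J_{st}|\le 2KC_\alpha|t-s|^\alpha n^{1-\alpha}\to 0$ since $\alpha>1$, so $J\equiv 0$. The ``moreover'' clause is then immediate: on the bounded interval $[0,T]$, $|I_{st}|\le|G_{st}|+|I^\natural_{st}|\lesssim|t-s|^\beta+|t-s|^\alpha\lesssim|t-s|^\beta$, using $\alpha>1\ge\beta$ in the final estimate.

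The one step requiring actual work is the partition-independence of $I_{st}$, which underlies additivity at non-dyadic points. This is the technical heart of the lemma but is a standard refinement argument: for any two partitions $\pi,\pi'$ of $[s,t]$ one estimates $\sum_\pi G-\sum_{\pi'} G$ by passing through the common refinement $\pi\cup\pi'$ and controlling the resulting sum of $\delta G$ terms by $K(\max|v-u|)^{\alpha-1}|t-s|$, which vanishes with the mesh. Everything else reduces to the geometric-series bookkeeping enabled by $\alpha>1$.
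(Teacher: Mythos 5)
Your proof is correct. The paper itself does not include a proof, deferring to \cite{FH14}; the argument there works directly with arbitrary partitions via the successive point-removal estimate, whereas you first build $I$ along dyadic refinements (which yields the clean geometric-series constant $C_\alpha=(1-2^{1-\alpha})^{-1}$) and then derive partition-independence as a separate step. Your ordering is the right one: additivity cannot be read off from the dyadic construction alone, since dyadic partitions of $[s,\theta]$ and $[\theta,t]$ do not concatenate to a dyadic partition of $[s,t]$. Note, however, that the ``standard refinement argument'' you invoke to estimate $\sum_{\pi}G-\sum_{\pi\cup\pi'}G$ really does require the careful point-selection device (at each stage one removes an interior point whose two neighbouring intervals together have length at most $2|v-u|/(r-1)$), since removing points in a naive order from $\pi\cup\pi'$ within a block $[u,v]\in\pi$ produces up to $k$ errors each of size $K|v-u|^\alpha$; only the clever selection gives the summable bound $\lesssim K|v-u|^\alpha$ per block and hence $\lesssim K(\operatorname{mesh}\pi)^{\alpha-1}|t-s|$ overall. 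So the dyadic route does not dispense with the machinery of the direct proof, it merely separates the existence bound from the partition-independence technicality. Finally, in the ``moreover'' clause you absorb $|t-s|^\alpha$ into $|t-s|^\beta$ via $\beta\le 1<\alpha$; that is the intended reading, and if ever $\beta>\alpha$ the same Riemann-sum estimate would force $\delta I\equiv 0$, so the conclusion holds in all cases.
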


For a controlled path we may define the rough path integral of $u$ w.r.t. $W$ as follows; define the local expansion $G \in E^e$ with components
$$
G_{st}^j :=   Y_s W_{st}^{j} + (Y_s')^{ i}  \mathbb{W}_{st}^{i,j}.
$$
Using Chen's relation \eqref{ClassicalChensRelation} we have
$$
\delta G_{s \theta t}^j = - \left(  \delta Y_{s \theta} - (Y_s')^{i}  W_{s \theta}^i \right) W_{\theta t}^j -  (\delta Y_{s \theta}')^{i}  \mathbb{W}_{\theta t}^{i,j}
$$
so that
$$
| \delta G_{s \theta t} | \leq 2  \| (Y,Y') \|_{\alpha, W;E} \|\mathbf{W}\|_{\alpha}  |t-s|^{3 \alpha} .
$$
By Lemma \ref{sewingLemma} there exists a unique path $I$ with values in $E^e$ such that 
$$
\delta I_{st} = G_{st} + I_{st}^{\natural}
$$
and we have 
$$
| I_{st}^{\natural} | \leq C_{\alpha} \| (Y,Y') \|_{\alpha, W;E} \|\mathbf{W}\|_{\alpha} |t-s|^{3 \alpha} . 
$$

\begin{definition}
We denote by $\int_0^{\cdot} (Y,Y')_r d \mathbf{W}_r$ the path $I$ obtained in the above way, called the rough path integral of $Y$ against $W$.
\end{definition}

\begin{remark} \label{linearMappingRemark}
For a continuous linear mapping $T : E \rightarrow F$ we have
$$
T( \int_0^{\cdot} (Y, Y')_r d\WW_r ) =  \int_0^{\cdot} ( TY, TY')_r d\WW_r 
$$
where 
$$
TY : [0,T]  \rightarrow  F \hspace{.5cm} and \hspace{.5cm} TY' : [0,T]  \rightarrow  F^{e}
$$ 
is defined by $(T(Y'))^{i} = T( (Y')^{i})$.

In particular, if for some Banach space $V$ 
$$
Y : [0,T]  \rightarrow  V^* \hspace{.5cm} and \hspace{.5cm} Y' : [0,T]  \rightarrow  (V^*)^{e }
$$
satisfies 
$$
|  \delta  Y_{st}(\phi)  -   (Y_s')^{i}(\phi)  W_{st}^i | \lesssim |\phi|_V |t-s|^{2 \alpha} \hspace{.5cm} and \hspace{.5cm} | \delta (Y')^{i}_{st}(\phi)| \lesssim |t-s|^{\alpha} |\phi|_V
$$
for all $ \phi \in V$ we may define the rough path integral $\int_0^{\cdot} (Y,Y')_r d\WW_r : [0,T] \rightarrow (V^*)^e$ as the unique function satisfying 
$$
| \int_s^t (Y,Y')_r d\WW_r^j (\phi)  - Y_{s}(\phi)W_{st}^j - (Y_{s}')^{j,i}(\phi) \mathbb{W}_{st}^{j,i} | \lesssim |\phi|_V |t-s|^{3 \alpha}
$$
\end{remark}

The following lemma is the main technical tool of the paper.
It replaces the tensorization argument in \cite{BG}, \cite{DGHT} and \cite{HH}.

\begin{lemma} \label{prodFormula}
Assume $u : [0,T] \rightarrow E^*$ satisfies 
$$
u_t = u_0 + \int_0^t  A_r dr + \int_0^t (B_r,B'_r) d\WW_r
$$
for some $A \in L^2([0,T]; E^*)$ and $(B, B') = (B^j, (B')^j)$ controlled by $W$ in $E^*$, and we have set
$$
\int_0^t (B_r,B'_r) d\WW_r := \int_0^t ((B_r)^j,(B'_r)^j) d\WW_r^j.
$$

Moreover, assume $f: [0,T] \rightarrow E$ satisfies
$$
f_t = f_0 + \int_0^t K_r dr + \int_0^t (N_r,N'_r)d\WW_r
$$
for some $K \in L^2([0,T]; E)$ and $(N,N') = (N^j, (N')^j)$ controlled by $W$ in $E$. 
In addition, we assume that $u$ (respectively $f$) is controlled by $W$ in $E^*$ (respectively $E$). 

Then, if $\WW$ is a geometric rough path we have
\begin{align*}
u_t(f_t) = u_s(f_s) + \int_s^t A_r(f_r) + u_r(K_r) dr  + \int_s^t (M_r,M_r') d\WW_r
\end{align*}
where 
$$
M_t^j = (B^j_t,f_t) + (u_t,N^j_t) \hspace{1cm} (M_t')^{j,i} = ((B_t')^{j,i},f_t) + 2 (B_t^j,N_t^i)  + (u_t, (N_t')^{j,i}) . 
$$
\end{lemma}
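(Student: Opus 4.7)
The plan is to apply the sewing lemma (Lemma~\ref{sewingLemma}) to the natural second-order local expansion
\[
\tilde G_{st} := \int_s^t [A_r(f_r) + u_r(K_r)]\,dr + M^j_s W^j_{st} + (M'_s)^{j,i} \W^{i,j}_{st}
\]
of the scalar path $\Phi_t := u_t(f_t)$. First I would expand
\[
\delta \Phi_{st} = (\delta u_{st})(f_s) + u_s(\delta f_{st}) + (\delta u_{st})(\delta f_{st})
\]
and substitute the controlled-path expansions of $\delta u_{st}, \delta f_{st}$ coming from their rough evolution equations, collecting terms by order in $|t-s|^\alpha$. The $W^j$-coefficient collapses to $(B^j_s, f_s) + (u_s, N^j_s) = M^j_s$ by definition of $M^j$. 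The $\W^{i,j}$-coefficient collects $((B'_s)^{j,i}, f_s)$, $(u_s, (N'_s)^{j,i})$, and the leading cross term $(\delta u_{st})(\delta f_{st}) \approx (B^j_s, N^i_s) W^j_{st} W^i_{st}$. This is where the \emph{geometric} assumption on $\WW$ enters essentially: the identity $W^j_{st} W^i_{st} = \W^{i,j}_{st} + \W^{j,i}_{st}$ lets one absorb the cross term into a multiple of $\W^{i,j}_{st}$, producing the ``$2(B^j_s, N^i_s)$'' summand in $(M'_s)^{j,i}$ after relabeling summation indices.

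Next I would verify that $(M, M')$ is itself a controlled path in $\mathscr{D}^{2\alpha}_W([0,T]; \R)$ by estimating $|\delta M^j_{st} - (M'_s)^{j,i} W^i_{st}| \lesssim |t-s|^{2\alpha}$ directly from the assumed $\alpha$-regularity of $B, B', N, N'$ and the Hölder bounds on $u, f$ inherited from their evolution equations. Chen's relation~\eqref{ClassicalChensRelation} together with this controlled-path bound then yields $|\delta \tilde G_{s\theta t}| \lesssim |t-s|^{3\alpha}$, which is strictly super-linear since $\alpha > 1/3$, so Lemma~\ref{sewingLemma} applies to $\tilde G$ and identifies its sewing with $\int_s^t[A(f)+u(K)]\,dr + \int_s^t (M, M')_r\,d\WW_r$.

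The main obstacle is the analysis of the remainder $R_{st} := \delta\Phi_{st} - \tilde G_{st}$. Besides the harmless controlled-path $\natural$-remainders of order $|t-s|^{3\alpha}$, $R_{st}$ contains the drift mismatches $\int_s^t A_r(f_s - f_r)\,dr$ and $\int_s^t (u_s - u_r)(K_r)\,dr$. Since $A, K$ are only $L^2$ in time, Cauchy--Schwarz combined with $|\delta f_{sr}|_E \lesssim |r-s|^\alpha$ gives only the sub-optimal bound $|t-s|^{1/2+\alpha} \|A\|_{L^2([s,t])}$, which fails to be strictly super-linear for $\alpha < 1/2$. I would get around this by a telescoping argument over a partition $s = t_0 < \cdots < t_N = t$: writing $\Phi_t - \Phi_s = \sum_k \tilde G_{t_k t_{k+1}} + \sum_k R_{t_k t_{k+1}}$, the first sum converges to the sewing integral as the mesh shrinks, while a further Cauchy--Schwarz across partition intervals bounds $\sum_k |R_{t_k t_{k+1}}|$ by a quantity of the form
\[
(\max_k |t_{k+1}-t_k|)^\alpha \sqrt{t-s}\,(\|A\|_{L^2} + \|K\|_{L^2}) + (\max_k |t_{k+1}-t_k|)^{3\alpha-1}(t-s),
\]
which vanishes as the mesh tends to zero. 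The product formula then follows upon passing to the limit.
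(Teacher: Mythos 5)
Your decomposition of $\delta\bigl(u_t(f_t)\bigr)_{st}$, the use of the geometric shuffle $W^j_{st}W^i_{st} = \W^{i,j}_{st} + \W^{j,i}_{st}$ to turn the quadratic cross term into a $\W$-coefficient, and the appeal to uniqueness in the sewing lemma are exactly the paper's route. The one genuine difference is that the paper's written proof simply sets $A=K=0$ ``for simplicity,'' whereas you correctly observe that the drift mismatches $\int_s^t A_r(f_s-f_r)\,dr$ and $\int_s^t(u_s-u_r)(K_r)\,dr$ are only $O(|t-s|^{\alpha+1/2})$ when $A,K\in L^2([0,T])$, which is not super-linear for $\alpha\le 1/2$, so the naive sewing-uniqueness comparison of $\delta(u(f))_{st}$ with the local expansion does not close directly. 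Your telescoping argument over a shrinking partition — with the drift mismatches summing to $O\bigl((\max_k|t_{k+1}-t_k|)^\alpha\bigr)$ after a Cauchy--Schwarz across intervals, and the genuine $|t-s|^{3\alpha}$ controlled-path remainders summing to $O\bigl((\max_k|t_{k+1}-t_k|)^{3\alpha-1}(t-s)\bigr)$ — is a correct way to identify $u(f)$ with the sewing of $\tilde G$ and actually fills a small gap that the paper's simplification leaves implicit.
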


%
%
%

\begin{proof}
Assume for simplicity that $A = K = 0$. By definition of $u$, it is the unique path $[0,T] \rightarrow E^*$ such that 
$$
u^{\natural}_{st}(\phi) := \delta u_{st}(\phi) - \left[ B^j_s(\phi) W^j_{st} + (B_s')^{j,i}(\phi) \mathbb{W}^{i,j}_{st} \right] 
$$
satisfies $|u^{\natural}_{st}(\phi)| \lesssim |t-s|^{3 \alpha} |\phi| $. Moreover, by assumption we have
$$
u^{\flat}_{st} := \delta u_{st} - B_s^j W_{st}^j \in C^{2 \alpha}_2([0,T]; E^*)
$$

Similarly $f$ is the unique path $f : [0,T] \rightarrow E $ such that 
$$
f^{\natural}_{st} := \delta f_{st} - \left[ N^j_s W^j_{st} + (N_s')^{j,i} \mathbb{W}^{i,j}_{st} \right] 
$$
satisfies $|f_{st}^{\natural}|  \lesssim |t-s|^{3 \alpha}$, and by assumption we have
$$
f^{\flat}_{st} := \delta f_{st} - N_s^j W_{st}^j \in C^{2 \alpha}_2([0,T]; E). 
$$
Algebraic manipulations give
\begin{align*}
\delta u(f)_{st}  & = \delta u_{st} (f_s) + u_s(\delta f_{st}) + \delta u_{st}(\delta f_{st}) \\
 & = B^j_s(f_s) W^j_{st}   + (B_s')^{j,i}(f_s) \mathbb{W}^{i,j}_{st}  + u_{st}^{\natural}(f_s) + u_s(N^j_s) W^j_{st}  + u_s((N_s')^{j,i}) \mathbb{W}^{i,j}_{st} + u_s(f_{st}^{\natural})\\ 
  & + (u_{st}^{\flat} + B_s^j W_{st}^j)( f_{st}^{\flat} + N_s^i W_{st}^i) \\
 &  = \left[ B^j_s(f_s) + u_s(N^j_s) \right] W^j_{st} + \left[  (B_s')^{j,i}(f_s) + u_s((N_s')^{j,i}) + 2  B^j_s(N^i_s) \right] \mathbb{W}_{st}^{i,j} + u(f)^{\natural}_{st}
\end{align*}
where we have used that $\WW$ is geometric and we have defined
\begin{align*}
u(f)^{\natural}_{st} &  := u_s(f_{st}^{\natural}) + u_{st}^{\natural}(f_s)  + B_s^j(f_{st}^{\flat}) W_{st}^j + u_{st}^{\flat}(N_s^j) W_{st}^j 
\end{align*}
It is easy to see that we have $|u(f)^{\natural}_{st}| \lesssim |t-s|^{3 \alpha}$. Since $3 \alpha > 1$, the result follows from the uniqueness of Lemma \ref{sewingLemma}.
\end{proof}

With a definition of the rough integral at hand we can go on to define the notion of a solution to our main equations.

\begin{definition}[\textbf{Backward RPDE solution}]
\label{def:strongForwardSolutionRough copy(2)} Given an $\alpha $-H\"{o}lder
rough path $\mathbf{W}=(W,\mathbb{W})$, $\alpha \in (1/3,1/2]$
we say that $u\in C([0,T],W^{3,p}(\R^d))$ is a regular backward solution
to 
\begin{equation*}
\begin{cases}
-du_{t} & =Lu_{t}dt+\Gamma^i u_{t}d\mathbf{W}^i_{t} \\ 
u_{T} & \in W^{3,p}(\R^d),%
\end{cases}%
\end{equation*}%
provided $(\Gamma^i u, -\Gamma^i \Gamma^j u )$ is controlled by $W$ in $L^p(\R^d)$ 
and if the following holds as equality in $L^{p}(\R^d)$ 
\begin{equation*}
u_{t}=u_{T}+\int_{t}^{T}Lu_{s}ds+\int_{t}^{T}\Gamma^i u_{s}d\mathbf{W}^i_{s}.
\end{equation*}%

We say that $u\in C([0,T],L^{p}(\R^d))$ is an analytically weak solution if $(\Gamma^i u,  - \Gamma^i \Gamma^j u )$ is controlled by $W$ in $W^{-3,p}(\R^d)$ and the following equality holds in $W^{-3,p}(\R^d)$
\begin{equation*}
u_{t}=u_{T}+\int_{t}^{T}Lu_{s}ds+\int_{t}^{T}\Gamma^i u_{s}d\mathbf{W}^i_{s}.
\end{equation*}
Equivalently, for all $\phi \in W^{3,q}(\R^d)$
\begin{equation*}
( u_{t},\phi ) =( u_{T},\phi
) +\int_{t}^{T}( u_{s},L^{\ast }\phi )
ds+\int_{t}^{T}( u_{s},\Gamma ^{i,\ast }\phi ) d%
\mathbf{W}^i_{s}.
\end{equation*}
\end{definition}

\begin{definition}[\textbf{Forward RPDE solution}]
\label{def:strongForwardSolutionRough} Given an $\alpha $-H\"{o}lder rough
path $\mathbf{W}=(W,\mathbb{W})$, $\alpha \in (1/3,1/2]$, we say that $v \in C([0,T]; W^{3,p}(\R^d))$ is a regular forward solution to
\begin{equation*}
\begin{cases}
dv_{t} & =L^{\ast }v_{t}dt+\Gamma ^{i,\ast }v_{t}d\mathbf{W}^i_{t} \\ 
v_{0} & \in W^{3,p}(\R^d),%
\end{cases}%
\end{equation*}%
provided $(\Gamma^i v, \Gamma^i \Gamma^jv )$ is controlled by $W$ in $L^p(\R^d)$ and the following holds as equality in $L^{p}(\R^d)$ 
\begin{equation*}
v_{t}=v_{0}+\int_{0}^{t}L^{\ast }v_{s}ds+\int_{0}^{t}\Gamma ^{i,\ast }v_{s}d%
\mathbf{W}^i_{s}.
\end{equation*}%
Equivalently, for all $\varphi \in L^{q}(\R^d),$%
\begin{equation*}
( v_{t},\varphi ) =( v_{0},\varphi
) +\int_{0}^{t}( L^{\ast }v_{s},\varphi )
ds+\int_{0}^{t}( \Gamma ^{i, \ast }v_{s},\varphi ) d%
\mathbf{W}_{s}^i.
\end{equation*}%

We say that $v\in C([0,T],L^{p}(\R^d))$ is an analytically weak solution if $(\Gamma^i v, \Gamma^i \Gamma^jv)$ is controlled by $W$ in $W^{-3,p}(\R^d)$ and the following equality holds in $W^{-3,p}(\R^d)$
\begin{equation*}
v_{t}=v_{0}+\int_{0}^{t}L^{\ast }v_{s}ds+\int_{0}^{t}\Gamma ^{i,\ast }v_{s}d%
\mathbf{W}^i_{s}.
\end{equation*}%
Equivalently, for all $\phi \in W^{3,q}(\R^d)$ 
\begin{equation*}
( v_{t},\phi ) =( v_{0},\phi
) +\int_{0}^{t}( v_{s},L\phi )
ds+\int_{0}^{t}( v_{s},\Gamma^i \phi ) d\mathbf{W}^i_{s}.
\end{equation*}
\end{definition}

\begin{remark}
Given a backward RPDE solution, driven by $\left( \mathbf{W}_{t}\right) $
with terminal data $u_{T}$, it is easy to see that $v_{t}:=u_{T-t}$ solves a
forward RPDE solution driven by $\left( \mathbf{W}_{T-t}\right) 
$ and initial data $v_{0}=u_{T}$.
\end{remark}

\section{Well-posedness of linear equations}

We denote by $X$ the solution of 
\begin{equation} \label{SDE}
dX_t = \sigma(X_t) dB_t + b(X_t) dt + \beta(X_t) d\WW_t.
\end{equation}
The main objective of this section is to prove that 
\begin{equation} \label{backwardFK}
u(t,x) = E^{(t,x)} \left[ g(X_T) \exp \left\{ \int_t^T c(X_r) dr + \int_t^T \gamma(X_s) d \WW_s \right\} \right]
\end{equation}
is a solution to the backward equation
$$
- du_t = L u_t dt + \Gamma^i u_t d \WW^i_t, \quad u_T = g .
$$

We will show that when $g \in L^p(\R^d)$ (respectively $g \in W^{3,p}(\R^d)$) the above expressions yield a weak solution (respectively regular solution). When $W$ is a smooth path, this is already well known, and we will prove the result by showing that $(\Gamma^i u, - \Gamma^j \Gamma^i u)$ is controlled by $W$ in $W^{-3,p}(\R^d)$ (respectively $L^p(\R^d)$) as well as using the rough path stability of the controlled spaces.

A step towards this goal is to consider the diffusion $X$ as a solution to the rough differential equation 
\begin{equation} \label{roughSDE}
dX_t = b(X_t) dt + V_j(X_t) d \mathbf{Z}_t^j
\end{equation}
where we have defined the $d_B +e$-dimensional rough path $\mathbf{Z}  = (Z, \mathbb{Z})$  where
$$
Z_t = \left(\begin{array}{l}
B_t \\
W_t \\
\end{array} \right)
\hspace{.5cm} and \hspace{.5cm} 
\mathbb{Z}_{st} = \left(\begin{array}{cc}
\mathbb{B}_t & \int_s^t B_{sr} dW_r \\
\int_s^t W_{sr} dB_r & \mathbb{W}_{st} \\
\end{array} \right)
$$
and $V_i = \sigma_i$ for $i=1, \dots, d_B$ and $V_i =  \beta_i$ for $i = d_B+1, \dots d + e$. Above, the term $\int_s^t W_{sr}^i dB^j_r$ is the Wiener integral of the deterministic function $W_{s \cdot}^i$, and we define $\int_s^t B_{sr}^i dW^j_r := B_{st}^i W_{st}^j - \int_s^t W_{sr}^j dB_r^i$. For more details, see \cite{DOR}. We will denote by $\Phi$ the flow generated by \eqref{roughSDE}.

\subsection{Weak solutions}

\begin{theorem} \label{existenceThm}
Assume $\sigma_{i,k}, \beta_{j} \in C_b^3(\R^d)$, $b_j, \gamma_j, c \in C_b^1(\R^d)$. Given $g \in L^{p}(\R^d)$, the Feynman-Kac formula \eqref{backwardFK} yields an analytically weak
backward RPDE solution $u$. 
\end{theorem}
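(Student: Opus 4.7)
The strategy is a two-level smoothing: approximate the driving rough path by smooth paths and (if needed) approximate the initial datum, apply classical Feynman--Kac, then transfer the resulting identity to the rough setting via the controlled-path machinery.

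First I would reduce to the smooth case. Since $\WW$ is geometric there exist smooth paths $W(n)$ with $\WW(n)\to\WW$ in the $\alpha$-H\"older rough path metric; approximate $g\in L^p(\R^d)$ by $g^k\in C_c^\infty(\R^d)$. For every pair $(n,k)$ the Feynman--Kac formula \eqref{backwardFK} (with $W$ replaced by $W(n)$ and $g$ by $g^k$) yields, by classical Kolmogorov/It\^o--Stratonovich arguments applied to the SDE with smooth coefficients in $\WW(n)$, a classical smooth backward solution $u^{n,k}$. In particular the integral identity in Definition \ref{def:strongForwardSolutionRough copy(2)} holds with the rough integral reduced to a Riemann--Stieltjes integral. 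It is convenient to view $X$ as the solution of the enlarged rough SDE \eqref{roughSDE} driven by $\mathbf{Z}$, and to denote by $\Phi^n_{t,s}$ the associated (random) flow, so that
\[
u^{n,k}(t,x)=\E\!\left[g^k(\Phi^n_{t,T}(x))\exp\!\Big\{\int_t^T c(\Phi^n_{t,r}(x))\,dr+\int_t^T\gamma(\Phi^n_{t,r}(x))\,d\WW(n)_r\Big\}\right].
\]

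The core technical step is to show that $\big(\Gamma^i u,-\Gamma^j\Gamma^i u\big)$ is controlled by $W$ in $W^{-3,p}(\R^d)$, with bounds uniform in $n$. Testing against $\phi\in W^{3,q}(\R^d)$, one computes $(u^{n,k}_t-u^{n,k}_s,\Gamma^{i,*}\phi)$ using the representation above and a Taylor expansion of the rough flow $\Phi^n_{s,t}(x)=x+V_j(x)\delta Z^j_{st}+\tfrac12(DV_jV_i)(x)\mathbb{Z}^{ij}_{st}+\text{rem}$, together with a similar expansion of the exponential weight. Collecting terms in $\delta W_{st}^j$ gives the expected Gubinelli derivative $(u^{n,k}_s,-\Gamma^{j,*}\Gamma^{i,*}\phi)$, while the remainder, after integration in $x$, is controlled in the dual norm $|\phi|_{3,q}$ because three spatial derivatives of $\phi$ are available to absorb the differentiations produced by the flow expansion. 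This yields $|(\delta u^{n,k})_{st}(\Gamma^{i,*}\phi)-(\text{leading term})|\lesssim|\phi|_{3,q}|t-s|^{2\alpha}$ uniformly in $n$.

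With the controlled-path structure in hand I define the rough integral $\int_t^T\Gamma^i u_s\,d\WW^i_s$ in $W^{-3,p}(\R^d)$ via the Sewing lemma (Lemma \ref{sewingLemma}) as explained in Remark \ref{linearMappingRemark}. For smooth $W(n)$ this integral coincides with the classical one, so $u^{n,k}$ satisfies the analytically weak equation of Definition \ref{def:strongForwardSolutionRough copy(2)}. I then let $n\to\infty$ with $k$ fixed: It\^o--Lyons continuity of the flow $\Phi^n$ under $\mathbf{Z}(n)\to\mathbf{Z}$ and continuity of the rough integral $\int\gamma(\Phi^n)\,d\WW(n)$ give $u^{n,k}\to u^k$ in $C([0,T];L^p)$, and the uniform controlled bound combined with the rough path continuity of the sewing map passes the rough integral to the limit. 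Finally, I let $k\to\infty$, using the $L^p$-continuity of the Feynman--Kac map (the exponential factor is bounded in every $L^r$ uniformly on compact times, and $X_T$ under $\P^{(t,x)}$ has uniformly controlled density so that $\|u^k-u\|_{L^p}\lesssim\|g^k-g\|_{L^p}$) to deduce the claim for general $g\in L^p(\R^d)$.

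The main obstacle is the controlled-path estimate in $W^{-3,p}$ with uniform constants. The subtle point is to organise the Taylor expansion of $\Phi^n_{t,s}$ together with the exponential functional so that (i) the $\delta W^j_{st}$-term produces exactly $-\Gamma^{j,*}\Gamma^{i,*}\phi$ after dualisation, and (ii) the $|t-s|^{2\alpha}$-remainder, which a priori contains three powers of the vector fields and their derivatives, is bounded in the $W^{3,q}$-dual norm rather than only pointwise. The choice $W^{-3,p}$ is precisely what makes this affordable.
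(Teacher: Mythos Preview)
Your overall architecture---classical Feynman--Kac for smooth approximations, then pass to the limit via rough-path continuity---is the same as the paper's, but two steps are not quite right.

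First, the justification of $\|u^k-u\|_{L^p}\lesssim\|g^k-g\|_{L^p}$ via a ``uniformly controlled density'' of $X_T$ fails: the theorem makes no ellipticity assumption on $L$, so $X_T$ need not have a density at all. The paper obtains this $L^p$-bound by the change of variables $y=\Phi_{t,T}(x)$, using that $J:=\sup_x|\det\nabla\Phi^{-1}_{t,T}(x)|$ has finite expectation (Lemma~\ref{roughLiouvilleLemma}, Proposition~\ref{summary}); one writes $\int|u_t|^p\,dx\le E[J]\,\|g\|_{L^p}^p$ by H\"older and substitution. This works directly for arbitrary $g\in L^p$, so no $g^k$-approximation is needed.

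Second, your controlled-path estimate hides the crucial mechanism. A direct Taylor expansion of $g^k\circ\Phi^n_{\cdot,T}$ puts derivatives on $g^k$, producing bounds that blow up as $k\to\infty$; saying ``three derivatives of $\phi$ are available'' does not explain how they get there. The paper's device is to pair with $\psi:=\Gamma^{i,*}\phi\in W^{2,q}$ and change variables \emph{before} expanding:
\[
(u_t,\psi)=E\Big[\int g(y)\,\psi\big(\Phi_{t,T}^{-1}(y)\big)\det\big(\nabla\Phi_{t,T}^{-1}(y)\big)\,dy\Big],
\]
so that all time-dependence sits inside the smooth test function. Lemma~\ref{LemmaL2Composition} then gives that $t\mapsto\psi(\Phi_{t,T}^{-1})\det(\nabla\Phi_{t,T}^{-1})$ is controlled by $Z=(B,W)$ in $L^q$, with constants proportional to $\|\psi\|_{2,q}\le\|\phi\|_{3,q}$ and \emph{no} derivatives of $g$. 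Taking expectation, the $B_{st}$-coefficient drops out by independence of $B_{st}$ from $\Phi_{t,T}^{-1}$, leaving exactly the $W$-controlled structure with Gubinelli derivative $-(u_s,\Gamma^{i,*}\Gamma^{j,*}\phi)$. With this in hand the smooth-path approximation is used only to check that $u$ satisfies the equation, not to build the controlled structure.
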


\begin{proof}
For simplicity we assume $b = c = \gamma = 0$.

We start by showing that $u$ is a well defined element of $L^p(\R^d)$. Define the random variable $J := \sup_{x} |det( \nabla  \Phi_{t,T}^{-1}(x) ) |$. From Lemma \ref{roughLiouvilleLemma} and Proposition \ref{summary} we know that $E[ J ] < \infty$.
We write
\begin{align*}
\int |u_t(x)|^p dx & = \int \left| E[ g(\Phi_{t,T}(x) )] \right|^p dx  = \int \left| E \left[ \frac{g(\Phi_{t,T}(x) ) J^{1/p}}{J^{1/p}} \right] \right|^p dx \leq E [J] \int  E\left[ \frac{|g(\Phi_{t,T}(x))|^p }{J} \right]  dx\\
 &  = E [J]  E\left[ \int   \frac{|g(\Phi_{t,T}(x))|^p }{J}  dx \right]  =  E [J]  E\left[ \int   \frac{|g(y)|^p }{J}  |det( \nabla  \Phi_{t,T}^{-1}(y) ) |dy \right] \leq E [J]  \int  |g(y)|^p dy ,
\end{align*}
where we have used H\"{o}lder's inequality and $J^{-1}|det( \nabla  \Phi_{t,T}^{-1}(y) ) | \leq 1$. The mapping $g \mapsto u$ is linear and continuous on $L^p(\R^d)$, giving
\begin{equation} \label{InitialLinear}
\sup_{t \in [0,T]} \|u_t \|_{p,0} \leq C \|g\|_{0,p} .
\end{equation}

We now show that $(\Gamma^j u, - \Gamma^j \Gamma^i u)$ is controlled by $W$ in $W^{-3,p}(\R^d)$. Fix $\phi \in W^{3,q}(\R^d)$ and notice that $\Gamma^{j,*}\phi \in W^{2,q}(\R^d)$. By Lemma \ref{LemmaL2Composition} we see that
$$
\int g( \Phi_{\cdot,T}(x)) \Gamma^{j,*} \phi(x) dx = \int g(y) \Gamma^{j,*}\phi( \Phi_{\cdot,T}^{-1}(y) ) det( \nabla \Phi_{\cdot,T}^{-1}(y) ) dy
$$
and
$$
\int g(y) (\Gamma^{i,*} \Gamma^{j,*} \phi)( \Phi_{\cdot,T}^{-1}(y) ) det( \nabla \Phi_{\cdot,T}^{-1}(y) ) dy
$$
is $P$-a.s. controlled by $Z = (B,W)$ in $\R$ and we have the estimate
\begin{align*}
\big\| \big( \int g(y) \Gamma^{j,*}\phi( \Phi_{\cdot,T}^{-1}(y) ) & det( \nabla \Phi_{\cdot,T}^{-1}(y) ) dy , \int g(y) (div(V_i \Gamma^{j,*} \phi)( \Phi_{\cdot,T}^{-1}(y) ) det( \nabla \Phi_{\cdot,T}^{-1}(y) ) dy \big) \big\|_{\alpha, Z; \R} \\
 & \leq C \|g \|_{0,p} \| \phi \|_{3,q} \exp \{ C N_{  [0,T]}(\mathbf{Z}) \} (1 + \|\mathbf{Z} \|_{\alpha})^k .
\end{align*}
Written explicitly, we have

\begin{align*}
\big| \delta \big( \int g(y) \Gamma^{j,*}\phi( \Phi_{\cdot,T}^{-1}(y) ) & det( \nabla \Phi_{\cdot,T}^{-1}(y) ) dy \big)_{st}   - \int g(y) ( div( \sigma_i \Gamma^{j,*} \phi)( \Phi_{t,T}^{-1}(y) ) det( \nabla \Phi_{t,T}^{-1}(y) ) dy B_{st}^i  \\
  & - \int g(y) ( div( \beta_i \Gamma^{j,*} \phi)( \Phi_{t,T}^{-1}(y) ) det( \nabla \Phi_{t,T}^{-1}(y) ) dy W_{st}^i \big| \\
  & \leq C \|g \|_{0,p} \| \phi \|_{3,q} \exp \{ C N_{  [0,T]}(\mathbf{Z}) \} (1 + \|\mathbf{Z} \|_{\alpha})^k |t-s|^{2 \alpha}.
\end{align*}
Using the above, independence of Brownian increments and the fact that $\Gamma^{i,*} \psi = - div( \beta_i \psi)$ we get

\begin{align*}
\big| \delta \big( E\big[ \int g(y)  & \Gamma^{j,*}\phi( \Phi_{\cdot,T}^{-1}(y) ) det( \nabla \Phi_{\cdot,T}^{-1}(y) )  dy \big] \big)_{st} +  \int g(y) E\left[( \Gamma^{i,*} \Gamma^{j,*} \phi)( \Phi_{t,T}^{-1}(y) ) det( \nabla \Phi_{\cdot,T}^{-1}(y) ) \right]dy W_{st}^i \big|  \\
 &  \leq C \|g \|_{0,p} \| \phi \|_{3,q} E \big[ \exp \{ C N_{  [0,T]}(\mathbf{Z}) \} (1 + \|\mathbf{Z} \|_{\alpha})^k \big] |t-s|^{2 \alpha},
\end{align*}
 which proves that $(\Gamma^i u,  - \Gamma^{j} \Gamma^{i}u)$ is controlled by $W$ in $W^{-3,p}(\R^d)$.

\bigskip
 
To see that $u$ is an analytically weak solution we argue by rough path continuity. In fact, for $\WW$ smooth it is well known that $u$ is an analytically weak solution of 
$$
- \partial_t u = L u_t + \Gamma^j u_t \dot{W}_t .
$$
The continuity $\WW \mapsto u$ from $\mathscr{C}^{\alpha}_g$ to $W^{-3,p}(\R^d)$ equipped with the weak*- topology is clear; by density it suffices to take $\phi \in C^{\infty}_c(\R^d)$ and show the continuity $\WW \mapsto (u_t, \phi)$. The latter is equal to
\begin{align*}
\int E[  g( \Phi_{t,T}(x)) ] \phi(x) dx &  = E \left[ \int g( \Phi_{t,T}(x))  \phi(x) dx \right] =  \int g(y)  E \left[  \phi( \Phi_{t,T}^{-1}(y) ) det( \nabla \Phi_{t,T}^{-1}(y) ) \right] dy	 
\end{align*}
which is continuous w.r.t. $\WW$.
We can then take the limit in every term in the equation, using rough path stability to see that $u$ indeed satisfies the equation.
\end{proof}

\subsection{Regular solutions}

\begin{theorem} \label{existenceThmRegular}
Assume $\sigma_{i,k}, \beta_{j}, \gamma_j \in C_b^6(\R^d)$, $b_j, c \in C_b^4(\R^d)$. Given $g \in W^{3,p}(\R^d)$, the Feynman-Kac formula \eqref{backwardFK} yields a regular backward RPDE solution.

\end{theorem}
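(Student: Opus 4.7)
The plan is to replicate the argument of Theorem \ref{existenceThm} one step higher on the Sobolev scale: work directly in $L^p(\R^d)$ (rather than dualise against $W^{3,q}$ test functions) for both the pointwise norm of $u_t$ and for the controlled path structure of $(\Gamma^i u,-\Gamma^j\Gamma^i u)$. The stronger regularity assumptions on the coefficients are precisely what is needed to differentiate the hybrid rough/It\^o flow $\Phi_{t,T}$ from \eqref{roughSDE} three times in $x$ with the right moment bounds.

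First I would establish the a priori bound $\sup_{t\in[0,T]}\|u_t\|_{3,p}\leq C\|g\|_{3,p}$. Differentiating \eqref{backwardFK} under the expectation and using the chain rule/Fa\`a di Bruno, for any multi-index $|\alpha|\leq 3$ one has
\begin{equation*}
\partial^\alpha u(t,x)=\E\Bigl[\sum_{|\beta|\leq|\alpha|}(\partial^\beta g)(\Phi_{t,T}(x))\,R_{\alpha,\beta}(t,x)\Bigr],
\end{equation*}
where each $R_{\alpha,\beta}$ is a polynomial in spatial derivatives of $\Phi_{t,T}$ and of the weight $E_t:=\exp\bigl(\int_t^T c(X_r)\,dr+\int_t^T\gamma(X_r)\,d\mathbf{W}_r\bigr)$, of total order at most $3$. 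Under $\sigma,\beta,\gamma\in C_b^6$ and $b,c\in C_b^4$, the estimates summarised in Proposition \ref{summary} give moment bounds on $R_{\alpha,\beta}$ of the form $\exp(CN_{[0,T]}(\mathbf{Z}))(1+\|\mathbf{Z}\|_\alpha)^k$. Applying the same H\"older/Jacobian trick as in the $L^p$-bound \eqref{InitialLinear} of Theorem \ref{existenceThm} term by term then yields $\|\partial^\alpha u_t\|_{0,p}\lesssim \|\partial^\beta g\|_{0,p}$, and summing over $|\alpha|\leq 3$ gives the claim.

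Second, I would show that $(\Gamma^i u,-\Gamma^j\Gamma^i u)$ is controlled by $W$ in $L^p(\R^d)$. The idea is to view the random field $x\mapsto g(\Phi_{t,T}(x))E_t$ as, pathwise, controlled by $Z=(B,W)$ in $L^p(\R^d)$, with Gubinelli derivative read off directly from the rough equation \eqref{roughSDE}; this is the $L^p$-analogue of the dual computation used in Theorem \ref{existenceThm}, and the requisite $L^p$-remainder bounds come from step one applied to $\Gamma^{i,\ast}g$ and $\Gamma^{j,\ast}\Gamma^{i,\ast}g$, which lie in $W^{1,p}$ and $L^p$ respectively thanks to the coefficient regularity. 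Taking the expectation kills the Brownian increments and leaves a genuinely $W$-controlled object in $L^p(\R^d)$, whose Gubinelli derivative is identified (via the chain rule applied pointwise and using $V_i=\beta_i$ for the rough directions) with $-\Gamma^j\Gamma^i u$.

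Finally, to verify the equation as an identity in $L^p(\R^d)$, I would approximate $\mathbf{W}$ by smooth geometric rough paths $\mathbf{W}(n)$; for each $n$ classical Feynman--Kac ensures that $u^n$ is a classical, hence regular, solution. The controlled path bounds of step two, together with the rough-path continuity of the integral $\int\Gamma^i u\,d\mathbf{W}^i$ and uniqueness in the sewing lemma \ref{sewingLemma}, allow passage to the limit of each term in $L^p(\R^d)$. The main obstacle is step one: converting the pathwise $C^3_x$-regularity of the hybrid rough/It\^o flow $\Phi_{t,T}$ (\cite{DOR}) into $L^p(\R^d)$-bounds that are uniform in $(t,\omega)$ with $\omega$-moments finite. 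This requires combining the Jacobian estimates of Lemma \ref{roughLiouvilleLemma} with higher-order moment bounds on $\nabla^k\Phi_{t,T}$ for $k\leq 3$, which is exactly the point where the $C_b^6$/$C_b^4$ assumptions are needed.
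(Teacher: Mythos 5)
Your plan follows the paper's own route: (1) differentiate the Feynman–Kac expression under the expectation and iterate the H\"older/Jacobian trick from Theorem \ref{existenceThm} to get $\sup_t\|u_t\|_{3,p}\lesssim\|g\|_{3,p}$, with the extra $\nabla^k\Phi_{t,T}$ factors handled by Proposition \ref{summary2} and Lemma \ref{roughLiouvilleLemma}; (2) exhibit the controlled-path structure of $\Gamma^i u$ in $L^p$ pathwise in $\omega$, then take expectations so that the $B$-increments drop out and one is left with a $W$-controlled object with Gubinelli derivative $-\Gamma^j\Gamma^i u$; (3) identify the equation by rough-path continuity against smooth approximations of $\mathbf{W}$. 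This is the same decomposition the paper uses.

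One slip in your second step is worth flagging. For the \emph{regular} solution you are acting with $\Gamma$ directly on $u$, and the object whose controlled structure you need is $\nabla u_t = E[\nabla g(\Phi_{t,T})\nabla\Phi_{t,T}]$: the composition $\nabla g\circ\Phi_{t,T}$ is controlled in $L^p$ via Lemma \ref{L2Composition} because $\nabla g\in W^{2,p}$, and $\nabla\Phi_{t,T}$ acts as an $L^\infty$-controlled multiplier (Lemma \ref{L2Multiplier}, using the $C_b^6$ regularity). The adjoint operators $\Gamma^{i,\ast}g$, $\Gamma^{j,\ast}\Gamma^{i,\ast}g$ you invoke belong to the dual change-of-variables computation in Theorem \ref{existenceThm} (where one tests a weak solution against $\phi$); they do not appear here. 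Your Sobolev indices are also shifted by one: for $g\in W^{3,p}$, $\Gamma^i g\in W^{2,p}$ and $\Gamma^j\Gamma^i g\in W^{1,p}$, and what Lemma \ref{L2Composition} really needs is $W^{2,p}$ for the composed function, i.e. exactly one derivative more than you wrote. These are imprecisions rather than gaps, since the mechanism you describe is the right one; just replace $\Gamma^\ast$ by $\Gamma$, track the Sobolev level carefully, and cite the composition/multiplier lemmas in the appendix rather than re-deriving them.
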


\begin{proof}
Assume for simplicity that $\gamma_j = b_j= c =0$. We start by showing that $u$ is a well defined element of $W^{3,p}(\R^d)$. To see that $u \in W^{1,p}(\R^d)$ write $\nabla u_t(x) = E[\nabla g( \Phi_{t,T}(x)) \nabla  \Phi_{t,T}(x)]$. With the notation of the proof of Theorem \ref{existenceThm} we write
\begin{align*}
\int_{\R^d} |\nabla u_t(x) |^p dx & = \int_{\R^d} \left| E \left[ \frac{\nabla g( \Phi_{t,T}(x)) \nabla  \Phi_{t,T}(x) J^{1/p}}{J^{1/p}}  \right] \right|^p  dx \\ 
 &  \leq \sup_{x \in \R^d} E[ |\nabla  \Phi_{t,T}(x)|^p  J]  E \left[ \int_{\R^d} \frac{|\nabla g( \Phi_{t,T}(x)) |^p }{J} dx \right] .
\end{align*}
The latter factor can be bounded in the same way as in the proof of Theorem \ref{existenceThm}. From Proposition \ref{summary2} and Lemma \ref{roughLiouvilleLemma} the first factor is also bounded. 

By the assumptions on  $\sigma_{i,k}, \beta_{j}, \gamma_j,b_j, c $ the flow map $x \mapsto \Phi_{t,T}(x)$ is $C^3_b(\R^d)$, and by iterating the above we can show that $u_t \in W^{3,p}(\R^d)$. Details are left to the reader.

To see that $u$ is controlled by $W$ in $L^p(\R^d)$ we notice that we have
$$
(\Phi_{\cdot,T},  - \nabla \Phi_{\cdot,T} V ) \in \mathscr{D}_{Z}^{2 \alpha}([0,T]; L^{\infty}(\R^d))
$$
and
$$
(\nabla \Phi_{\cdot,T},  - \nabla^2 \Phi_{\cdot,T} V - \nabla \Phi_{\cdot,T} \nabla V  ) \in \mathscr{D}_{Z}^{2 \alpha}([0,T]; L^{\infty}(\R^d))
$$
see \cite[Lemma 32]{DFS}. Moreover, 
$$
\nabla u_t(x) = E[ \nabla g(\Phi_{t,T}(x)) \nabla \Phi_{t,T}(x) ]
$$
so that by Lemma \ref{L2Multiplier} we have
$$
(\nabla g(\Phi_{\cdot,T}) \nabla \Phi_{\cdot,T}, - \nabla\left[  \nabla g(\Phi_{\cdot,T}) \nabla \Phi_{\cdot,T} V \right]   ) \in \mathscr{D}_{Z}^{2 \alpha}([0,T]; L^{p}(\R^d)) .
$$
Written explicitly,
\begin{align*}
\big\| \delta \left( \nabla g(\Phi_{\cdot,T}) \nabla \Phi_{\cdot,T} \right)_{st}  & + \nabla\left[  \nabla g(\Phi_{s,T}) \nabla \Phi_{s,T} V_j \right] Z_{st}^j \big\|_{L^p} \\
 &   \leq C \exp\{ C N_{[0,T]}(\ZZ) \}(1 + \| \ZZ \|_{\alpha})^k \|g\|_{3,p} |t-s|^{2 \alpha}.
\end{align*}
Integrated against $ \beta_i \phi \in L^q(\R^d)$ gives
\begin{align*}
\big| \int  \beta_i (x) \delta \left( \nabla g(\Phi_{\cdot,T})(x) \nabla \Phi_{\cdot,T}(x) \right)_{st} \phi(x) dx   & + \int \beta_i (x) \nabla\left[  \nabla g(\Phi_{s,T}(x)) \nabla \Phi_{s,T}(x) \sigma_j(x) \right] B_{st}^j  \\
 & + \int \beta_i (x) \nabla\left[  \nabla g(\Phi_{s,T}(x)) \nabla \Phi_{s,T}(x) \beta_j(x) \right] W^j_{st} \big| \\
 &  \leq C \exp\{ C N_{[0,T]}(\ZZ) \} (1 + \| \ZZ \|_{\alpha})^k \|g\|_{p,3} \| \phi \|_{0,q} |t-s|^{2 \alpha} .
\end{align*}

Using the above, independence of Brownian increments and the fact that $\Gamma^i\psi = \beta_i \nabla \psi$ we get
\begin{align*}
\big| \int \delta (\Gamma^i u)_{st}(x) \phi(x) dx   &  + \int \Gamma^j ( \Gamma^i u_s)(x) \phi(x) dx  W^j_{st} \big| \\
 &  \leq C E\left[  \exp\{ C N_{[0,T]}(\ZZ) \} (1 + \| \ZZ \|_{\alpha})^k  \right] \|g\|_{3,p} \| \phi \|_{0,q} |t-s|^{2 \alpha} .
\end{align*}
The proof that $u$ in fact satisfies the equation is similar as in the proof of Theorem \ref{existenceThm}.
\end{proof}

We end this section with an ad-hoc result that will be needed to use Lemma \ref{prodFormula}. 
\begin{proposition} \label{existenceSpaceSmooth}
Assume $\sigma_{i,k}, \beta_{j}, \gamma_j \in C_b^6(\R^d)$, $b_j, c \in C_b^4(R^d)$. Given $g \in W^{6,p}(\R^d)$, the Feynman-Kac formula \eqref{backwardFK} yields a regular backward RPDE solution with values in $W^{3,p}(\R^d)$, the rough integral is an element of $W^{3,p}(\R^d)$ and the solution is controlled by $W$ in $W^{3,p}(\R^d)$.
\end{proposition}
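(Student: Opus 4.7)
The plan is to bootstrap the arguments of Theorems \ref{existenceThm} and \ref{existenceThmRegular} at a higher level of regularity. Since the coefficient hypotheses match those of Theorem \ref{existenceThmRegular} while $g$ is three derivatives smoother, $u_t$ should gain three derivatives as well, giving $u_t \in W^{6,p}(\R^d)$; then both $\Gamma^i u \in W^{5,p}$ and $\Gamma^j \Gamma^i u \in W^{4,p}$ embed continuously into $W^{3,p}$. The substantive task is to upgrade the controlled-path expansions of $(u, -\Gamma^i u)$ and $(\Gamma^i u, -\Gamma^j\Gamma^i u)$ from $L^p$ (as obtained in Theorem \ref{existenceThmRegular}) to $W^{3,p}$, after which the sewing Lemma \ref{sewingLemma} automatically places the rough integral in $W^{3,p}$.

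For the Sobolev regularity of $u_t$, I would differentiate the Feynman--Kac formula \eqref{backwardFK} up to six times in $x$. By Fa\`a di Bruno, $\nabla^k u(t,x)$ is a finite sum of expectations of terms $\nabla^j g(\Phi_{t,T}(x)) \cdot \prod_i \nabla^{l_i}\Phi_{t,T}(x)\cdot D$ with $j + \sum(l_i - 1) \le k$, where $D$ collects derivatives of the exponential weight $\exp\{\int_t^T c(X_r)\,dr + \int_t^T \gamma(X_s)\,d\WW_s\}$. The $C_b^6$- and $C_b^4$-hypotheses on the coefficients propagate through the rough flow: Proposition \ref{summary2} together with Lemma \ref{roughLiouvilleLemma} yield uniform $L^\infty$-moment bounds on $\nabla^l \Phi_{t,T}$ for $l\le 6$, with analogous control on $D$. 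Coupling this with the Jacobian/change-of-variables device of Theorem \ref{existenceThm} (via $J = \sup_x|\det \nabla\Phi_{t,T}^{-1}(x)|$) and H\"older's inequality then gives $\|u_t\|_{6,p} \lesssim \|g\|_{6,p}$.

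For the controlled structure at the $W^{3,p}$-level, I would repeat the estimate of Theorem \ref{existenceThmRegular} while also taking up to three spatial derivatives throughout. By iterating \cite[Lemma 32]{DFS}, the maps $(\nabla^l \Phi_{\cdot,T}, -\nabla^{l+1}\Phi_{\cdot,T}V - \nabla^l\Phi_{\cdot,T}\nabla V)$ are controlled by $Z=(B,W)$ in $L^\infty(\R^d)$ for $l\le 5$. Inserting these into the Fa\`a di Bruno expansion of $\partial^\alpha(\nabla g(\Phi_{\cdot,T})\nabla\Phi_{\cdot,T})$ for $|\alpha|\le 3$, applying Lemma \ref{L2Multiplier}, and multiplying by $\beta_i\in C_b^6$ gives controlled-path bounds in $L^p$ for $\partial^\alpha(\Gamma^i u)$, with Gubinelli derivative involving $-\partial^\alpha(\Gamma^j\Gamma^i u)$. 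Taking expectations, the $B$-part drops out by independence of Brownian increments exactly as in Theorem \ref{existenceThmRegular}, yielding
$$
\big\|\delta(\Gamma^i u)_{st} + \Gamma^j\Gamma^i u_s\, W_{st}^j\big\|_{3,p} \lesssim |t-s|^{2\alpha}, \qquad \big\|\delta(\Gamma^j\Gamma^i u)_{st}\big\|_{3,p} \lesssim |t-s|^{\alpha}.
$$
A strictly cheaper variant of the same argument, with one fewer derivative applied to $g$, shows that $(u, -\Gamma^i u)$ is also controlled by $W$ in $W^{3,p}$. Since $W^{3,p}$ is a Banach space, Lemma \ref{sewingLemma} applied to $(\Gamma^i u, -\Gamma^j\Gamma^i u)$ then produces $\int_t^T \Gamma^i u_s\,d\WW^i_s$ inside $W^{3,p}$ with the usual quantitative estimate, and the equation established in Theorem \ref{existenceThmRegular} lifts from $L^p$ to $W^{3,p}$.

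The main obstacle is the combinatorial bookkeeping in the last two steps: one must track the H\"older exponents and the polynomial-in-$\|\mathbf{Z}\|_{\alpha}$ factors carefully as they propagate through the Fa\`a di Bruno expansions and the multi-derivative versions of \cite[Lemma 32]{DFS}, and then verify integrability of the resulting products via H\"older's inequality against the moment bounds on $\nabla^l\Phi_{t,T}$. All structural inputs---higher moment bounds for rough flow derivatives, the Jacobian identity, independence of $B$ from the rough driver, and continuity of the rough integral on controlled paths---are already in place from Theorems \ref{existenceThm} and \ref{existenceThmRegular} and their supporting lemmas, so no new rough-path input is needed.
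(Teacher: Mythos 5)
Your overall skeleton is the one the paper uses: the paper's proof of Proposition \ref{existenceSpaceSmooth} is literally ``same argument as Theorem \ref{existenceThmRegular}, coupled with Lemma \ref{L2CompositionH3}'', i.e.\ the composition/controlled-path estimate is upgraded from $L^p$ to $W^{3,p}$, and then the independence-of-Brownian-increments step and the sewing lemma are repeated verbatim. What you propose amounts to re-deriving the content of Lemma \ref{L2CompositionH3} by hand via Fa\`a di Bruno, which is legitimate in principle.

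However, there is a concrete gap in how you pay for the extra regularity. You claim $u_t\in W^{6,p}(\R^d)$, uniform bounds on $\nabla^l\Phi_{t,T}$ for $l\le 6$, and (by ``iterating'' \cite[Lemma 32]{DFS}) controlled structure of $\nabla^l\Phi_{\cdot,T}$ for $l\le 5$. None of this follows from $\sigma,\beta\in C_b^6$ in the rough regime $\alpha\in(1/3,1/2]$: the available results (Proposition \ref{summary2}, \cite[Lemma 32]{DFS}, and the standard Lip$^\gamma$ flow theory behind them) deliver a flow of class $C^3_b$ only -- exactly what the paper asserts in Theorem \ref{existenceThmRegular} under the same coefficient hypotheses -- and controlled structure for $\Phi$ and its first few derivatives, not up to order five or six. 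So the steps resting on $\nabla^l\Phi$ with $l\ge 4$--$6$ would fail as stated. The point you are missing is where the three extra derivatives of $g$ actually go: they are not converted into extra smoothness of $u$ or of the flow, but are consumed by the composition estimate itself. Lemma \ref{L2CompositionH3} needs the composed function in $W^{5,p}$ while only requiring $(Y,Y')$ controlled in $C^3_b$; since the relevant composition is with $\nabla g$, this forces $g\in W^{6,p}$, and it is this trade-off (more derivatives on $g$, same $C^3_b$-controlled flow) that produces the $W^{3,p}$-controlled structure of $(\Gamma^i u,-\Gamma^i\Gamma^j u)$ and places the rough integral in $W^{3,p}$. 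Your argument should therefore be reorganized around that composition lemma (or an equivalent hand-made version using only $C^3_b$-controlled flow data), rather than around higher-order flow derivatives, and the claim $u_t\in W^{6,p}$ should be dropped, as it is neither needed for the statement nor supported by the hypotheses.
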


\begin{proof}
The proof is similar to the proof of Theorem \ref{existenceThmRegular}, we only need to check that $(\Gamma^iu, - \Gamma^i \Gamma^j u)$ is controlled by $W$ in $W^{3,p}(\R^d)$. The result follows from the same argument as in Theorem \ref{existenceThmRegular} coupled with Lemma \ref{L2CompositionH3}.
\end{proof}

\subsection{Uniqueness}

In this section we prove the uniqueness of the solutions for a certain class of coefficients. The proof is based on a duality trick; existence of the solution to the backward problem plus existence of a solution to the forward problem couple with the product formula, Lemma \ref{prodFormula}, gives uniqueness via a standard trick. 

Given $g=u_{T}\in L^{p}(\R^d)$, we show uniqueness of weak solutions to
the backward RPDE 
\begin{equation*}
( u_{t},\phi ) =( g,\phi
) +\int_{t}^{T}( u_{r},L^{\ast }\phi )
dr+\int_{t}^{T}\,( u_{r},\Gamma ^{j,\ast }\phi ) d\mathbf{W}^j_{r},\qquad 0\leq t\leq T.
\end{equation*}
for $\phi \in W^{3,q}(\R^d)$.

\begin{theorem} \label{Uniqueness}
Assume $\sigma_{i,k}, \beta_{j}, \gamma_j \in C_b^6(\R^d)$, $b_j, c \in C_b^4(\R^d)$. Given $u_{T}\in L^{p}(\R^d)$, there exists a unique analytically weak backward RPDE
solution $u$. A similar result holds in the forward case.
\end{theorem}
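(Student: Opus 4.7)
I would prove uniqueness by a duality argument using the product formula of Lemma \ref{prodFormula}. Let $u^1, u^2$ be two analytically weak backward solutions with common terminal datum $u_T \in L^p$, and set $w := u^1 - u^2$, a weak backward solution with $w_T = 0$. To conclude $w \equiv 0$, it suffices to prove $(w_t, \phi) = 0$ for every $t \in [0, T]$ and every $\phi$ in a dense subset of $L^q$.

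For such a $\phi$, taken sufficiently smooth (say $\phi \in W^{N,q}$ with $N$ large), I would use the time-reversal remark following Definition \ref{def:strongForwardSolutionRough} together with Proposition \ref{existenceSpaceSmooth} (iterated, if necessary, for additional Sobolev regularity) to construct a regular forward RPDE solution $v : [t, T] \to W^{3, q}$ with $v_t = \phi$, controlled by $W$ in $W^{3, q}$. Then apply Lemma \ref{prodFormula} to the pair $(w, v)$ on $[t, T]$ with $E = W^{3, q}$ and $E^* = W^{-3, p}$. Viewed as a $W^{-3, p}$-valued rough evolution, $w$ has drift $A = -Lw \in W^{-2, p} \subset E^*$ and controlled rough integrand $(B^j, (B')^{j,i}) = (-\Gamma^j w, \Gamma^j \Gamma^i w)$ in $E^*$, while $v$, viewed as a $W^{3, q}$-valued evolution, has drift $K = L^* v$ and controlled rough integrand $(N^j, (N')^{j,i}) = (\Gamma^{j,*} v, \Gamma^{j,*} \Gamma^{i,*} v)$ in $E$.

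The adjoint relations $(Lw, v) = (w, L^* v)$ and $(\Gamma^j w, v) = (w, \Gamma^{j,*} v)$ immediately give the drift cancellation $A(v) + w(K) = 0$ and the first-order cancellation $M^j = B^j(v) + w(N^j) = 0$. A parallel computation of the second-order coefficient $(M')^{j,i}$ in Lemma \ref{prodFormula}, using the same adjoints together with the geometric identity $W^i W^j = \W^{i,j} + \W^{j,i}$ (which governs how the cross term $B^j(N^i) W^j W^i$ arising from $\delta u \cdot \delta v$ distributes over the two level-two components), shows that $(M')^{j,i}$ vanishes identically as well. Lemma \ref{prodFormula} therefore reduces to $(w_T, v_T) = (w_t, v_t)$. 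Since $w_T = 0$, we conclude $(w_t, \phi) = 0$ for every $\phi$ in the dense subset, hence $w_t = 0$ in $L^p$ for each $t \in [0, T]$. The forward case is handled by the analogous argument: given two weak forward solutions with common initial datum, their difference is paired against a regular backward test solution obtained from Theorem \ref{existenceThmRegular}.

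The main technical obstacle is the regularity bookkeeping required to place all data entering Lemma \ref{prodFormula} in the chosen Banach space $E = W^{3, q}$. In particular, the drift $L^* v$ of the test solution must lie in $E$, which forces $v \in W^{5, q}$ and thus $\phi \in W^{N, q}$ for some $N$ strictly larger than what Proposition \ref{existenceSpaceSmooth} directly requires; this is achieved by a routine iteration of the regularity arguments in the proof of that proposition for more regular initial data. With that in place, the cancellations above are purely algebraic consequences of the defining properties of the formal adjoints $L^*$ and $\Gamma^{j,*}$, and no further rough path input is needed.
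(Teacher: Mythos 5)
Your proposal is correct and follows essentially the same route as the paper: testing the difference of two weak backward solutions against a regular forward solution built from Proposition \ref{existenceSpaceSmooth}, applying the product formula of Lemma \ref{prodFormula}, and observing that the drift and both levels of the rough integrand cancel by adjointness of $L^*$ and $\Gamma^{j,*}$ (the paper handles the regularity bookkeeping you flag simply by taking the test datum in $W^{6,q}(\R^d)$). The only cosmetic difference is that the cancellation of $(M')^{j,i}$ needs nothing beyond adjointness --- geometricity of $\WW$ is already consumed inside Lemma \ref{prodFormula} in producing the cross term $2(B^j,N^i)$ --- and that the paper reduces by linearity to a single solution with zero terminal data rather than working with the difference.
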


\begin{proof}
Since the equation is linear, it is enough to prove that the only solution to 
$$
-du_t = Lu_t dt + \Gamma^{j} u_t d\WW^j_t \hspace{.5cm} u_T = 0 ,
$$
is the trivial solution $u = 0$. For simplicity we show that $u_0 = 0$.

For $\phi \in W^{6,q}(\R^d)$, denote by $v$ the regular forward solution to  
$$
dv_t = L^*v_t dt + \Gamma^{j,*} v_t d\WW^j_t \hspace{.5cm} v_0 = \varphi ,
$$
as constructed in Proposition \ref{existenceSpaceSmooth}. It is clear that $u$ is controlled by $W$ in $W^{-2,p}(\R^d)$, and consequently also in $W^{-3,p}(\R^d)$.

From Lemma \ref{prodFormula} we get that 
\begin{align*}
u_T(v_T) &  = u_0(\varphi) + \int_0^T    (u_t , L^*v_t)  - (Lu_t, v_t) dt + \int_0^T (M_r, M_r') d\WW_r 
\end{align*}
where 
$$
M_t^j = - (\Gamma^j u_t, v_t) + (u_t, \Gamma^{j,*} v_t)  = 0 \hspace{0.5cm} (M'_t)^{j,i} = (\Gamma^j \Gamma^i u_t, v_t) - 2(\Gamma^i u_t, \Gamma^{j,*} v_t) + (u_t, \Gamma^{i,*}\Gamma^{j,*} v_t) = 0.
$$
The bounded variation term is obviously equal to $0$, which gives
$$
0 = u_0(\varphi)
$$
for all $\varphi \in W^{6,q}(\R^d)$ which implies that $u_0 = 0$ in $L^p(\R^d)$.
\end{proof}

\section{Energy estimates}
In this section we use a new method, first introduced in \cite{HNS}, to find energy estimates. The method relies on finding a set of suitable space-time test functions that equilibrate the energy of the noise in the system. We assume $u$ is a weak solution to the forward equation 
$$
du_t = Lu_t + \Gamma^i u_t d\WW^i , \quad u_0 = g \in L^2(\R^d) .
$$

The main result of this section is the following.

\begin{theorem} \label{thmEnergyEstimate}
Suppose $\sigma_{i,k}, \gamma_j, \beta_j^n \in C^6_b(\R^d)$ and $b_j, c \in C_b^4(\R^d)$. Moreover, assume the following non degeneracy condition
\begin{equation} \label{diffusionNonDegeneracy}
\lambda | \xi |^2 \leq  \sigma_{i,k} \sigma_{j,k}  \xi_j \xi_i
\end{equation}
for some constant $\lambda > 0$.

Then $u \in C([0,T]; L^2(\R^d)) \cap L^2([0,T]; H^1)$, and the following energy inequality holds
\begin{equation} \label{energyEstimate}
\sup_{t \in [0,T]}  \|u_t\|_{0}^2 + \int_0^T \| \nabla u_r \|_{0}^2 dr \leq C \|g\|_{0}^2 ,
\end{equation}
where $C$ can be chosen uniformly in bounded sets of $\WW$ and depends on $\lambda$ as well as $\sigma_{i,k}, \gamma_j, \beta_j^n \in C^6_b(\R^d)$ and $b_j, c \in C_b^4(\R^d)$.
\end{theorem}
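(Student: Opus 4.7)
The plan, following \cite{HNS}, is to control the $L^2$-energy of $u$ via a weighted pairing $(u_t^2, f_t)$ whose weight $f = f_t(x)$ is engineered to cancel the rough-driver contribution. Specifically, we seek a strictly positive $f$, solving an appropriate backward rough PDE, such that applying the product formula of Lemma \ref{prodFormula} yields
\[
d(u_t^2, f_t) = -(|\sigma \nabla u_t|^2, f_t)\, dt + R(u_t, f_t)\, dt,
\]
with no rough-path term and a lower-order remainder $R$ pointwise dominated by $C \|f\|_\infty \|u\|_0^2$. Combined with two-sided deterministic bounds on $f$ and the non-degeneracy \eqref{diffusionNonDegeneracy}, this will yield the energy inequality after Gronwall.

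\textbf{Step 1: construction of $f$.} The conditions $M^j = M'^{j,i} = 0$ dictated by Lemma \ref{prodFormula}, once the controlled-path data of $u^2$ are identified, force $f$ to solve a backward rough PDE
\[
-df_t = L^\dagger f_t\, dt + (\Gamma^i)^\dagger f_t\, d\mathbf{W}^i_t, \qquad f_T \equiv 1,
\]
where $L^\dagger$ is a second-order operator sharing the leading symbol $\sigma\sigma^T$ of $L$ and $(\Gamma^i)^\dagger$ is a first-order operator whose principal part is $-\Gamma^{i,*}$; in the model case $b=c=\gamma=0$ these reduce to $L^\dagger = L^*$ and $(\Gamma^i)^\dagger = \Gamma^{i,*}$. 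Existence of $f$ with the requisite controlled-path structure follows from Proposition \ref{existenceSpaceSmooth}, and the Feynman-Kac representation \eqref{backwardFK} exhibits $f_t(x)$ as the expectation of an exponential whose exponent is bounded uniformly in the starting point $x$. Applying Jensen's inequality in both directions (using convexity of $\exp$) then produces deterministic two-sided bounds $0 < c_1(\mathbf{W}) \leq f_t(x) \leq c_2(\mathbf{W}) < \infty$, locally uniform on bounded sets of $\mathscr{C}_g^\alpha([0,T])$.

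\textbf{Step 2: cancellation and conclusion.} Assume first that $g \in W^{3,2}(\R^d)$, so $u$ is a regular forward solution (Theorem \ref{existenceThmRegular}). Expanding $\delta(u^2)_{st} = 2u_s\, \delta u_{st} + (\delta u_{st})^2$ and invoking the geometric identity $W^i_{st} W^j_{st} = \mathbb{W}^{i,j}_{st} + \mathbb{W}^{j,i}_{st}$, one reads off that $u^2$ is controlled in $L^1(\R^d)$ with Gubinelli derivative $B^j = 2u\, \Gamma^j u$ and second-order coefficient $(B')^{j,i} = 2u\, \Gamma^i \Gamma^j u + 2\, \Gamma^j u\, \Gamma^i u$. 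Applying Lemma \ref{prodFormula} with $E = L^\infty(\R^d)$, a direct integration-by-parts calculation then confirms $M^j = 0$ and $M'^{j,i} = 0$. Rewriting $2uLu = L(u^2) - |\sigma\nabla u|^2 + cu^2$ and pairing with $f$ through the adjoint produces the claimed identity. Using the bounds on $f$ together with \eqref{diffusionNonDegeneracy},
\[
c_1 \|u_t\|_0^2 + c_1 \lambda \int_0^t \|\nabla u_r\|_0^2\, dr \leq c_2 \|g\|_0^2 + C \int_0^t \|u_r\|_0^2\, dr,
\]
and Gronwall yields \eqref{energyEstimate}. The case of general $g \in L^2$ is recovered by approximating $g_n \in W^{3,2} \to g$, using the uniform bound and Theorem \ref{Uniqueness} to pass to the limit; continuity $u \in C([0,T]; L^2)$ follows from strong continuity of $t \mapsto (u_t^2, f_t)$, ensured by the identity itself, combined with weak continuity of $u_t$ in $L^2$.

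The main obstacle is the algebraic cancellation $M'^{j,i} = 0$: one must correctly identify the second-order Gubinelli derivative of $u^2$ -- in particular the extra contribution $2\Gamma^j u\, \Gamma^i u$ produced by the symmetrisation $W^i W^j = \mathbb{W}^{i,j} + \mathbb{W}^{j,i}$ available for geometric rough paths -- and verify via repeated integration by parts that the three terms in $M'^{j,i}$ annihilate exactly. The presence of non-trivial $b, c, \gamma$ modifies the operators $L^\dagger, (\Gamma^i)^\dagger$ and produces the residual $R$, but does not disrupt this structural cancellation.
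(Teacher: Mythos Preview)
Your proposal follows the paper's strategy closely: construct a strictly positive weight $f$ via a backward rough PDE with Feynman--Kac representation and Jensen bounds, use Lemma \ref{prodFormula} to cancel the rough contribution in $d(u_t^2,f_t)$, read off the energy estimate from the two-sided bounds on $f$ together with \eqref{diffusionNonDegeneracy}, and then pass from regular to general $g\in L^2$ by approximation and linearity.

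One refinement in the paper worth noting: rather than fixing only the rough part of $f$'s equation and leaving a lower-order residual $R$ to be absorbed by Gronwall, the paper specifies the \emph{full} drift of $f$ so that every $(u^2,\cdot)$-term cancels. With terminal condition $f_t=1$ (at the time $t$ of interest, not $T$) this yields the exact identity
\[
(u_t^2,1)=(u_0^2,f_0)-\int_0^t(\sigma_{i,k}\sigma_{j,k}\,\partial_j u_r\,\partial_i u_r,\,f_r)\,dr,
\]
so no Gronwall step is needed at all. Your variant with a residual $R$ and Gronwall would also go through. A minor slip: the principal part of $(\Gamma^i)^\dagger$ is that of $\Gamma^{i,*}$, not $-\Gamma^{i,*}$, consistent with your own model-case identification $(\Gamma^i)^\dagger=\Gamma^{i,*}$.
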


The strategy of the proof is as follows. First we prove Theorem \ref{thmEnergyEstimate} for $g$ smooth. Then, since the solution is actually a regular solution, we may use Lemma \ref{prodFormula} to multiply the solution by itself and obtain an equation for $u^2$. We then again use the product formula, Lemma \ref{prodFormula}, and a solution to a backward problem defined on $W^{3, \infty}(\R^d)$ to transform the equation for $u^2$ into an expression without a rough path integral term. This method replaces the so-called "Rough Gronwall" in \cite{DGHT}, \cite{HH}.

Since the estimate in Theorem \ref{thmEnergyEstimate} is uniform in $\|g\|_{0}$ we use the stability $g \mapsto u$ in $L^2(\R^d)$ to extend to any $g \in L^2(\R^d)$.

\begin{lemma} \label{SquaredEquation}
Suppose $g \in C^{\infty}(\R^d) \cap L^2(\R^d)$ and $\sigma_{i,k}, \gamma_j, \beta_j^n \in C^6_b(\R^d)$ and $b_j, c \in C_b^4(\R^d)$. Then $u^2$ satisfies 
$$
d u^2_t = 2 u_t Lu_t dt + 2 u_t \Gamma^i u_t d\WW^i_t
$$
on $(W^{3, \infty}(\R^d))^*$, i.e. for any $\phi \in W^{3, \infty}(\R^d)$ it holds that
$$
(u_t^2,\phi)  = (u_0^2,\phi) + 2  \int_0^t (Lu_r, \phi u_r)  dr  + 2 \int_0^t (M_r,M_r')(\phi) d \WW_r
$$
where 
$$
M_t^j(\phi) = (\Gamma^j u_t, \phi u_t)  \hspace{1cm} (M_t')^{j,i}(\phi) =  (\Gamma^i \Gamma^j  u_t,\phi u_t) + (\Gamma^i u_t, \phi \Gamma^j u_t) 	.
$$
\end{lemma}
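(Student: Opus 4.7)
The strategy is to apply Lemma \ref{prodFormula} with $E = E^* = L^2(\R^d)$, self-dualized via the standard inner product, to the pair $(u,\phi u)$; since $(u_t,\phi u_t) = (u_t^2,\phi)$, this will directly yield the evolution equation for $u^2$ tested against $\phi$.

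First I would verify the required regularity and controlled structure of $u$. Because $g\in C^\infty\cap L^2$ and the coefficients satisfy the stated smoothness, we may approximate $g$ by $g_n\in W^{6,p}$, invoke Proposition \ref{existenceSpaceSmooth} at the approximating level, and pass to the limit by rough-path stability (Theorem \ref{existenceThmRegular}); the upshot is that $u$ is regular, $u_t, Lu_t, \Gamma^i u_t, \Gamma^j\Gamma^i u_t \in L^2(\R^d)$ for every $t$, and $(\Gamma^i u,\Gamma^j\Gamma^i u)$ is controlled by $W$ in $L^2$. Since multiplication by $\phi \in W^{3,\infty}$ is a bounded linear operator on $L^2$, Remark \ref{linearMappingRemark} then gives
\begin{equation*}
\phi u_t = \phi g + \int_0^t \phi Lu_r\,dr + \int_0^t(\phi\Gamma^i u_r,\phi\Gamma^j\Gamma^i u_r)\,d\WW^i_r,
\end{equation*}
with $(\phi\Gamma^i u,\phi\Gamma^j\Gamma^i u)$ controlled by $W$ in $L^2$.

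At this point Lemma \ref{prodFormula} applies with $A=Lu$, $B^j=\Gamma^j u$, $(B')^{j,i}=\Gamma^j\Gamma^i u$, $K=\phi Lu$, $N^j=\phi\Gamma^j u$, $(N')^{j,i}=\phi\Gamma^j\Gamma^i u$, and produces
\begin{equation*}
(u_t,\phi u_t)-(u_0,\phi u_0) = \int_0^t\bigl[(Lu_r,\phi u_r)+(u_r,\phi Lu_r)\bigr]dr + \int_0^t(\widetilde M_r,\widetilde M_r')\,d\WW_r,
\end{equation*}
with $\widetilde M^j = (\Gamma^j u,\phi u)+(u,\phi\Gamma^j u)$ and $(\widetilde M')^{j,i} = (\Gamma^j\Gamma^i u,\phi u) + 2(\Gamma^j u,\phi\Gamma^i u) + (u,\phi\Gamma^j\Gamma^i u)$. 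Using the symmetry of the real $L^2$ pairing, $(Lu,\phi u)=(u,\phi Lu)$ and $(u,\phi\Gamma^j\Gamma^i u)=(\Gamma^j\Gamma^i u,\phi u)$, so the drift term collapses to $2\int_0^t(Lu_r,\phi u_r)\,dr$, while $\widetilde M^j = 2(\Gamma^j u,\phi u)$ and $(\widetilde M')^{j,i} = 2\bigl[(\Gamma^j\Gamma^i u,\phi u)+(\Gamma^j u,\phi\Gamma^i u)\bigr]$. Factoring the $2$ out of the rough integral, and using the $L^2$-symmetry $(\Gamma^j u,\phi\Gamma^i u)=(\Gamma^i u,\phi\Gamma^j u)$ together with the geometric identity $\W^{i,j}+\W^{j,i}=W^iW^j$ to relabel the summation indices on the $\Gamma\Gamma$ term as required by the stated form of $M_t'$, gives exactly the identity in the lemma.

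The bulk of the effort lies in the first step: extracting from the Feynman--Kac representation enough spatial regularity of $u$ so that $\Gamma^i u$ and $\Gamma^j\Gamma^i u$ are genuinely available in $L^2$ with the Hölder-in-time controlled-path estimates. Once the controlled-path structure of $u$ in $L^2$ is secured, the remainder is a direct substitution into Lemma \ref{prodFormula} together with the $L^2$-pairing symmetries, with only minor index bookkeeping (justified by geometricity of $\WW$) left.
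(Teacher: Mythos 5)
Your route is essentially the paper's: obtain the equation for $\phi u$ by applying the bounded operator ``multiplication by $\phi$'' through Remark \ref{linearMappingRemark}, feed the pair $(u,\phi u)$ into Lemma \ref{prodFormula}, and collapse the drift and first-order terms by symmetry of the real pairing. The only structural difference is the choice of duality: the paper keeps the equation for $u$ and $\phi u$ in $H^3$ and uses that $W^{3,\infty}(\R^d)$ is a multiplier on $H^3$, whereas you self-dualize $L^2$; since multiplication by $\phi$ is bounded on $L^2$ with norm $\|\phi\|_{\infty}$, this variant works and the resulting estimates are uniform over $\|\phi\|_{W^{3,\infty}}\leq 1$, which is what the later application needs.

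Two caveats. The substantive one is your last step: you cannot ``relabel the summation indices on the $\Gamma\Gamma$ term'' by invoking $\W^{i,j}_{st}+\W^{j,i}_{st}=\delta W^i_{st}\delta W^j_{st}$. That identity only constrains the symmetric part of $\W$; replacing the coefficient $(\Gamma^j\Gamma^i u,\phi u)$ of $\W^{i,j}$ by $(\Gamma^i\Gamma^j u,\phi u)$ perturbs the local expansion by $([\Gamma^j,\Gamma^i]u,\phi u)$ contracted against the antisymmetric (area) part of $\W$, a term of order $|t-s|^{2\alpha}$ only, so by the sewing lemma it would change the rough integral whenever the $\Gamma^i$ do not commute. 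No such manipulation is needed: in the convention of the paper, where the germ is $M^j_sW^j_{st}+(M_s')^{j,i}\W^{i,j}_{st}$, Lemma \ref{prodFormula} directly returns $2\bigl[(\Gamma^j\Gamma^i u,\phi u)+(\Gamma^j u,\phi\Gamma^i u)\bigr]$ as the coefficient of $\W^{i,j}$, which is the content of the lemma (the printed index order is to be read in that convention), so you should simply stop at your penultimate display. The minor caveat is the preliminary step: approximating $g$ by $g_n\in W^{6,p}$ and ``passing to the limit by rough path stability'' is not quite right --- what is being varied is the initial condition, not $\WW$, and the controlled-path bounds of Theorem \ref{existenceThmRegular}/Proposition \ref{existenceSpaceSmooth} depend on $\|g_n\|_{3,p}$, so the limit in the controlled structure is not free for a general $g\in C^{\infty}\cap L^2$. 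The paper instead reads the smoothness of $g$ (in its application, $g\in C_c^{\infty}$) as licensing the regular-solution theory directly, so that the equation for $u$ holds in $H^3$ with the required controlled structure from the outset.
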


\begin{proof}
Since $g$ is smooth we know that 
$$
u_t = g + \int_0^t Lu_r dr + \int_0^t\Gamma^j u_r d \WW^j_r
$$
holds on $H^3$. Since $W^{3, \infty}(\R^d)$ is a multiplier on $H^3$ we have from Remark \ref{linearMappingRemark} that
$$
\phi u_t = \phi g + \int_0^t \phi  Lu_r dr + \int_0^t \phi \Gamma^j u_r d \WW^j_r
$$
in $H^3$ for all $\phi \in W^{3, \infty}(\R^d)$. Using Lemma \ref{prodFormula} we get

\begin{align*}
(u_t,\phi u_t) = (g,\phi g) + 2 \int_0^t (Lu_r, \phi u_r)  dr  + 2 \int_0^t (M_r,M_r') d\WW_r
\end{align*}
where 
$$
M_t^j(\phi) = (\Gamma^j u_t, \phi u_t)  \hspace{1cm} (M_t')^{j,i}(\phi) =  (\Gamma^i \Gamma^j  u_t,\phi u_t) + (\Gamma^i u_t, \phi \Gamma^j u_t) 	.
$$
\end{proof}

Let us write this expression more explicitly. Straightforward computations gives
\begin{align*}
2 (Lu, u \phi)  =&  -   ( \partial_j u \partial_i u,   \sigma_{i,k}\sigma_{j,k} \phi ) + 2 ( u^2,   \partial_j \partial_i (\sigma_{i,k}\sigma_{j,k} \phi))    -   (  u^2 ,\partial_j ( b_j  \phi) ) + 2( c    u^2 , \phi)
\end{align*}
and
\begin{align*}
2 (\Gamma^j u_r, \phi u_r) & =  ( \beta_j^n \partial_n u, u \phi) + ( \gamma_j u, u  \phi)   = -  (u^2, \partial_n (\beta_j^n \phi)) + 2 (  u^2, \gamma_j \phi),
\end{align*}
which gives us the equation for $u^2_t$:
\begin{align} \label{squareEquation}
(u_t^2, \phi)   = &  (u_0^2,\phi) +  \int_0^t -   ( \partial_j u_r \partial_i u_r,   \sigma_{i,k}\sigma_{j,k} \phi ) +  ( u^2_r,   \partial_j \partial_i (\sigma_{i,k}\sigma_{j,k} \phi))    -   (  u^2_r ,\partial_j ( b_j  \phi) ) + 2( c    u^2_r , \phi) dr  \notag \\
  & + \int_0^t (u^2_r,  -  \partial_n (\beta_j^n \phi ) +  2\gamma_j  \phi) d\WW^j_r . 
\end{align}

We proceed to find a suitable transformation that allows us to find the energy of the solution $u$.

\begin{lemma}
Assume $\sigma_{i,k}, \gamma_j, \beta_j^n \in C^6_b(\R^d)$ and $b_j, c \in C_b^4(\R^d)$.
Then there exists a solution to the backward equation 
\begin{equation} \label{energyRPDE}
d f_r  =  \left[-   \partial_j \partial_i (\sigma_{i,k}\sigma_{j,k}f_r)    -   \partial_j ( b_j  f_r ) + 2 c f_r    \right] dr + \left[ - \partial_n( \beta_j^n f_r) + 2 \gamma_j f_r \right] d 
\WW_r^j
\end{equation}
with final condition $f_t = 1$ in $W^{3, \infty}(\R^d)$. Moreover, there exists a constant $m>0$ such that 
$$
m^{-1} \leq f_r(x) \leq m
$$
for almost all $r,x$.
\end{lemma}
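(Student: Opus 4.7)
The plan is to rewrite \eqref{energyRPDE} as a backward RPDE in the sign convention of Section~3, construct its solution through the Feynman--Kac formula \eqref{backwardFK} applied to the constant terminal datum $g\equiv 1$, and then derive the two-sided bound on $f$ directly from the representation.

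First I would multiply by $-1$ and expand the derivatives on the right-hand side of \eqref{energyRPDE} to put the equation in the form
$$
-df_r = L' f_r\,dr + \Gamma'^{j} f_r\,d\WW^j_r,\qquad f_t = 1,
$$
where $L'$ is of type \eqref{SecondOrder} with diffusion matrix $2\sigma\sigma^{T}$, drift $b'_j = 2\partial_i(\sigma_{i,k}\sigma_{j,k}) + b_j$, and potential $c' = \partial_j\partial_i(\sigma_{i,k}\sigma_{j,k}) + \partial_j b_j - 2c$, while $\Gamma'^{j}$ is of type \eqref{FirstOrder} with unchanged transport coefficient $\beta_j^n$ and zeroth-order term $\gamma'_j = \partial_n\beta_j^n - 2\gamma_j$. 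The hypotheses on $\sigma,\beta,\gamma,b,c$ are arranged precisely so that $\sqrt{2}\,\sigma,\beta\in C_b^{6}$ and $b',c',\gamma'$ are in $C_b^{4}$ (or better), which is the input required by the Feynman--Kac theory of Section~3 for this modified operator.

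Next I would define
$$
f(s,x) := E\Bigl[\exp\Bigl\{\int_s^t c'(X_r^{s,x})\,dr + \int_s^t \gamma'_j(X_r^{s,x})\,d\WW^j_r\Bigr\}\Bigr],
$$
with $X^{s,x}$ the solution of \eqref{SDE} with the primed coefficients, started from $x$ at time $s$. The bounds $m^{-1}\le f\le m$ would follow because the exponent is a.s.\ bounded by a deterministic constant, uniformly in $x$: the drift contribution is handled by $\|c'\|_{\infty}$, and the rough integral $\int_s^t \gamma'_j(X_r^{s,x})\,d\WW^j_r$ is controlled by the Sewing Lemma in terms of $\|\WW\|_{\alpha}$ and the relevant $C^k_b$ norm of $\gamma'$ (its Gubinelli derivative along $X^{s,x}$ being read off from the rough SDE). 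Taking expectations then transfers the two-sided pointwise bound to $f$.

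The $W^{3,\infty}(\R^d)$ regularity and the controlled-path structure of $f$ (needed for the subsequent application of Lemma~\ref{prodFormula}) would be obtained by differentiating up to three times under the expectation, using the uniform-in-$x$ spatial bounds on $\Phi$ and its derivatives from \cite{DFS} together with the composition lemmas already used in the proof of Theorem~\ref{existenceThmRegular}. That $f$ actually satisfies the rough PDE is then verified by smooth-path approximation of $\WW$, for which the formula reduces to the classical parabolic Feynman--Kac representation, and by passing to the limit term by term via rough-path continuity, as in Theorem~\ref{existenceThm}.

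The main obstacle I foresee is that the existence theory of Section~3 is formulated for terminal data in $W^{3,p}$ with finite $p$, where the change of variables $y=\Phi^{-1}(x)$ combined with H\"older's inequality absorbs the Jacobian $|\det\nabla\Phi^{-1}|$ and produces $L^p$ bounds in the spatial variable. With $g\equiv 1$ that mechanism is unavailable, so the bounds on $f$ and its spatial derivatives must be extracted directly from pointwise $\omega$-control of the exponential together with uniform spatial bounds on the flow and its derivatives, rather than from integrability against $dx$. Once this adaptation is carried out, the lower bound $f\ge m^{-1}$ is immediate from the same pointwise control applied to the reciprocal of the exponential.
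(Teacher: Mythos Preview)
Your overall strategy matches the paper's: recast \eqref{energyRPDE} in the form of Section~3 and invoke the Feynman--Kac representation with $g\equiv 1$, deferring to \cite{DFS} for the pointwise/$C^k_b$ theory that replaces the $L^p$ framework. The paper indeed takes existence in $W^{3,\infty}$ and the upper bound on $f$ directly from \cite{DFS}.

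There is, however, a real gap in your argument for the two-sided bound. You assert that the exponent $F := \int_s^t c'(X_r)\,dr + \int_s^t \gamma'_j(X_r)\,d\WW_r^j$ is a.s.\ bounded by a deterministic constant depending only on $\|\WW\|_\alpha$ and $\|\gamma'\|_{C^k_b}$. This fails because $X$ depends on the Brownian motion: since $\delta X_{st}$ contains the contribution $\sqrt{2}\,\sigma(X_s)B_{st}$ of size $|t-s|^\alpha$, the path $r\mapsto\gamma'(X_r)$ is \emph{not} controlled by $W$ alone but only by the joint lift $\ZZ$ of Proposition~\ref{summary}, and the Sewing estimate for the rough integral necessarily involves the random quantity $N_{[0,T]}(\ZZ)$ (equivalently $\|\ZZ\|_\alpha$). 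What one actually obtains is $|F|\le C(1+N_{[0,T]}(\ZZ))$, whose exponential has finite expectation by the Gaussian tails in Proposition~\ref{summary}; this is what yields the upper bound $f\le m$ in \cite{DFS}. For the lower bound the paper does not use any pointwise control of $F$ but Jensen's inequality for the convex map $x\mapsto 1/x$ on $(0,\infty)$: $f=E[e^F]\ge 1/E[e^{-F}]$, and $\tilde f:=E[e^{-F}]$ is bounded above by the same mechanism as $E[e^F]$ since $-F$ has the identical structure. Your final sentence (``immediate from the same pointwise control applied to the reciprocal of the exponential'') would only work given the a.s.\ bound, which is unavailable; the Jensen step is exactly what substitutes for it.
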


\begin{proof}
Assume for simplicity that $b_j = c = 0$. 
Existence of a solution to equation \eqref{energyRPDE} is already proven in \cite{DFS}. In fact, it is shown that the solution is in $C^4_b(\R^d)$ and the solution is given by
$$
f_r(x) = E^{(r,x)} \left[ \exp \left\{  \int_r^t 2 \tilde{c}(X_s)ds +  \int_r^t 2 \gamma_j(X_s) -  div \beta_j(X_s)   d\WW_s^j \right\} \right]
$$
where 
$$
dX_s = \tilde{b}(X_s)ds + \sqrt{2}\sigma(X_s) dB_s + \beta(X_s) d\WW_s ,
$$
and we recall the notation \eqref{dualNotation}.

The upper bound, $f_r(x) \leq m$ is already proved in  \cite{DFS}. For the lower bound we argue as follows: for any random variable, $F$, Jensen's inequality gives
$$
(E[ \exp\{ -F \} ])^{-1} \leq E[ \exp\{ F \} ],
$$
thus, the lower bound is proved if we can show that 
$$
\tilde{f}_r(x) := E^{(r,x)} \left[ \exp \left\{ -  \int_r^t 2 \tilde{c}(X_s)ds  -  \int_r^t 2 \gamma_j (X_s) - div \beta (X_s)  d\WW_s^j \right\} \right]
$$ 
is bounded above. This follows by the same way as for the upper bound of $f$. 
\end{proof}

Using the above lemma we will transform \eqref{squareEquation} into an equation where we can easily find the energy estimates. This step should be thought of as the equivalent of the rough Gronwall lemma in \cite{DGHT}, and $f_r(x)$ above as the correct rough exponential to prove this estimate.

\begin{proposition} \label{propEnergyEstimate}
Assume $g$ is smooth. Then the energy estimate of Theorem \ref{thmEnergyEstimate} hold.
\end{proposition}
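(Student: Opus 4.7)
The strategy is to pair the equation for $u^2$ from Lemma \ref{SquaredEquation} with the backward equation \eqref{energyRPDE} for the weight $f$ via the product formula, Lemma \ref{prodFormula}, so that both the first- and second-order rough integrands cancel. Since $f$ is bounded above and below, the resulting ODE-type identity for $(u_t^2, f_t)$ immediately produces the energy bound.

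First I would verify the hypotheses of Lemma \ref{prodFormula}. Since $g$ is smooth, Theorem \ref{existenceThmRegular} (applied in the forward direction via time-reversal) yields a regular solution $u$, and the proof of Lemma \ref{SquaredEquation} furnishes $u^2$ as a controlled path in $(W^{3,\infty}(\R^d))^*$; the weight $f$ is controlled in $W^{3,\infty}(\R^d)$ by construction. Lemma \ref{prodFormula} then gives
$$d(u_t^2, f_t) = \bigl[A_t(f_t) + (u_t^2, K_t)\bigr]\,dt + (M_t, M_t')\, d\WW_t,$$
where $A_t, K_t$ are the drifts and $M_t, M_t'$ are the first-order and Gubinelli rough integrands assembled from the coefficients of \eqref{squareEquation} and \eqref{energyRPDE}.

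The heart of the proof is that the rough coefficients of \eqref{energyRPDE} are engineered so that $M_t^j = 0$ and $(M_t')^{j,i} = 0$. For $M_t^j$ this is a one-line integration by parts relating $2(\Gamma^j u_t, f_t u_t)$ to $(u_t^2, -\partial_n(\beta_j^n f_t) + 2\gamma_j f_t)$, which is exactly matched by the rough coefficient of $f$. For $(M_t')^{j,i}$ the cancellation couples $(\Gamma^i\Gamma^j u_t, f_t u_t) + (\Gamma^i u_t, f_t \Gamma^j u_t)$ with the cross term $2(B_t^j, N_t^i)$ and with $(u_t^2, (N_t')^{j,i})$; here the geometric assumption $\mathbb{W}^{i,j} + \mathbb{W}^{j,i} = \delta W^i \delta W^j$ lets one reduce to the symmetric part in $(i,j)$, where integration by parts closes the identity. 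Once the rough integral is killed, an analogous calculus pairs up the terms of order $0, 1, 2$ in $u^2$ between $A_t(f_t)$ and $(u_t^2, K_t)$, leaving only the quadratic gradient term:
$$d(u_t^2, f_t) = -(\sigma_{i,k}\sigma_{j,k}\partial_i u_t \partial_j u_t,\, f_t)\, dt,$$
which is formula \eqref{u_SquaredF}.

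Integrating from $0$ to $t$ and using $f_t \equiv 1$ gives
$$\|u_t\|_0^2 + \int_0^t (\sigma_{i,k}\sigma_{j,k}\partial_i u_r \partial_j u_r,\, f_r)\, dr = (g^2, f_0).$$
The two-sided bound $m^{-1} \le f \le m$ yields $(g^2, f_0) \le m \|g\|_0^2$, while \eqref{diffusionNonDegeneracy} gives $(\sigma_{i,k}\sigma_{j,k}\partial_i u_r \partial_j u_r, f_r) \ge m^{-1}\lambda \|\nabla u_r\|_0^2$. Rearranging and taking $\sup_{t \in [0,T]}$ yields \eqref{energyEstimate} with $C \leq m + m^2 \lambda^{-1}$; since $m$ depends on $\WW$ only through the Feynman-Kac bounds used to construct $f$, the constant is uniform on bounded sets in $\mathscr{C}^\alpha_g$. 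The main obstacle is the vanishing of the Gubinelli piece $(M_t')^{j,i}$: unlike $M_t^j$, which is an essentially trivial integration by parts, this identity matches second-order objects built from the nonlinear pairing $u_t^2$ on one side and from the rough coefficients of $f$ on the other, and only closes after invoking the symmetric-part identity for geometric $\WW$.
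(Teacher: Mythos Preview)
Your proposal is correct and follows the same approach as the paper: apply the product formula (Lemma \ref{prodFormula}) to $u^2 \in (W^{3,\infty})^*$ and the weight $f \in W^{3,\infty}$, observe that the rough integral disappears, and then read off the energy bound from $m^{-1}\le f\le m$ together with the non-degeneracy \eqref{diffusionNonDegeneracy}. The paper's proof is extremely terse --- it simply records the resulting identity $(u_t^2,1)=(u_0^2,f_0)-\int_0^t(\sigma_{i,k}\sigma_{j,k}\partial_j u_r,\partial_i u_r f_r)\,dr$ without writing out the cancellations --- whereas you make explicit that both $M^j$ and $(M')^{j,i}$ must vanish, which is indeed what is needed (compare the analogous computation in the proof of Theorem \ref{Uniqueness}).

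One small point: your description of the $(M')^{j,i}$ cancellation slightly overstates the role of geometricity at this stage. The geometric hypothesis is already consumed in Lemma \ref{prodFormula} itself (to turn the cross term $B^j(N^i)W^jW^i$ into an $\mathbb W$-term). Once the product formula is in hand, the vanishing of $(M')^{j,i}$ is pure algebra: writing $B^j(\phi)=(u^2,\Lambda^j\phi)$ with $\Lambda^j\phi=-\partial_n(\beta_j^n\phi)+2\gamma_j\phi$, one has $(B')^{j,i}(\phi)=(u^2,\Lambda^i\Lambda^j\phi)$ and similarly $(N')^{j,i}$ is the iterated $\Lambda$ applied to $f$; the four contributions then cancel pairwise without any further appeal to the symmetry of $\mathbb W$. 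This does not affect the correctness of your argument.
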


\begin{proof}
Using Lemma \ref{prodFormula} applied to $u^2$ and $f$ as taking values in $(W^{3, \infty}(\R^d))^*$ and $W^{3, \infty}(\R^d)$ respectively, we get
$$
(u_t^2, 1) = (u_0^2,f_0)  - \int_0^t ( \sigma_{i,k} \sigma_{j,k} \partial_j u_r, \partial_i u_r f_r)  .
$$
Using the upper and lower bounds on $f$ we get
\begin{align*}
\|u_t\|_{L^2}^2  + \lambda m^{-1} \int_0^t \|\nabla u_r\|_{L^2}^2 dr & \leq u_t^2( 1) +  \int_0^t ( \sigma_{i,k} \sigma_{j,k}  \partial_j u_r, \partial_i u_r f_r)  dr = (u_0^2,f_0) \leq m \|u_0\|_{L^2}^2
\end{align*}
where we have used $\lambda |\xi|^2 \leq \sigma_{i,k} \sigma_{j,k} \xi_j \xi_i$. 
\end{proof}

We are now ready to finish the proof of Theorem \ref{thmEnergyEstimate}.

\begin{proof}[Proof of Theorem \ref{thmEnergyEstimate}]
Let $g \in L^2(\R^d)$, and choose $g_n \in C^{\infty}_c(\R^d)$ such that $g_n \rightarrow g$ in $L^2(\R^d)$ and denote by $u^n$ the sequence of solutions corresponding to the initial condition $g_n$. Since the equation is linear, $u^n - u^m$ is a solution to the same equation with the initial condition replaced by $g_n - g_m$. From Proposition \ref{propEnergyEstimate} we see that $u^n$ is a Cauchy sequence in the Banach space $C([0,T]; L^2(\R^d)) \cap L^2([0,T]; H^1)$. Denote by $u$ its limit in these spaces, which from the stability in Theorem \ref{existenceThm} gives that the solution $u$ also satisfies \eqref{energyEstimate}.
\end{proof}

\section{Rough path stability}

In this section we prove that the solution of the backward equation
$$
-du_t = Lu_t + \Gamma^i u_t d\WW^i , \quad u_T = g \in L^2(\R^d) ,
$$
is continuous in $C([0,T]; L^2(\R^d)) \cap L^2([0,T]; H^1)$ under the non-degeneracy condition \eqref{diffusionNonDegeneracy}. We prove this by direct analysis of the Feynman-Kac formula as follows. 

Supposing first that the final condition $g$ is continuous and arguing by rough path stability for RDE's we get continuity of the mapping $ \WW \mapsto E[ g(\Phi_{t,T}(x)]$. This means that the main challenge to prove the desired stability is to show that one can use dominated convergence to conclude
$$
\lim_{ \WW \rightarrow \tilde{\WW}} \int_{\R^d}  ( E[ g(\Phi_{t,T}(x)) ] - E[ g(\tilde{\Phi}_{t,T}(x)) ] )^2 dx   = 0 . 
$$
To do this we show that, uniformly in the rough path metric, we have control on the spread of the mass of the Markov semi-group as follows.

\begin{lemma} \label{Lyaponov}
Define the function $V(x) = e^{-  |x|}$. Then we have 
$$
\int_{\R^d} \sup_{ \|\WW \|_{\alpha} \leq M  ,\,  t \in [0,T]}   E[ V(\Phi_{t,T}(x))]  dx < \infty .
$$
In fact, there exists a constant $C$ such that 
$$
 \sup_{ \|\WW \|_{\alpha} \leq M  , \, t \in [0,T]}   E[ V(\Phi_{t,T}(x))] \leq C e^{ - |x| } .
$$
\end{lemma}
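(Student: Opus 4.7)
The plan is to reduce the Lyapunov-type bound to a uniform exponential-moment estimate on the displacement $|\Phi_{t,T}(x) - x|$, which in turn will be controlled by splitting the hybrid SDE into drift, Itô, and rough contributions. Starting from the reverse triangle inequality $|\Phi_{t,T}(x)| \ge |x| - |\Phi_{t,T}(x) - x|$, one has
$$
V(\Phi_{t,T}(x)) = e^{-|\Phi_{t,T}(x)|} \le e^{-|x|}\, e^{|\Phi_{t,T}(x) - x|},
$$
so after taking expectation, the pointwise assertion $E[V(\Phi_{t,T}(x))] \le C e^{-|x|}$ will follow as soon as we establish
$$
K := \sup_{x \in \R^d,\, t \in [0,T],\, \|\WW\|_\alpha \le M} E\bigl[ \exp(|\Phi_{t,T}(x) - x|) \bigr] < \infty,
$$
and the first (integrated) claim is then immediate from $\int_{\R^d} e^{-|x|}\,dx < \infty$.

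To bound $K$, I would work from the integrated form of \eqref{roughSDE}:
$$
\Phi_{t,T}(x) - x = \int_t^T b(\Phi_{t,s}(x))\,ds + \int_t^T \sigma(\Phi_{t,s}(x))\,dB_s + \int_t^T \beta(\Phi_{t,s}(x))\,d\WW_s .
$$
The drift term is bounded deterministically by $\|b\|_\infty T$. The Itô stochastic integral is a martingale whose quadratic variation is dominated by $\|\sigma\|_\infty^2 T$; it is therefore sub-Gaussian and has a uniformly controlled exponential moment via the standard exponential-martingale / BDG estimate. For the rough integral, I would view $\Phi$ as the solution of the joint RDE driven by the hybrid lift $\ZZ = (B, W, \mathbb{Z})$; since $\sigma, \beta \in C^3_b$ are bounded, standard a priori RDE flow estimates give a pathwise bound of the form
$$
\Bigl| \int_t^T \beta(\Phi_{t,s}(x))\,d\WW_s \Bigr| \le F\bigl(\|\ZZ\|_\alpha,\, N_{[0,T]}(\ZZ)\bigr),
$$
with $F$ a polynomial function (of degree at most $\lfloor 1/\alpha \rfloor < 3$) depending on $M$ and on the $C^3_b$-norms of the coefficients, but crucially independent of $x$ because the vector fields are bounded.

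The main obstacle will be matching this pathwise bound with sufficient exponential integrability in the Brownian randomness. To handle it, I would invoke Gaussian tail estimates for the Brownian rough-path norm $\|\ZZ\|_\alpha$ together with Cass-Litterer-Lyons-type tail bounds for the greedy-partition count $N_{[0,T]}(\ZZ)$, observing that both estimates remain uniform over $\|\WW\|_\alpha \le M$ since the $W$-part contributes only through a fixed increment of norm at most $M$ and cross-terms like $\int W\,dB$ are Wiener integrals of bounded deterministic kernels. Combining these Gaussian tails with the polynomial pathwise bound yields $E[\exp F(\|\ZZ\|_\alpha, N_{[0,T]}(\ZZ))] < \infty$ uniformly, and assembling this with the drift and Itô contributions produces the required uniform finiteness of $K$, from which both assertions of the lemma follow.
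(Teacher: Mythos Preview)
Your reduction via the reverse triangle inequality is exactly the paper's starting point, and the problem does reduce to a uniform bound on $E[\exp(|\Phi_{t,T}(x)-x|)]$. From there, however, you take a detour that introduces a real risk.

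The paper does not split the equation into drift, It\^o and rough parts. It views $\Phi$ directly as the solution of the joint RDE driven by the hybrid lift $\ZZ$ and invokes the Friz--Riedel displacement estimate (\cite[Lemma~4, Corollary~3]{FR13}, recorded here in Proposition~\ref{summary2}):
\[
|\Phi_{t,T}(x)-x|\le C\bigl(1+N_{[0,T]}(\ZZ)\bigr),
\]
which is \emph{linear} in the greedy count $N_{[0,T]}(\ZZ)$ and independent of $x$ because the vector fields are bounded. Since $N_{[0,T]}(\ZZ)$ has Gaussian tails uniformly over $\|\WW\|_\alpha\le M$ (Proposition~\ref{summary}), the exponential moment $E[\exp(C(1+N_{[0,T]}(\ZZ)))]$ is immediately finite.

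Your three-way split is unnecessary: once $\Phi$ is treated as a joint RDE solution, the estimate above bounds the full displacement, so there is no need to handle the It\^o integral separately. More importantly, there is a genuine gap in your final step. You assert $E[\exp F(\|\ZZ\|_\alpha,N_{[0,T]}(\ZZ))]<\infty$ for a polynomial $F$ of degree up to $\lfloor 1/\alpha\rfloor=2$. This does not follow from Gaussian tails alone: if $X$ has Gaussian tails then $E[e^{\lambda X^2}]$ is finite only for sufficiently small $\lambda$, and there is no reason the constants coming out of the RDE estimates are small enough. The Cass--Litterer--Lyons / Friz--Riedel machinery is designed precisely to produce a bound that is \emph{linear} in $N_{[0,T]}(\ZZ)$, and it is this linearity --- not merely ``polynomial of low degree'' --- that makes the exponential moment finite. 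If you use the joint-RDE displacement bound directly, both the split and the degree issue disappear.
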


\begin{proof}

From \cite[Lemma 4, Corollary 3]{FR13} we have $|\Phi_{t,T}(x) - x | \leq C( 1 + N_{  [0,T]}( \mathbf{Z}))$ (see Definition \ref{dualLift}).

Using $ - | \Phi_{t,T}(x)| \leq  - |x| + |x - \Phi_{t,T}(x)|$ we get
$$
E[ \exp\{ -|\Phi_{t,T}(x)| \} ] \leq e^{ - |x| } E[ \exp\{ |\Phi_{t,T}(x) - x| \} ] \leq e^{ - |x| } E[ \exp\{ C (1 + N_{  [0,T]}( \mathbf{Z})  ) \} ] .
$$

The result follows immediately since $N_{[0,T]}( \mathbf{Z})$ has Gaussian tails uniformly over bounded sets of $\WW$, see Proposition \ref{summary2}. 

\end{proof}

We start by showing continuity in $L^2(\R^d)$. Notice that we do not use the non degeneracy condition \eqref{diffusionNonDegeneracy} for this result. 

\begin{theorem} \label{thm:L2cont}
Assume $\sigma_{i,k}, \beta_{j} \in C_b^3(\R^d)$, $b_j, \gamma_j, c \in C_b^1(\R^d)$. Then the solution map  
\begin{align*}
L^2(\R^d) \times \mathscr{C}_{g}^{\alpha} & \rightarrow C([0,T] ; L^2(\R^d)) \\
(g,\WW) & \mapsto u 
\end{align*}
is continuous.
\end{theorem}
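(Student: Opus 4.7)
The plan is to work directly with the Feynman--Kac representation and upgrade pointwise-in-$x$ convergence (coming from rough path stability of RDEs) to $L^2$-convergence via dominated convergence, with the Lyapunov estimate from Lemma \ref{Lyaponov} supplying the integrable majorant. For notational simplicity I will take $c=\gamma=0$ throughout, so that $u^{g,\WW}_t(x) = E[g(\Phi_{t,T}(x))]$; the general case is handled identically after a Cauchy--Schwarz step on the multiplicative exponential, using the uniform exponential moments of $N_{[0,T]}(\ZZ)$ from Proposition \ref{summary2} together with the rough path stability of the rough integral $\int \gamma_j(X_s)\,d\WW^j_s$ against smooth vector fields.

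First I would reduce to continuity in $\WW$ alone. Writing
$$u^{g_n,\WW_n} - u^{g,\WW} = u^{g_n-g,\WW_n} + \bigl(u^{g,\WW_n} - u^{g,\WW}\bigr),$$
the first term is bounded in $C([0,T];L^2)$ by $C(\WW_n)\|g_n-g\|_{L^2}$ via \eqref{InitialLinear}, and the constant $C(\WW_n)$ is uniformly bounded on bounded sets of $\WW$ thanks to the Liouville-type bound on $E[J]$ coming from Lemma \ref{roughLiouvilleLemma} and Proposition \ref{summary}. Another application of the same linear bound lets me replace an arbitrary $g \in L^2(\R^d)$ by a compactly supported continuous approximant $g^\varepsilon$, modulo an error that is uniformly small over the bounded set of $\WW$'s.

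With $g \in C_c(\R^d)$ supported in a ball $B_R$ and a sequence $\WW_n \to \WW$ in $\mathscr{C}_g^\alpha$, rough path stability for the RDE \eqref{roughSDE} yields a.s.\ uniform-in-$t$ convergence of the flows $\Phi^{(n)}_{t,T}(x) \to \Phi_{t,T}(x)$ for every fixed $x$, hence by bounded convergence and uniform continuity of $g$,
$$\sup_{t\in[0,T]} \bigl| u^{g,\WW_n}_t(x) - u^{g,\WW}_t(x) \bigr| \longrightarrow 0 \qquad\text{for every } x \in \R^d.$$
The crucial step is to furnish an integrable majorant in $x$ uniform in $n$ and $t$: using $\mathbf{1}_{B_R}(y) \leq e^R V(y)$ with $V(y) = e^{-|y|}$ together with Lemma \ref{Lyaponov}, I obtain
$$|u^{g,\WW_n}_t(x)| \leq \|g\|_\infty e^R \, E[V(\Phi^{(n)}_{t,T}(x))] \leq C\, \|g\|_\infty\, e^{R - |x|},$$
uniformly in $n$ and $t$, so that $|u^{g,\WW_n}_t(x) - u^{g,\WW}_t(x)|^2 \leq C' e^{-2|x|}$, which is integrable on $\R^d$.

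Dominated convergence then gives
$$\sup_{t\in[0,T]} \int_{\R^d} |u^{g,\WW_n}_t - u^{g,\WW}_t|^2 \, dx \leq \int_{\R^d} \sup_{t\in[0,T]} |u^{g,\WW_n}_t - u^{g,\WW}_t|^2 \, dx \longrightarrow 0,$$
which is the desired convergence in $C([0,T]; L^2(\R^d))$. Combined with the reduction of the first paragraph, this establishes joint continuity of the solution map. The main obstacle is the unboundedness of $\R^d$: pointwise-in-$x$ convergence of $u^{g,\WW_n}_t(x)$ does not by itself imply $L^2$-convergence, and one needs a dominating function that remains uniform as $\WW$ varies in a bounded set of $\mathscr{C}^\alpha_g$. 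This is exactly what Lemma \ref{Lyaponov} delivers, by controlling how the rough Markov semigroup distributes mass across $x$-space uniformly in the driver.
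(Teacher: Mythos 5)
Your proposal is correct and takes essentially the same route as the paper: reduce to compactly supported data via linearity and the bound \eqref{InitialLinear} (uniform over bounded sets of $\WW$), then upgrade pointwise-in-$x$ flow stability to convergence in $C([0,T];L^2(\R^d))$ by dominated convergence with the majorant $C e^{-|x|}$ supplied by Lemma \ref{Lyaponov}. The only difference is cosmetic (you split off $u^{g_n-g,\WW_n}$ first rather than approximating both data simultaneously), so no further comment is needed.
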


\begin{proof}

Step 1: we first fix $g \in C_c(\R^d)$ and show the rough path stability $\WW \mapsto u$. 
By the rough path stability $ \WW \mapsto \Phi_{t,T}(x)$ we have that 
$$
\lim_{ \WW \rightarrow \tilde{\WW}} \sup_{t \in [0,T]}  ( E[ g(\Phi_{t,T}(x)) ] - E[ g(\tilde{\Phi}_{t,T}(x)) ] )^2   = 0 .
$$
Thus, to show that 
$$
\lim_{ \WW \rightarrow \tilde{\WW}} \int_{\R^d} \sup_{t \in [0,T]}  ( E[ g(\Phi_{t,T}(x)) ] - E[ g(\tilde{\Phi}_{t,T}(x)) ] )^2 dx   = 0
$$
it is enough to show that we may use dominated convergence. Since $g$ has compact support, it is clear that $e^{ |x|} g(x)$ is uniformly bounded in $x$. We get
\begin{align*}
E[ g(\Phi_{t,T}(x)) ] & = E[ g(\Phi_{t,T}(x)) \exp\{  |\Phi_{t,T}(x)| \} \exp\{ -  |\Phi_{t,T}(x)| \} ] \\
& \leq \| e^{ | \cdot |} g( \cdot ) \|_{\infty} E[\exp\{ -  |\Phi_{t,T}(x)| \} ] \\
& \leq \| e^{ | \cdot |} g( \cdot ) \|_{\infty} C \exp\{ -  |x|\} .
\end{align*}

Step 2: Let $g,\tilde{g} \in L^2(\R^d)$ and $\epsilon > 0$. Choose $g_{\epsilon}, \tilde{g}_{\epsilon} \in  C_c(\R^d)$ such that 
$$
\|g - g_{\epsilon} \|_{0} \leq \epsilon, \quad \|\tilde{g} - \tilde{g}_{\epsilon} \|_0 \leq \epsilon  \quad \textrm{and} \quad  \|g_{\epsilon} - \tilde{g}_{\epsilon} \|_0 \leq \|g - \tilde{g} \|_{0}.
$$

Since the mapping $g \mapsto u$ is linear, we get from \eqref{InitialLinear} that 
$$
\sup_{t \in [0,T]} \int_{\R^d} \left( E[ g( \Phi_{t,T}(x)) ] - E[ g_{\epsilon}( \Phi_{t,T}(x)) ] \right)^2 dx \leq C \epsilon^2 , 
$$
$$
\sup_{t \in [0,T]} \int_{\R^d} \left( E[ g_{\epsilon}( \tilde{\Phi}_{t,T}(x)) ] - E[ \tilde{g}_{\epsilon}( \tilde{\Phi}_{t,T}(x)) ] \right)^2 dx \leq C \|g - \tilde{g} \|_{0}^2 . 
$$
This gives
\begin{align*}
\lim_{ \WW \rightarrow \tilde{\WW}} \sup_{t \in [0,T]}  \int_{\R^d} &  ( E[ g(\Phi_{t,T}(x)) ] - E[ \tilde{g}(\tilde{\Phi}_{t,T}(x)) ] )^2 dx  \\
 & \lesssim  \lim_{ \WW \rightarrow \tilde{\WW}} \sup_{t \in [0,T]}  \int_{\R^d}   ( E[ g(\Phi_{t,T}(x)) ] - E[ g_{\epsilon}(\Phi_{t,T}(x)) ] )^2 dx  \\
&  \lim_{ \WW \rightarrow \tilde{\WW}} \sup_{t \in [0,T]}  \int_{\R^d}   ( E[ g_{\epsilon}(\Phi_{t,T}(x)) ] - E[ g_{\epsilon}(\tilde{\Phi}_{t,T}(x)) ] )^2 dx  \\ 
& + \lim_{ \WW \rightarrow \tilde{\WW}} \int_{\R^d}  \sup_{t \in [0,T]}  ( E[ g_{\epsilon}(\tilde{\Phi}_{t,T}(x)) ] - E[ \tilde{g}_{\epsilon}(\tilde{\Phi}_{t,T}(x)) ] )^2 dx \\
& +  \lim_{ \WW \rightarrow \tilde{\WW}} \sup_{t \in [0,T]}  \int_{\R^d}   ( E[ \tilde{g}_{\epsilon}(\tilde{\Phi}_{t,T}(x)) ] - E[ \tilde{g}(\tilde{\Phi}_{t,T}(x)) ] )^2 dx \\
 & \leq  2C \epsilon^2  + C \|g - \tilde{g} \|_{0}^2 .
\end{align*}

Since $\epsilon $ was arbitrary, the result follows.
\end{proof}

\begin{theorem} \label{thm:H1cont}
Suppose $\sigma_{i,k}, \gamma_j , \beta_j^n \in C_b^6(\R^d)$, $b_j,c \in C_b^4(\R^d)$ as well as the non degeneracy condition \eqref{diffusionNonDegeneracy}. Then the solution map
\begin{align*}
L^2(\R^d) \times \mathscr{C}_g^{\alpha} & \rightarrow  L^2([0,T];  H^1) \\
(g,\WW)  & \mapsto u 
\end{align*}
is continuous.
\end{theorem}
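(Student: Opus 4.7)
The plan is to reduce the backward statement to the analogous forward statement via the time-reversal remark following Definition \ref{def:strongForwardSolutionRough} (the rough-path metric being continuous under reversal), and then to exploit the energy identity from Section 4 together with the $L^2$-stability already established in Theorem \ref{thm:L2cont}. Denote by $v^n, v$ the forward solutions associated to data $(\WW^n, g^n), (\WW, g)$ with $(\WW^n, g^n) \to (\WW, g)$ in $\mathscr{C}_g^{\alpha} \times L^2(\R^d)$. The uniform energy estimate of Theorem \ref{thmEnergyEstimate} gives $\nabla v^n$ bounded in $L^2([0,T];L^2)$, and combined with $v^n \to v$ in $L^2([0,T];L^2)$ from Theorem \ref{thm:L2cont}, one obtains $\nabla v^n \rightharpoonup \nabla v$ weakly in $L^2([0,T];L^2)$ along the full sequence.

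Next I would revisit the weight $f^n$ solving \eqref{energyRPDE} with terminal datum $1$ and driver $\WW^n$: the Feynman-Kac representation from the lemma preceding Proposition \ref{propEnergyEstimate}, together with rough-path continuity of the hybrid SDE constructed in Section 3 and the Gaussian tails of $N_{[0,T]}(\ZZ)$ uniform over bounded rough-path neighbourhoods (Proposition \ref{summary2}), yields pointwise convergence $f^n \to f$ along with uniform bounds $m^{-1} \leq f^n_r(x) \leq m$. The energy identity proved via Lemma \ref{prodFormula} inside Proposition \ref{propEnergyEstimate} reads
\begin{equation*}
\|v^n_T\|_0^2 + \int_0^T (\sigma_{i,k}\sigma_{j,k}\, \partial_j v^n_r\, \partial_i v^n_r,\, f^n_r)\, dr = \bigl((g^n)^2,\, f^n_0\bigr),
\end{equation*}
initially for smooth $g^n$, and extended to $L^2$-initial data by the same approximation used inside the proof of Theorem \ref{thmEnergyEstimate}. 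The right-hand side converges to $(g^2, f_0)$ by dominated convergence (via $g^n \to g$ in $L^2$ and the uniform bound on $f^n_0$), and $\|v^n_T\|_0^2 \to \|v_T\|_0^2$ by Theorem \ref{thm:L2cont}, so the weighted quadratic form on the left converges to its analogue with $(v, f)$.

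To upgrade this to strong convergence of $\nabla v^n$, I set $a^n_r(x) := (\sigma\sigma^{\top})^{1/2}(x)\sqrt{f^n_r(x)}\, \nabla v^n_r(x)$ and $a$ analogously. The uniformly bounded multipliers $(\sigma\sigma^{\top})^{1/2}\sqrt{f^n}$ converge pointwise to $(\sigma\sigma^{\top})^{1/2}\sqrt{f}$, which combined with $\nabla v^n \rightharpoonup \nabla v$ yields $a^n \rightharpoonup a$ weakly in $L^2([0,T] \times \R^d)$; the preceding display is exactly $\|a^n\|_{L^2}^2 \to \|a\|_{L^2}^2$, so the Radon-Riesz property of Hilbert spaces forces $a^n \to a$ strongly in $L^2$. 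The non-degeneracy \eqref{diffusionNonDegeneracy} and the uniform lower bound on $f^n$ then give $\nabla v^n \to \nabla v$ strongly in $L^2([0,T]; L^2)$ via the decomposition
\begin{equation*}
\nabla v^n - \nabla v = (f^n)^{-1/2}\bigl(\sqrt{f^n}\,\nabla v^n - \sqrt{f}\,\nabla v\bigr) + \sqrt{f}\,\nabla v\,\bigl((f^n)^{-1/2} - f^{-1/2}\bigr),
\end{equation*}
whose first summand is bounded by a constant times $\|a^n - a\|_{L^2}$ and whose second vanishes by dominated convergence. The step I expect to be the main obstacle is the uniform two-sided control of $f^n$ together with its pointwise stability in $\WW^n$, which requires the Feynman-Kac construction from Section 4 with explicit uniformity on bounded rough-path balls rather than for a single fixed $\WW$; once this uniformity is in place, the energy identity plus the Radon-Riesz trick cleanly replaces the compactness shortcut used in \cite[Thm. 2]{HH}.
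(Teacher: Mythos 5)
Your proposal is correct in outline and follows a genuinely different route from the paper. The paper's proof of Theorem~\ref{thm:H1cont} is a direct analysis of the Feynman--Kac representation $\nabla u_t(x) = E[\nabla g(\Phi_{t,T}(x))\nabla\Phi_{t,T}(x)]$: for compactly supported $C^1$ data it combines pointwise rough-path stability of the flow and its Jacobian with the Lyapunov domination $E[\exp\{-|\Phi_{t,T}(x)|\}] \lesssim e^{-|x|}$ from Lemma~\ref{Lyaponov} to justify dominated convergence in $L^2(\R^d)$, and then extends to $g\in L^2$ by the linearity of $g\mapsto u$ together with the $\WW$-uniform energy estimate of Theorem~\ref{thmEnergyEstimate}. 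You instead treat only $v$ (not $\nabla v$) through the $L^2$-stability of Theorem~\ref{thm:L2cont}, obtain weak convergence of $\nabla v^n$ from the uniform energy bound, and upgrade it to strong convergence via the energy \emph{identity} of Proposition~\ref{propEnergyEstimate} and the Radon--Riesz property of the weighted inner product $(\cdot,f^n\cdot)$. This buys you a proof that never touches the Feynman--Kac representation of the gradient directly; it only uses the scalar weight $f^n$ and the abstract energy structure, so it would survive in a setting where the gradient does not admit such a clean probabilistic formula. The paper's route is shorter here because the FK formula for $\nabla u$ is available and the Lyapunov trick does the domination in one stroke.

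The ingredients you flag as incomplete are exactly the ones that still need to be checked. First, the energy identity $\|v_T\|_0^2 + \int_0^T (\sigma\sigma^\top\nabla v_r, \nabla v_r f_r)\,dr = (g^2, f_0)$ is proved in the paper only for smooth $g$; extending it to $g\in L^2$ requires noting that for fixed $\WW$ the approximating sequence from the proof of Theorem~\ref{thmEnergyEstimate} converges strongly in $L^2([0,T];H^1)$, so one can pass to the limit in the integral term. Second, and more substantially, you need (i) the two-sided bound $m^{-1}\le f^n \le m$ with a constant $m$ uniform over bounded $\mathscr{C}_g^\alpha$-balls, and (ii) a.e.\ pointwise convergence $f^n_r(x)\to f_r(x)$ as $\WW^n\to\WW$. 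Neither is stated explicitly in the paper, though both are extractable from the Feynman--Kac representation of $f$ together with Propositions~\ref{summary}--\ref{summary2} (Gaussian tails of $N_{[0,T]}(\ZZ)$ uniform over bounded rough-path sets) and the rough-path stability of the underlying hybrid RDE. Since these are precisely the pieces of infrastructure the paper already builds, your approach is a legitimate and arguably more robust alternative once those two lemmas on $f^n$ are written out; without them the Radon--Riesz step has no object to act on.
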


\begin{proof}

The proof is similar to the proof of Theorem \ref{thm:L2cont}, except we will use the energy estimates \eqref{energyEstimate} instead of \eqref{InitialLinear}.

Step 1: we first fix $g \in C_c^1(\R^d)$ and show the rough path stability $\WW \mapsto u$.
We have
$$
\nabla u_t(x) = E[ \nabla g( \Phi_{t,T}(x)) \nabla \Phi_{t,T}(x) ]
$$

By the rough path stability $ \WW \mapsto \Phi_{t,T}(x)$ and $\WW \mapsto \nabla \Phi_{t,T}(x)$ we have that 
$$
\lim_{ \WW \rightarrow \tilde{\WW}} \sup_{t}  ( E[ \nabla g( \Phi_{t,T}(x)) \nabla \Phi_{t,T}(x) ] - E[ \nabla g( \tilde{\Phi}_{t,T}(x)) \nabla \tilde{\Phi}_{t,T}(x) ] )^2   = 0 .
$$
Thus, to show that 
$$
\lim_{ \WW \rightarrow \tilde{\WW}} \int_0^T \int_{\R^d}  ( E[ \nabla g( \Phi_{t,T}(x)) \nabla \Phi_{t,T}(x) ] - E[ \nabla g( \tilde{\Phi}_{t,T}(x)) \nabla \tilde{\Phi}_{t,T}(x) ] )^2dx dt  = 0
$$
it is enough to show that we may use dominated convergence. Since $g$ has compact support, it is clear that $e^{ |x|} |\nabla g(x)|^2$ is uniformly bounded in $x$. We get
\begin{align*}
E[ \nabla g(\Phi_{t,T}(x)) \nabla \Phi_{t,T}(x) ]^2 & \leq E[ |\nabla g(\Phi_{t,T}(x)) |^2] E[ |\nabla \Phi_{t,T}(x)|^2 ] \\
 & \leq E[ |\nabla g(\Phi_{t,T}(x)) |^2 \exp\{  |\Phi_{t,T}(x)| \} \exp\{ -  |\Phi_{t,T}(x)| \} ] E[ |\nabla \Phi_{t,T}(x)|^2 ]\\
& \leq \| e^{ | \cdot |} |\nabla g( \cdot )|^2 \|_{\infty} E[\exp\{ -  |\Phi_{t,T}(x)| \} ] E[ |\nabla \Phi_{t,T}(x)|^2 ]\\
& \leq \| e^{ | \cdot |} |\nabla g( \cdot )|^2 \|_{\infty} C \exp\{ -  |x| \} .
\end{align*}

Step 2: Let $g,\tilde{g} \in L^2(\R^d)$ and $\epsilon > 0$. Choose $g_{\epsilon},\tilde{g}_{\epsilon} \in C_c^1(\R^d)$ as in the proof of Theorem \ref{thm:L2cont}. Define the functions 
$$
u_t^{\epsilon}(x) = E[g_{\epsilon}(\Phi_{t,T}(x))],  \quad \tilde{u}_t^{\epsilon}(x) = E[\tilde{g}_{\epsilon}(\Phi_{t,T}(x))]   \quad \textrm{and} \quad  v^{\epsilon}_t(x) = E[g_{\epsilon}(\tilde{\Phi}_{t,T}(x))] .
$$
Since the equation is linear, we get from Theorem \ref{thmEnergyEstimate} that 
$$
\int_0^T \| \nabla  u_t -  \nabla u_t^{\epsilon} \|_{0}^2 dt \leq C \epsilon^2 , \quad \int_0^T \| \nabla  \tilde{u}_t -  \nabla \tilde{u}_t^{\epsilon} \|_{0}^2 dt \leq C \epsilon^2
$$
and
$$
\int_0^T \| \nabla  v^{\epsilon}_t -  \nabla \tilde{u}_t^{\epsilon} \|_{0}^2 dt \leq C \|g - \tilde{g}\|_{0}^2.
$$
This gives
\begin{align*}
\lim_{ \WW \rightarrow \tilde{\WW}} \int_0^T &  \| \nabla  u_t -  \nabla \tilde{u}_t \|_{0}^2 dt  \lesssim \lim_{ \WW \rightarrow \tilde{\WW}}   \int_0^T \| \nabla  u_t -  \nabla u_t^{\epsilon} \|_{0}^2 dt  + \lim_{ \WW \rightarrow \tilde{\WW}}  \int_0^T \| \nabla u_t^{\epsilon} -  \nabla v_t^{\epsilon} \|_{0}^2 dt \\
 &  + \lim_{ \WW \rightarrow \tilde{\WW}} \int_0^T \| \nabla v_t^{\epsilon} -  \nabla \tilde{u}_t^{\epsilon} \|_{0}^2 dt + \lim_{ \WW \rightarrow \tilde{\WW}}  \int_0^T \| \nabla \tilde{u}_t^{\epsilon} -  \nabla \tilde{u}_t \|_{0}^2 dt \\
  & \leq 2 C \epsilon^2 + C \|g - \tilde{g} \|_{0}^2.
\end{align*}

Since $\epsilon $ was arbitrary, the result follows.
\end{proof}

\begin{remark}
We note from the proofs of Theorem \ref{thm:L2cont} and Theorem \ref{thm:H1cont} that we have a Lipschitz-type continuity in the initial condition $g$. 
\end{remark}

\section{Semi-linear perturbation}

In this section we study the semi-linear equation
\begin{equation} \label{eq:non-linear}
du_t = \left( Lu_t  + F( u_t) \right] dt  + \Gamma^i u_t d\WW^i_t ,  \quad u_0  = g \in L^2(\R^d)
\end{equation}
for some non-linearity $F: H^1 \rightarrow L^2(\R^d)$. 
A solution to \eqref{eq:non-linear} is defined in a similar way as the analytically weak solution in Definition \ref{def:strongForwardSolutionRough}, except we need more regularity on the solution to make sense of $F(u)$.  

\begin{definition}
We say that $u \in C([0,T]; L^2(\R^d)) \cap L^2([0,T]; H^1)$ is an analytically weak solution to \eqref{eq:non-linear} if $(\Gamma^i u, \Gamma^i \Gamma^j u)$ is controlled by $W$ in $H^{-3}$ and for all $\phi \in H^3$ we have
$$
(u_t, \phi) = (u_0, \phi) + \int_0^t (u_s, L^* \phi)  + (F(u_s), \phi) ds + \int_0^t (u_s, \Gamma^{i,*} \phi) d\WW_s^i .
$$
\end{definition}

The rest of this section is devoted to proving the following well-posedness result.

\begin{theorem} \label{thm:Non-linearWellPosed}
Suppose $\sigma_{i,k}, \gamma_j , \beta_j^n \in C_b^6(\R^d)$, $b_j,c \in C_b^4(\R^d)$ as well as the non degeneracy condition
$$
\lambda |\xi|^2 \leq \sigma_{i,k} \sigma_{j,k} \xi_j \xi_i .
$$
Assume the non linearity is Lipschitz, from $H^1$ to $L^2(\R^d)$, i.e. 
\begin{equation} \label{FConditions}
\| F(u)  - F(v) \|_0 \lesssim \|u - v\|_1, \qquad \forall u,v \in H^1 .
\end{equation}
Then there exists a unique solution to \eqref{eq:non-linear}.
\end{theorem}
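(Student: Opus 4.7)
The strategy, as signalled in the introduction, is to rewrite \eqref{eq:non-linear} as a mild equation against the two-parameter linear rough semigroup $P_{st}^{\WW}$ built in Sections 3--4, solve this mild equation by Banach's fixed-point theorem, and then invoke the product formula, Lemma \ref{prodFormula}, to show that any analytically weak solution of \eqref{eq:non-linear} is also a mild solution. Concretely, define $P_{st}^{\WW} g$ as the value at time $t$ of the forward analytically weak linear solution of $dv_r = Lv_r dr + \Gamma^i v_r d\WW^i_r$ with $v_s = g$, as given by Theorem \ref{existenceThm}; the energy estimate of Theorem \ref{thmEnergyEstimate} together with Theorem \ref{Uniqueness} yield
$$
\sup_{t \in [s,T]} \|P_{st}^{\WW} g\|_{0}^2 + \int_s^T \|\nabla P_{sr}^{\WW} g\|_{0}^2 dr \leq C \|g\|_0^2,
$$
with $C$ uniform on bounded sets of $\WW$, together with the flow property $P_{rt}^{\WW} \circ P_{sr}^{\WW} = P_{st}^{\WW}$.

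On the Banach space $E_\tau := C([0,\tau]; L^2(\R^d)) \cap L^2([0,\tau]; H^1)$ consider the Picard map
$$
\Phi(u)_t := P_{0t}^{\WW} g + \int_0^t P_{st}^{\WW} F(u_s) ds.
$$
Combining the energy inequality, the Lipschitz assumption \eqref{FConditions}, and Cauchy--Schwarz in time yields an estimate of the form $\|\Phi(u) - \Phi(v)\|_{E_\tau} \leq C(\WW) \sqrt{\tau} \, \|u - v\|_{E_\tau}$, so $\Phi$ is a contraction for $\tau$ small; iterating on a partition of $[0,T]$ produces a unique mild solution $u \in E_T$, with norm controlled by $\|g\|_0$, the Lipschitz constant of $F$, $\|F(0)\|_0$, and bounded sets of $\WW$.

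For the weak-implies-mild step, fix $\tau \in (0,T]$ and a test function $\phi \in H^6$, and let $v_s := P_{s\tau}^{\WW,*}\phi$ be the regular backward RPDE solution of $-dv = L^*v ds + \Gamma^{i,*}v d\WW^i$ with $v_\tau = \phi$ furnished by Proposition \ref{existenceSpaceSmooth}, whose boosted regularity is precisely what is needed for $v$ to be controlled by $W$ in a space in duality with $u \in C([0,T]; L^2)$. Applying Lemma \ref{prodFormula} to the pair $(u, v)$, the linear rough-integrand
$$
M_s^i = (\Gamma^i u_s, v_s) - (u_s, \Gamma^{i,*}v_s)
$$
vanishes by the very definition of $\Gamma^{i,*}$, and the quadratic rough-integrand $(M_s')^{j,i}$ vanishes because $\WW$ is geometric: the cross term $2(\Gamma^i u_s, \Gamma^{j,*}v_s)$ symmetrises with $(\Gamma^j \Gamma^i u_s, v_s)$ and $(u_s, \Gamma^{i,*}\Gamma^{j,*}v_s)$ to zero. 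Only the bounded-variation terms survive, and one reads off
$$
(u_\tau, \phi) = (u_0, v_0) + \int_0^\tau (F(u_s), v_s) ds = (P_{0\tau}^{\WW} u_0, \phi) + \int_0^\tau (P_{s\tau}^{\WW} F(u_s), \phi) ds,
$$
after which density of $H^6$ in $L^2(\R^d)$ yields the mild equation as an $L^2$-identity.

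Uniqueness of analytically weak solutions is then immediate from the uniqueness of the mild solution. For existence, approximate $\WW$ by smooth geometric lifts $\WW^n \to \WW$ in $\mathscr{C}_g^\alpha$, so that classical parabolic theory yields weak solutions $u^n$ of \eqref{eq:non-linear} driven by $\WW^n$; the linear energy estimate and the Lipschitz control on $F$ bound $(u^n)$ uniformly in $E_T$, and Theorems \ref{thm:L2cont} and \ref{thm:H1cont}, applied to each step of the mild Picard iteration and combined with the Lipschitz property of $F$, let us pass to the limit to obtain a weak (in fact mild) solution of \eqref{eq:non-linear} driven by $\WW$. The main obstacle is the weak-implies-mild passage: one must verify that the backward test $v = P_{\cdot\tau}^{\WW,*}\phi$ carries enough spatial regularity and $W$-controlled structure for the pairings in Lemma \ref{prodFormula} to be well-defined when $u$ is merely in $C([0,T]; L^2) \cap L^2([0,T]; H^1)$; this is exactly what Proposition \ref{existenceSpaceSmooth} was crafted to provide, and also explains the caveat in the introduction that the converse (mild-to-weak) is not established here.
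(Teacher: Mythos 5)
Your overall architecture coincides with the paper's: well-posedness of the mild equation \eqref{eq:MildFormulation} by a fixed point in $C([0,T];L^2(\R^d))\cap L^2([0,T];H^1)$ using the energy estimate of Theorem \ref{thmEnergyEstimate} (Proposition \ref{thm:MildWellPosedness}), the implication weak $\Rightarrow$ mild via the product formula Lemma \ref{prodFormula} tested against the regular backward solutions of Proposition \ref{existenceSpaceSmooth} (Proposition \ref{thm:VariationalUniqueness}), and uniqueness of \eqref{eq:non-linear} as an immediate consequence. Those parts are fine.

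The genuine gap is in your existence step. You approximate $\WW$ by smooth lifts, obtain solutions $u^n$, show (via Theorems \ref{thm:L2cont} and \ref{thm:H1cont} fed through the Picard iteration) that $u^n$ converges in $C([0,T];L^2)\cap L^2([0,T];H^1)$ to the mild solution $u$ for $\WW$, and then assert that this limit is "a weak (in fact mild) solution". But identifying the limit as an \emph{analytically weak} solution of \eqref{eq:non-linear} is exactly the point at issue: it would follow from the implication mild $\Rightarrow$ weak, which the paper explicitly cannot prove in the H\"older/controlled framework — the Duhamel term $\int_s^t P^{\WW}_{rt}F(u_r)\,dr$ is only $O(|t-s|^{1/2})$, see \eqref{p-variationNeeded} and Remark \ref{rmk:p-variation}, which is not a $2\alpha$-remainder since $2\alpha>2/3$. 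Alternatively one must pass to the limit directly in the weak formulation satisfied by $u^n$; for that you need uniform controlled-path bounds on $(\Gamma^i u^n,\Gamma^i\Gamma^j u^n)$ relative to $W^n$ in $H^{-3}$ and convergence of the rough-integral terms, and none of this follows from strong convergence in $C([0,T];L^2)\cap L^2([0,T];H^1)$ alone. The paper's Proposition \ref{thm:VariationalExistence} handles this differently: from the mild expansion \eqref{eq:mildExpansion} it extracts the uniform bound \eqref{uniformHolder} in $C^{1/2}([0,T];H^{-1})$, uses the compact embedding into $L^2([0,T];L^2(K))\cap C([0,T];L^2(K)_w)$, and then takes the limit in the rough weak equation along the lines of \cite[Theorem 4.2]{HNS}, which is where the controlled structure of the limit is actually produced. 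Your proposal contains no substitute for this step, so as written the existence half is incomplete (your rough-path-continuity argument does yield continuity of the mild solution map in $\WW$, which is more than the paper's compactness gives, but it does not by itself produce an analytically weak solution).
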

Uniqueness and existence will be proven separately, in Proposition \ref{thm:VariationalUniqueness} and Proposition \ref{thm:VariationalExistence}, respectively.

The proof will be based on Duhamel's principle, i.e. in the mild formulation of the equation. The twist is that we will consider the two parameter semi-group generated by the diffusive {\em and} the rough terms. More precisely, if we denote by $P_{s \cdot}^{\WW}$ the operator 
\begin{align*}
L^2(\R^d) & \rightarrow C([s,T]; L^2(\R^d)) \cap L^2([s,T]; H^1) \\
 g & \mapsto v
\end{align*}
where $v$ denotes the solution to \eqref{eq:non-linear} with $F =0$ starting at time $s$ in $g$. The Duhamel's principle in the classical setting then tells us that an equivalent formulation of \eqref{eq:non-linear} is given by
\begin{equation} \label{eq:MildFormulation}
u_t = P_{0 t}^{\WW} g + \int_0^t P_{st}^{\WW} F(u_s) ds
\end{equation}
in $L^2(\R^d)$. As soon as $P^{\WW}$ is defined, the notion of solution to the above equation, as well as its well-posedness, will be standard. 

\begin{definition}
A mapping $u \in C([0,T]; L^2(\R^d))$ is a solution to \eqref{eq:MildFormulation} provided $u \in  L^2([0,T]; H^1)$ and the equality \eqref{eq:MildFormulation} holds in $C([0,T]; L^2(\R^d))$.
\end{definition}

We start by showing well-posedness of \eqref{eq:MildFormulation}.

\begin{proposition} \label{thm:MildWellPosedness}
Retain the assumptions of Theorem \ref{thm:Non-linearWellPosed}. Then there exists a unique solution of the mild formulation, equation \eqref{eq:MildFormulation}.
\end{proposition}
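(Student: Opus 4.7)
My proposal is a Banach fixed point for the mild operator
$$
\mathcal{T}(u)_t := P_{0t}^{\WW} g + \int_0^t P_{st}^{\WW} F(u_s)\,ds
$$
on $X_T := C([0,T]; L^2(\R^d)) \cap L^2([0,T]; H^1)$, but equipped with an \emph{exponentially weighted} norm. The weight is forced on us by the fact that $F$ is Lipschitz from $H^1$ into $L^2$: since $\|F(u)-F(v)\|_0$ controls $u-v$ only in the full $H^1$-norm (not just the weaker $L^2$-norm), no plain short-time argument in the unweighted norm gives a strict contraction. On the other hand, well-definedness of $\mathcal{T}$ valued in $C([0,T];L^2)$ is immediate from \eqref{FConditions} and the uniform $L^2\to L^2$ bound on $P_{st}^{\WW}$ provided by Theorem \ref{thmEnergyEstimate}.

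The key technical step is an inhomogeneous analogue of Theorem \ref{thmEnergyEstimate}: if $w$ solves $dw_t = (Lw_t + F_t)dt + \Gamma^i w_t\,d\WW^i_t$ with $w_0 = 0$ and $F \in L^2([0,T]; L^2(\R^d))$, then
$$
\sup_t \|w_t\|_0^2 + \int_0^T \|\nabla w_s\|_0^2\,ds \leq C \int_0^T \|F_s\|_0^2\,ds .
$$
I would prove this by repeating the strategy of Proposition \ref{propEnergyEstimate} for smooth data: first apply Lemma \ref{prodFormula} to obtain the forced analogue of Lemma \ref{SquaredEquation} (which now carries an additional $2(F_t, \phi w_t)\,dt$ on the drift side), then pair with the backward weight $f$ of \eqref{energyRPDE} via Lemma \ref{prodFormula} once more. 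The rough integrals cancel verbatim by the very choice of $f$, leaving
$$
d(w_t^2, f_t) = -(\sigma_{i,k}\sigma_{j,k}\partial_j w_t, \partial_i w_t\, f_t)\,dt + 2(F_t\, w_t, f_t)\,dt ,
$$
from which Young, the bounds $m^{-1}\leq f\leq m$, \eqref{diffusionNonDegeneracy} and Gronwall yield the estimate; density and the rough-path stability extend it to general $w_0$ and $F$. Multiplying the same identity by $e^{-\beta t}$ before integrating, and balancing the Young term against the extra $-\beta e^{-\beta t}(w_t^2, f_t)\,dt$, produces the weighted refinement
$$
\int_0^T e^{-\beta t}\|w_t\|_1^2\,dt \leq \frac{C}{\beta}\int_0^T e^{-\beta t}\|F_t\|_0^2\,dt
$$
for $\beta$ large. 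Applied to $w := \mathcal{T}(u)-\mathcal{T}(v)$ with $F_t = F(u_t)-F(v_t)$ and using \eqref{FConditions}, this becomes $\|\mathcal{T}(u)-\mathcal{T}(v)\|_\beta^2 \leq (CL^2/\beta)\|u-v\|_\beta^2$, a strict contraction on $L^2([0,T];H^1)$ with the $\beta$-weighted norm once $\beta > CL^2$. Banach's theorem then delivers the unique fixed point $u\in L^2([0,T];H^1)$, and the $C([0,T];L^2)$-regularity is a byproduct of applying the inhomogeneous estimate once more to $u$ itself with $F(u)$ treated as a given $L^2$-forcing.

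The hard part is the first application of Lemma \ref{prodFormula}: to square $w$ one must first check that $w$ is an analytically weak solution of the rough PDE and that $(\Gamma^i w, \Gamma^i\Gamma^j w)$ is controlled by $W$ in an appropriate Sobolev space, so that the machinery behind Lemma \ref{SquaredEquation} applies in the inhomogeneous setting. Once this structural input is granted, all remaining arguments (the second application of Lemma \ref{prodFormula} with the weight $f$, the Young/Gronwall chain, and the weighted refinement) are direct adaptations of Proposition \ref{propEnergyEstimate}, and the uniformity of $C$ in bounded sets of $\WW$ asserted in Theorem \ref{thmEnergyEstimate} ensures that the contraction rate does not degenerate with the rough path.
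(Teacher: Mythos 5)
Your plan has a genuine gap at exactly the step you flag as ``the hard part.'' To run the inhomogeneous energy estimate on $w=\mathcal{T}(u)-\mathcal{T}(v)$ you must first know that the Duhamel integral $\int_0^t P_{st}^{\WW}F_s\,ds$ is an analytically weak solution of the forced rough PDE with $(\Gamma^i w,\Gamma^i\Gamma^j w)$ controlled by $W$; this is the ``mild $\Rightarrow$ weak'' direction, which is precisely what cannot be established in the H\"older framework of the paper. The obstruction is quantified in \eqref{p-variationNeeded} and Remark \ref{rmk:p-variation}: the term $\int_s^t P_{rt}^{\WW}F(u_r)\,dr$ appearing in the expansion of a mild solution is only $O(|t-s|^{1/2})$, which is not $O(|t-s|^{2\alpha})$ for $\alpha\in(1/3,1/2]$, so the mild object is not known to be controlled by $W$ and neither Lemma \ref{prodFormula} nor the machinery behind Lemma \ref{SquaredEquation} can be applied to it. Granting this ``structural input'' is therefore not a routine verification but the very ingredient the authors single out as missing (and the reason they prove existence of variational solutions by compactness rather than by showing mild $\Rightarrow$ weak); without it your key inhomogeneous estimate, the weighted refinement built on it, and the final bootstrap giving $C([0,T];L^2(\R^d))$-regularity of the fixed point are all unsupported.

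Moreover, the premise that pushes you onto this route is not correct: a plain short-time contraction in the unweighted norm does work, and this is what the paper does. One applies Cauchy--Schwarz in $s$ and Fubini to $\nabla\int_0^t P_{st}^{\WW}\bigl(F(u_s)-F(v_s)\bigr)\,ds$ and then uses the linear energy estimate of Theorem \ref{thmEnergyEstimate} for each fixed $s$, namely $\int_s^T\|\nabla P_{st}^{\WW}h\|_0^2\,dt\lesssim\|h\|_0^2$, to obtain $\|\Xi(u)-\Xi(v)\|_{\mathcal{X}}\lesssim T^{1/2}\|u-v\|_{\mathcal{X}}$ on $\mathcal{X}=C([0,T];L^2(\R^d))\cap L^2([0,T];H^1)$; the $H^1$-norm demanded by \eqref{FConditions} is recovered from the smoothing of the linear propagator alone, with no new rough PDE estimate, and the solution is extended to all of $[0,T]$ by iteration. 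If you wish to keep a weighted norm (to avoid restarting in time), derive the weighted contraction directly from this semigroup bound on the Duhamel integral, not from an energy identity for the squared mild solution.
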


\begin{proof}
We set up a contraction mapping on the space
$$
\mathcal{X} = C([0,T]; L^2(\R^d)) \cap L^2([0,T]; H^1),
$$
via $\Xi :\mathcal{X}  \rightarrow \mathcal{X}$ as
$$
\Xi(u)_t = P_{0t}^{\WW} u_0 + \int_0^t P_{st}^{\WW} F(  u_s) ds,
$$
where the initial condition $u_0$ is fixed. 
We first show that $\Xi$ is well defined. From Theorem \ref{thmEnergyEstimate} we get $P_{0 \cdot}^{\WW} u_0 \in \mathcal{X}$.
Moreover, it is clear that we have $\int_0^{\cdot} P_{s \cdot }^{\WW} F( u_s) ds \in C([0,T]; L^2(\R^d))$, and we have the bound
\begin{align*}
\sup_{t \in [0,T] } \left\| \int_0^{t} P_{s t }^{\WW} F( u_s) ds \right\|_0 & \leq \sup_{t \in [0,T] }  \int_0^{t} \left\| P_{s t }^{\WW} F( u_s) \right\|_0 ds \lesssim  \sup_{t \in [0,T] }  \int_0^{t} \left\| F(u_s) \right\|_0 ds \\
 &  \leq T^{1/2} \left( \int_0^{T} \left\| F(u_s) \right\|^2_0 ds \right)^{1/2} \leq  T^{1/2} \left( \int_0^{T} (1 +  \left\| u_s \right\|_1^2 ) ds\right)^{1/2}
\end{align*}

To see that $\Xi$ also takes values in $L^2([0,T]; H^1)$, consider
\begin{align*}
\int_0^T \left\| \nabla \int_0^t P_{st}^{\WW} F(  u_s) ds \right\|_0^2 dt & \leq \int_0^T t \int_0^t \| \nabla  P_{st}^{\WW} F(u_s)\|_0^2  ds  dt  \leq T \int_0^T  \int_t^T \| \nabla  P_{st}^{\WW} F(u_s)\|_0^2  dt  ds \\
 & \lesssim T \int_0^T   \|  F( u_s)\|_0^2  dt  \lesssim T( 1 + \int_0^T \|u_s \|_1^2 ds) 
\end{align*}

Finally, to see that $\Xi$ is a contraction, we write for $u, v \in \mathcal{X}$ using similar computations as above
\begin{align*}
\sup_{t \in [0,T] } \left\| \int_0^{t} P_{s t }^{\WW} ( F(  u_s) - F(v_s)) ds   \right\|_0 & 
   \lesssim   T^{1/2} \left( \int_0^{T} \left\| u_s  - v_s \right\|_1^2 ) ds\right)^{1/2}
\end{align*}
and 
\begin{align*}
\int_0^T \left\| \nabla \int_0^t P_{st}^{\WW} ( F( u_s) - F( v_s))  ds \right\|_0^2 dt  & \lesssim T \int_0^T   \|  F( u_s) - F( v_s) \|_0^2  dt  \lesssim T  \int_0^T \|u_s  - v_s\|_1^2 ds 
\end{align*}
which shows that 
$$
\| \Xi(u) - \Xi(v) \|_\mathcal{X} \lesssim T^{1/2} \| u - v \|_\mathcal{X} .
$$
It follows that $\Xi$ is a contraction mapping with a unique fix point provided $T$ is small enough. It is now standard to extend the solution on a general time interval $[0,T]$ by iteration. 
\end{proof}

We go on to show that a variational solution, \eqref{eq:non-linear}, is a solution to \eqref{eq:MildFormulation}.

\begin{proposition} \label{thm:VariationalUniqueness}
Assume $\sigma_{i,k}, \gamma_j , \beta_j^n \in C_b^6(\R^d)$, $b_j,c \in C_b^4(\R^d)$. Then a solution to \eqref{eq:non-linear} is a solution to \eqref{eq:MildFormulation}. In particular, using Proposition \ref{thm:MildWellPosedness}, solutions to \eqref{eq:non-linear} are unique. 
\end{proposition}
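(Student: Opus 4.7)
The plan is to test the variational equation against a backward-evolved test function and cancel the rough contributions via the product formula, exactly as in the uniqueness argument for the linear problem (Theorem \ref{Uniqueness}).

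First, fix $t \in [0,T]$ and a test function $\varphi \in W^{6,2}(\R^d)$, which is dense in $L^2(\R^d)$. Let $\phi = (\phi_r)_{r \in [0,t]}$ be the regular backward solution to the adjoint rough PDE
\begin{equation*}
-d\phi_r = L^* \phi_r\, dr + \Gamma^{i,*} \phi_r\, d\WW^i_r, \qquad \phi_t = \varphi,
\end{equation*}
obtained from Proposition \ref{existenceSpaceSmooth} applied to the coefficients $(L^*, \Gamma^*)$; the formulas \eqref{dualNotation} show that the adjoint coefficients $\tilde b, \tilde c, \tilde\gamma$ inherit the regularity hypotheses of Theorem \ref{thm:Non-linearWellPosed}. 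By that proposition, $\phi \in C([0,t]; W^{3,2}(\R^d))$ and is controlled by $W$ in $W^{3,2}(\R^d)$.

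Next, apply the product formula, Lemma \ref{prodFormula}, to the pairing $(u_r, \phi_r)$ with $E = H^3$ and $E^* = H^{-3}$. The forward solution $u$ takes values in $L^2 \hookrightarrow H^{-3}$, is controlled by $W$ in $H^{-3}$ by the definition of analytically weak solution, and has drift $A_r = Lu_r + F(u_r)$ and rough integrand $B_r^i = \Gamma^i u_r$ (with Gubinelli derivative $\Gamma^i \Gamma^j u_r$). The bounded-variation contribution is
\begin{equation*}
\int_0^t \bigl[(Lu_r + F(u_r), \phi_r) - (u_r, L^* \phi_r)\bigr]\, dr = \int_0^t (F(u_r), \phi_r)\, dr,
\end{equation*}
while the first-order rough term vanishes pointwise in $r$,
\begin{equation*}
M_r^j = (\Gamma^j u_r, \phi_r) + (u_r, -\Gamma^{j,*}\phi_r) = 0,
\end{equation*}
and the symmetrized second-order term $(M_r')^{j,i}$ vanishes by the same adjoint manipulations as in the proof of Theorem \ref{Uniqueness}, using that $\WW$ is geometric. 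Hence the rough integral $\int_0^t (M_r, M_r')\, d\WW_r$ is identically zero, and we obtain
\begin{equation*}
(u_t, \varphi) = (u_0, \phi_0) + \int_0^t (F(u_r), \phi_r)\, dr.
\end{equation*}

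The last step is to identify $\phi_r$ with the adjoint action of $P_{rt}^\WW$. Given $h \in L^2(\R^d)$, let $w^h$ be the linear forward solution on $[r,t]$ with $w^h_r = h$, so that $w^h_t = P_{rt}^\WW h$. Applying Lemma \ref{prodFormula} to $(w^h, \phi)$ on $[r,t]$ — exactly as in Theorem \ref{Uniqueness} — both the drift and the rough integral cancel, yielding $(w^h_r, \phi_r) = (w^h_t, \phi_t)$, i.e.
\begin{equation*}
(h, \phi_r) = (P_{rt}^\WW h, \varphi) \quad \text{for every } h \in L^2(\R^d).
\end{equation*}
Substituting $h = u_0$ in the first term and $h = F(u_r)$ inside the integral (the latter justified by a Fubini-type argument using continuity of $r \mapsto P_{rt}^\WW F(u_r)$ in $L^2$) gives
\begin{equation*}
(u_t, \varphi) = (P_{0t}^\WW u_0, \varphi) + \int_0^t (P_{rt}^\WW F(u_r), \varphi)\, dr,
\end{equation*}
and density of $W^{6,2}(\R^d)$ in $L^2(\R^d)$ promotes this to the mild identity \eqref{eq:MildFormulation}. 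Uniqueness for \eqref{eq:non-linear} then follows directly from Proposition \ref{thm:MildWellPosedness}.

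The principal obstacle is the verification of the controlled-path hypotheses of Lemma \ref{prodFormula} in the dual pair $(H^{-3}, H^3)$ — specifically checking that $u$ (whose rough integrand $\Gamma^i u$ lies only in $H^{-1}$) and $\phi$ (whose regularity is supplied by Proposition \ref{existenceSpaceSmooth}) together satisfy the controlled hypotheses in these spaces, and that the Fubini interchange needed to bring $P_{rt}^\WW$ inside the Bochner integral is legitimate; this is exactly the reason the backward solution $\phi$ must be produced from $W^{6,2}$ data rather than merely $W^{3,2}$ data.
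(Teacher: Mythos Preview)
Your proposal is correct and follows essentially the same route as the paper: test the variational solution against the regular backward solution of the adjoint rough PDE (with smooth terminal data), apply Lemma~\ref{prodFormula} so that the rough terms cancel, and then identify the backward solution with the adjoint semigroup to arrive at \eqref{eq:MildFormulation}. The only cosmetic differences are that the paper takes $\varphi \in C^\infty_c(\R^d)$ rather than $W^{6,2}(\R^d)$, and simply asserts $\phi_r = (P_{rt}^{\WW})^*\varphi$ in place of your explicit verification of $(h,\phi_r) = (P_{rt}^{\WW}h,\varphi)$ via a second application of the product formula.
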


\begin{proof}
With Lemma \ref{prodFormula} this is straightforward. Assume $u$ is a solution to \eqref{eq:non-linear}. Let $\phi \in C^{\infty}_c(\R^d)$ and denote by $v$ the backward solution to
$$
- d v_s = L^*v_s ds + \Gamma^{j,*} v_s d\WW_s^j, \quad v_t = \phi.
$$
From Lemma \ref{prodFormula}, similarly as in the proof of Theorem \ref{Uniqueness}, we get
$$
(u_t , v_t) = (u_0, v_0)  + \int_0^t  (F(\nabla u_s), v_s) ds
$$
and by noticing that we may write $v_s = P_{st}^{\WW,*} \phi$ where $P_{st}^{\WW,*}$ is the adjoint of $P_{st}^{\WW}$, we get
\begin{align*}
(u_t , \phi) & = (u_0, P_{0 t}^{\WW,*} \phi)  + \int_0^t  (F( u_s), P_{st}^{\WW,*} \phi) ds \\
 & = ( P_{0,t}^{\WW} u_0,  \phi)  + \int_0^t  ( P_{st}^{\WW} F(u_s),  \phi) ds .
\end{align*}
Since $\phi$ was arbitrary in $C^{\infty}_c(\R^d)$ the result follows. 
\end{proof}

Note that even though we do not prove equivalence of the two definitions, the one implication is enough to show well-posedness of \eqref{eq:non-linear}. Indeed, since we have shown that \eqref{eq:non-linear} implies \eqref{eq:MildFormulation} this immediately gives uniqueness of \eqref{eq:non-linear}. 

To show existence of a solution to \eqref{eq:non-linear} we argue by rough path continuity. For a smooth path the formulations are classically equivalent. To take the limit in the approximation we shall use a compactness criterion also used in \cite{HNS}. The computations below will show precisely the missing ingredient to show that the definitions are equivalent and also hint at how this could be solved, see Remark \ref{rmk:p-variation}.

\begin{proposition} \label{thm:VariationalExistence}
Under the assumptions of Theorem \ref{thm:Non-linearWellPosed} there exists a solution to \eqref{eq:non-linear}.
\end{proposition}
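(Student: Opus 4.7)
The plan is to obtain existence by a smooth-path approximation argument, leveraging the mild formulation \eqref{eq:MildFormulation} as the bridge. Since $\WW \in \mathscr{C}_g^\alpha$ is geometric, fix a sequence of smooth paths $\WW^n \to \WW$ in the rough path metric. For each $n$, classical PDE theory provides a solution $u^n \in \mathcal{X} := C([0,T]; L^2(\R^d)) \cap L^2([0,T]; H^1)$ of \eqref{eq:non-linear} driven by $\WW^n$, and by the classical Duhamel principle (equivalently, by applying Proposition \ref{thm:VariationalUniqueness} in the smooth case), $u^n$ also satisfies
$$
u^n_t = P_{0t}^{\WW^n} g + \int_0^t P_{st}^{\WW^n} F(u^n_s)\, ds.
$$

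Next, I would show $u^n \to u$ in $\mathcal{X}$, where $u$ is the unique mild solution provided by Proposition \ref{thm:MildWellPosedness}. Decomposing
$$
u^n_t - u_t = (P_{0t}^{\WW^n} - P_{0t}^{\WW})g + \int_0^t (P_{st}^{\WW^n} - P_{st}^{\WW}) F(u_s)\, ds + \int_0^t P_{st}^{\WW^n}\bigl(F(u^n_s) - F(u_s)\bigr)\, ds,
$$
the first two terms tend to zero in $\mathcal{X}$ by Theorems \ref{thm:L2cont} and \ref{thm:H1cont} (for the second term one uses dominated convergence together with the fact, from Theorem \ref{thmEnergyEstimate}, that the energy bounds on $P^{\WW^n}$ are uniform on bounded sets of rough paths; $F(u) \in L^2([0,T]; L^2)$ since $u \in L^2([0,T]; H^1)$ and \eqref{FConditions} holds). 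The third term is controlled by the Lipschitz norm of $F$ together with the same uniform energy estimate, and a Gronwall-type argument on $\|u^n - u\|_{\mathcal{X};[0,t]}$ then closes the convergence for small time, and by iteration on $[0,T]$.

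Having obtained the limit $u \in \mathcal{X}$, it remains to verify that $u$ solves \eqref{eq:non-linear}. The drift and nonlinear terms $\int_0^t(u^n_s, L^*\phi) + (F(u^n_s), \phi)\, ds$ pass to the limit using the strong convergence $u^n \to u$ in $\mathcal{X}$ and the Lipschitz property of $F$. The delicate point is the rough integral term $\int_0^t (u^n_s, \Gamma^{i,*}\phi)\, d\WW^{n,i}_s$. Here I would rely on the fact that, for smooth $\WW^n$, the pair $(\Gamma^i u^n, -\Gamma^j\Gamma^i u^n)$ is controlled by $W^n$ in $H^{-3}$, with controlled-rough-path norms bounded uniformly in $n$ by the uniform $\mathcal{X}$-bound on $u^n$ combined with the equation itself (which provides the required Gubinelli derivative structure). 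Standard rough path continuity of the sewing map in both the driver and the integrand then yields convergence of the rough integrals, and simultaneously shows that the limit $(\Gamma^i u, -\Gamma^j\Gamma^i u)$ is controlled by $W$ in $H^{-3}$.

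The main obstacle is precisely this last step: establishing \emph{uniform} control of $(\Gamma^i u^n, -\Gamma^j \Gamma^i u^n)$ as a controlled path in $H^{-3}$ with respect to $W^n$. This requires extracting the Gubinelli derivative bound directly from the variational formulation (testing against $\phi \in H^3$ and using Chen's relation together with the equation for $u^n$), and then invoking a compactness/closedness criterion for controlled paths of the type used in \cite{HNS} to identify the limit. Once this is in place, rough-path continuity of the integral completes the proof. As hinted in the remark on $p$-variation, this step is technically subtle in the H\"older setting; nevertheless, it suffices for the existence statement, and combined with Proposition \ref{thm:VariationalUniqueness} it concludes the proof of Theorem \ref{thm:Non-linearWellPosed}.
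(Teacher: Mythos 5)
Your overall architecture (smooth approximations $\WW^n\to\WW$, classical solutions $u^n$ which also satisfy the mild formulation, passage to the limit) is the same as the paper's, and your intermediate step is a legitimate variant: instead of the paper's compactness argument you use the stability Theorems \ref{thm:L2cont} and \ref{thm:H1cont}, the uniformity of the energy estimate of Theorem \ref{thmEnergyEstimate} over bounded sets of rough paths, and a Gronwall iteration to get strong convergence of $u^n$ in $C([0,T];L^2(\R^d))\cap L^2([0,T];H^1)$ to the unique solution of \eqref{eq:MildFormulation}. Modulo some care (the stability theorems are stated for the backward equation, so a time-reversal identification of $P^{\WW}_{st}$ is needed, and the Gronwall argument has to be restarted on subintervals using the Lipschitz dependence on the initial condition), this part can be made to work and even gives more than the paper's subsequential convergence in $L^2([0,T];L^2(K))\cap C([0,T];L^{2}(K)_w)$.

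The genuine gap is in the step you yourself call the main obstacle, and the fix you sketch would fail. You propose to extract, uniformly in $n$, the bound showing that $(\Gamma^i u^n,-\Gamma^j\Gamma^i u^n)$ is controlled by $W^n$ in $H^{-3}$ ``from the equation itself''. But testing the equation against $\phi\in H^3$, the increment $\delta(u^n,\Gamma^{i,*}\phi)_{st}$ contains the drift contribution $\int_s^t (F(u^n_r),\Gamma^{i,*}\phi)\,dr$, and since the available uniform bound only puts $F(u^n)$ in $L^2([0,T];L^2(\R^d))$, this term is merely $O(|t-s|^{1/2})$; as $2\alpha>2/3>1/2$ for every admissible $\alpha\in(1/3,1/2]$, the $2\alpha$-remainder bound required by Definition \ref{def:ControlledPath} cannot be obtained this way. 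This is exactly the obstruction recorded in \eqref{p-variationNeeded} and Remark \ref{rmk:p-variation}, which the authors state can only be removed by passing to $p$-variation scales. Consequently the ``standard rough path continuity of the sewing map'' has nothing to act on, and neither the convergence of the rough integral terms nor the controlledness of the limit pair follows from your argument as written. The paper's proof deliberately avoids needing such a uniform controlled-path bound: from the mild formulation it derives only the weaker uniform estimate \eqref{uniformHolder}, i.e.\ a $1/2$-H\"older bound in $H^{-1}$, which suffices for the compact embedding of \cite[Lemma A.1]{HNS} into $L^2([0,T];L^2(K))\cap C([0,T];L^{2}(K)_w)$, and then performs the limit passage in the equation (identification of the rough term included) by the machinery of \cite[Theorem 4.2]{HNS} rather than by a sewing-map continuity argument based on uniform $2\alpha$-remainders. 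To complete your proof you would have to either reproduce that limit-passage argument or genuinely move to $p$-variation spaces as suggested in Remark \ref{rmk:p-variation}.
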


\begin{proof}
Assume first that $W$ is smooth. With the notation of the previous proposition, notice first that similar (but easier) computations as in the proof of Theorem \ref{existenceThmRegular} shows that for $\phi \in H^1$ we have
\begin{equation} \label{ForwardTimeReverseFlow}
\|\delta ( P_{r \cdot}^{\WW,*} \phi)_{st} -  P_{r s}^{\WW,*} \Gamma^{i,*} \phi W_{st}^i \|_0 \lesssim |t-s|^{2 \alpha} \|\phi \|_1 .
\end{equation}

Since $W$ is smooth, $u$ satisfies \eqref{eq:MildFormulation}, which gives that for any $\phi \in H^1$
\begin{align}
(\delta  u_{st}, \phi) &  = ( \delta (P_{0 \cdot}^{\WW} g)_{st}, \phi) + \int_0^s (  \delta ( P_{r \cdot}^{\WW} F(u_r))_{st}  , \phi)dr  + \int_s^t (  P_{r t }^{\WW} F(u_r),  \phi) dr   \notag \\
&  = (  g, \delta (P_{0 \cdot}^{\WW,*} \phi)_{st} ) + \int_0^s (   F(u_r)  , \delta (P_{r \cdot}^{\WW,*}\phi)_{st})dr  + \int_s^t (  P_{r t }^{\WW} F(u_r),  \phi) dr . \label{eq:mildExpansion}  
\end{align}

From \eqref{ForwardTimeReverseFlow} we get
\begin{align*}
\left|(  g, \delta (P_{0 \cdot}^{\WW,*}\phi)_{st} ) - (  g,P_{0 s}^{\WW,*} \Gamma^{i,*} \phi)  W_{st}^i \right| \lesssim |t-s|^{2 \alpha} \|\phi\|_1 \|g\|_0 ,
\end{align*}
\begin{align*}
\int_0^s \left| ( F(u_r) , \delta (P_{r \cdot}^{\WW,*}\phi)_{st} ) - ( F(u_r)  ,P_{r s}^{\WW,*} \Gamma^{i,*} \phi)  W_{st}^i \right| ds  \lesssim |t-s|^{2 \alpha} \|\phi\|_1 \int_0^s \| F(u_r) \|_0 dr.
\end{align*}
and
\begin{equation} \label{p-variationNeeded}
\left|\int_s^t (  P_{r t }^{\WW} F(u_r),  \phi) dr \right| \leq \left(\int_s^t \| F(u_r) \|_0^2 dr \right)^{1/2} (t-s)^{1/2}  \lesssim \left(1 + \int_0^T \|u_r \|_1^2 dr \right)^{1/2} (t-s)^{1/2} 
\end{equation}
uniformly in the rough path metric. This shows that
\begin{equation} \label{uniformHolder}
\sup_{\| \WW \|_{\alpha} \leq M} \| \delta u_{st} \|_{-1} \lesssim \left(1 + \int_0^T \| u_r \|_1^2 dr \right)^{1/2} |t-s|^{1/2} .
\end{equation}

This shows that uniformly over bounded sets of $\| \WW \|_{\alpha}$, the solution $u$ remains in a bounded set of 
$$
C([0,T]; L^2(\R^d)) \cap L^2([0,T]; H^1) \cap C^{1/2}([0,T]; H^{-1}) 
$$
which is compactly embedded into
$$
L^2([0,T]; L^2(K)) \cap C([0,T]; L^{2}(K)_w) 
$$
for any compact $K \subset \R^d$, see e.g. \cite[Lemma A.1]{HNS}. In the above we have denoted by $L^{2}(K)_w$ the usual $L^2$ space equipped with its weak topology. 

Now, for any geometric rough path $\WW$, we take a sequence $W^n$ of smooth paths converging to $\WW$ in the rough path metric. Then the sequence of corresponding solution $u^n$ is relatively compact in $L^2([0,T]; L^2(K)) \cap C([0,T]; L^{2}(K)_w)$. We may thus take a sub-sequence converging to $u$ and taking the limit in the equation \eqref{eq:non-linear} we obtain a solution. See \cite[Theorem 4.2]{HNS} for precise details.

\end{proof}

\begin{remark} \label{rmk:p-variation}
We note that in order to show that a solution to \eqref{eq:MildFormulation} is controlled by $W$ in $H^{-1}$, one would proceed as in the above proof up to \eqref{p-variationNeeded}. However, the estimate in \eqref{p-variationNeeded} is not enough to show that the term 
$
\int_s^t  P_{r t }^{\WW} F(u_r) dr
$
showing up in the expansion \eqref{eq:mildExpansion}
is a remainder in $H^{-1}$ in the sense of controlled paths, see Definition \ref{def:ControlledPath}. One could circumvent this by introducing $p$-variation spaces which would give a direct approach to show that the mild and variational formulations actually are equivalent. 
\end{remark}

\section{Appendix}

The main objective of the appendix is to develop the theory of controlled rough paths for compositions by $L^p(\R^d)$-functions needed for the paper. This is not possible for a general controlled path $(Y,Y') \in \mathscr{D}_Z^{2 \alpha}([0,T]; C(\R^d;\R^d))$ without some sort of non-degeneracy condition on the spatial variable. Indeed, assume e.g. $(Y,Y') = (Z,I)$ is constant in space. Then for any $g \neq 0$, 
$$
\int g(Y)^p dx = \infty,
$$
so that there is no hope in giving meaning to $(g(Y), g(Y)') = (g(Y), \nabla g(Y) Y') \in \D^{2 \alpha}_Z([0,T]; L^p(\R^d))$.

We introduce the appropriate notion of non-degeneracy which prevent this situation.

\begin{definition} \label{nonDegenerateControlledPath}
We shall say that a controlled path $(Y,Y') \in \mathscr{D}_{Z}^{2 \alpha}([0,T]; C(\R^d;\R^d)$ is $C^1$-non-degenerate provided $Y$ is actually $C^1( \R^d;\R^d)$-valued and 
\begin{equation} \label{flowDeterminantCondition}
\ND(Y) := \inf_{s,t \in [0,T], x \in \R^d, \theta \in [0,1]}  det(\nabla ( Y_s(x) + \theta \delta Y_{st}(x) ) ) > 0. 
\end{equation}
\end{definition}

\begin{lemma} \label{L2Composition}
Let $g \in W^{2,p}(\R^d)$ and assume $(Y,Y') \in \D_Z^{2 \alpha}([0,T]; C_b(\R^d;\R^d))$ satisfies $\ND(Y) > 0$. Then we have 
$$
\left( g(Y) , \ \nabla g(Y)Y'  \right) \in \D^{2 \alpha}_Z([0,T]; L^p(\R^d))
$$
with the bound 
\begin{align*}
\|( g(Y) , \nabla g(Y) Y' )  \|_{\alpha, Z; L^p(\R^d)}  & \leq   C \ND(Y)^{-1/p} ( 1 + \|Z\|_{\alpha})^2 (  \|Y_0'\|_{\infty} + \|(Y,Y')\|_{\alpha, Z;C_b(\R^d;\R^d)})^2 |g|_{2,p} 
\end{align*}
\end{lemma}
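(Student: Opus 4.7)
The plan is to verify the two defining inequalities of a controlled path in $L^p(\R^d)$, namely the $|t-s|^{2\alpha}$-bound on the remainder $g(Y_t) - g(Y_s) - \nabla g(Y_s) Y_s' Z_{st}$ and the $|t-s|^\alpha$-bound on $\delta(\nabla g(Y)Y')_{st}$. The key mechanism throughout is the change-of-variables trick permitted by the non-degeneracy $\ND(Y) > 0$: whenever I need the $L^p$-norm of $\nabla^k g$ evaluated along the path $Y_s(x) + \theta \delta Y_{st}(x)$, I push the integral back to $y$-coordinates. Since $\ND(Y) > 0$ means that the map $x \mapsto Y_s(x) + \theta \delta Y_{st}(x)$ is a $C^1$-diffeomorphism with Jacobian bounded below by $\ND(Y)$, this substitution costs only a factor $\ND(Y)^{-1/p}$ and gives
$$
\| \nabla^k g ( Y_s + \theta \delta Y_{st} ) \|_{L^p} \leq \ND(Y)^{-1/p} \| \nabla^k g\|_{L^p}
$$
for $k = 1, 2$ uniformly in $s,t,\theta$.

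For the first bound I would use the first-order Taylor expansion
$$
g(Y_t) - g(Y_s) = \nabla g(Y_s)\, \delta Y_{st} + \int_0^1 \bigl( \nabla g(Y_s + \theta \delta Y_{st}) - \nabla g(Y_s) \bigr)\, \delta Y_{st}\, d\theta,
$$
then rewrite $\nabla g(Y_s)\delta Y_{st} = \nabla g(Y_s) Y_s' Z_{st} + \nabla g(Y_s) R^Y_{st}$, where $R^Y_{st} := \delta Y_{st} - Y_s' Z_{st}$ satisfies $\|R^Y_{st}\|_\infty \lesssim \|(Y,Y')\|_{\alpha,Z;\Lip} |t-s|^{2\alpha}$. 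The term $\nabla g(Y_s) R^Y_{st}$ is controlled in $L^p$ by $\ND(Y)^{-1/p}\|\nabla g\|_{L^p}\|R^Y_{st}\|_\infty$ via the substitution above. For the integral remainder I apply the fundamental theorem of calculus once more to express $\nabla g(Y_s + \theta \delta Y_{st}) - \nabla g(Y_s)$ as an integral of $\nabla^2 g$ along the segment, and then the same change of variables gives an $L^p$-bound of order $\ND(Y)^{-1/p}\|\nabla^2 g\|_{L^p}\|\delta Y_{st}\|_\infty^2$. Finally I control $\|\delta Y_{st}\|_\infty \lesssim (\|Y_0'\|_\infty + \|(Y,Y')\|_{\alpha,Z;\Lip})(1+\|Z\|_\alpha)|t-s|^\alpha$, so that both pieces of the remainder are $O(|t-s|^{2\alpha})$.

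For the second bound, I expand
$$
\delta (\nabla g(Y) Y')_{st} = \Bigl( \int_0^1 \nabla^2 g(Y_s + \theta \delta Y_{st})\, d\theta\, \delta Y_{st}\Bigr) Y_t' + \nabla g(Y_s)\, \delta Y_{st}'
$$
and bound each piece in $L^p$ with the same change-of-variables argument: the first piece is $\lesssim \ND(Y)^{-1/p}\|\nabla^2 g\|_{L^p}\|\delta Y_{st}\|_\infty \|Y_t'\|_\infty$, the second is $\lesssim \ND(Y)^{-1/p}\|\nabla g\|_{L^p}\|\delta Y_{st}'\|_\infty$. Using $\|Y_t'\|_\infty \leq \|Y_0'\|_\infty + \|(Y,Y')\|_{\alpha,Z;\Lip}T^\alpha$ and $\|\delta Y_{st}'\|_\infty \lesssim \|(Y,Y')\|_{\alpha,Z;\Lip}|t-s|^\alpha$, both pieces are $O(|t-s|^\alpha)$. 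Collecting the worst constants from the two bounds yields the claimed estimate with the quadratic factor $(\|Y_0'\|_\infty + \|(Y,Y')\|_{\alpha, Z;\Lip})^2$ and $(1+\|Z\|_\alpha)^2$.

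The main obstacle I anticipate is the bookkeeping rather than any analytical difficulty: one must keep track of which bound requires $\nabla g \in L^p$ versus $\nabla^2 g \in L^p$, and ensure that the $L^p$-inequality $\|\nabla^k g (Y_s + \theta \delta Y_{st})\|_{L^p} \leq \ND(Y)^{-1/p}\|\nabla^k g\|_{L^p}$ is applied uniformly in $\theta\in[0,1]$ and in $s,t$, which is exactly what the uniformity in the definition of $\ND(Y)$ is designed to provide.
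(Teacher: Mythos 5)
Your proposal is correct and follows essentially the same route as the paper: the $\ND(Y)^{-1/p}$ change-of-variables bound along the interpolating segment $Y_s+\theta\,\delta Y_{st}$, a first/second-order Taylor expansion for the remainder split as $\nabla g(Y_s)Y_s'Z_{st}+\nabla g(Y_s)R^Y_{st}+(\text{second-order term})$, and the product-rule decomposition for $\delta(\nabla g(Y)Y')_{st}$. The only point you skip is the paper's preliminary reduction to smooth $g$ (with a $W^{2,p}$-approximation at the end), which is needed to justify the pointwise Taylor and fundamental-theorem manipulations since a general $g\in W^{2,p}(\R^d)$ need not be $C^1$; your estimates are stable under this limit, so the gap is routine to fill.
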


\begin{proof}

We begin by showing the estimates for smooth $g$, and the general case will follow by taking approximations.

Notice first that $y \mapsto  g(Y_t(y))  $ is a well defined element of $L^p(\R^d)$;
\begin{align} \label{CompositionWellDef}
 \int g(Y_t(y))^p dy  &   = \int g(x)^p det( \nabla Y_t(x))^{-1} dx  \leq \ND(Y)^{-1}  \int g(x)^p  dx  
\end{align}
where we have used $ det( \nabla Y_t(x)) \geq \ND(Y)$.

For the Gubinelli derivative, we consider
\begin{align*}
 \delta \left( \nabla g(Y_{\cdot}(y))   Y_{\cdot}'(y)  \right)_{st}  & =   \delta (\nabla g(Y_{\cdot}(y)))_{st}   Y_{s}'(y)  +  \nabla g(Y_t(y))   \delta Y_{st}'(y)  .
\end{align*}
For the first term we write
\begin{align*}
\delta (\nabla g(Y_{\cdot}(y)))_{st} Y_s'(y)  =  \int_0^1 \nabla^2 g(Y_s(y) + \theta \delta Y_{st}(y) ) d \theta \delta Y_{st}(y) Y_s'(y)
\end{align*}
which gives
\begin{align*}
\int |\delta (\nabla g(Y_{\cdot}(y)))_{st}|^p |Y_s'(y)|^p dy  & \leq  |Y'(y)|^p_{\infty} \int_0^1 \int |\nabla^2 g(Y_s(y) + \theta \delta Y_{st}(y) )  \delta Y_{st}(y)|^p dy d \theta \\
 & \leq |t-s|^{p \alpha}  ( \|Y_0'\|_{C_b(\R^d;\R^d)} + T^{ \alpha} \|Y'\|_{\alpha})^{2p} \ND(Y)^{-1} \|\nabla^2 g\|_{0,p}^p ,
\end{align*}
where we have used
\begin{equation} \label{supToHolder}
|Y_t'(y) | \leq |Y_t'(y) - Y_0'(y) | + |Y_0'(y)| \leq T^{\alpha} \|Y' \|_{\alpha}+ |Y'_0(y)| .
\end{equation}
For the second term use the bound
$$
\sup_{y \in \R^d }\| Y'(y) \|_{\alpha}^p |t-s|^{p \alpha}  \int |\nabla g(Y_{\cdot}(y))|^p dy     \leq \| (Y,Y') \|^p_{\alpha, Z;C_b(\R^d;\R^d)} \ND(Y)^{-1}\|\nabla g\|_{0,p}^p  |t-s|^{p \alpha} .
$$
For the remainder we use Taylor's formula
$$
g(x) - g(y) =  \nabla g(y) (x- y)  + \int_0^1 \nabla^2 g(y + \theta (x-y))(1- \theta ) d \theta (x-y)^{\otimes 2}
$$
to get 
\begin{align*}
 \delta   g(Y_{\cdot}(y))_{st} & =  \nabla g(Y_s(y)) \delta Y_{st}(y)   +   \int_0^1 \nabla^2 g(Y_s(y) + \theta \delta Y_{st}(y) )(1- \theta )   d \theta  ( \delta Y_{st}(y))^{\otimes 2} \\
  & =   \nabla g(Y_s(y)) Y_s'(y)     \delta Z_{st} +\nabla g(Y_s(y)) \left( \delta Y_{st}(y) - Y_s'(y) \delta Z_{st} \right)   \\
  & + \int_0^1 \nabla^2 g(Y_s(y) + \theta \delta Y_{st}(y) )(1- \theta )   d \theta  ( \delta Y_{st}(y))^{\otimes 2} . 
\end{align*}

For the second term we use the bound
\begin{align*}
\int |\nabla g(Y_s(y)) \left( \delta Y_{st}(y) - Y_s'(y) \delta Z_{st}  \right) |^p    dy &  \leq |t-s|^{2p \alpha} \sup_{y \in \R^d} \| \delta Y(y) - Y'(y) \delta Z \|_{2 \alpha}^{p}  \int |\nabla g(Y_{\cdot}(y))|^p dy  \\
&  \leq |t-s|^{2p \alpha}  \| (Y, Y') \|^p_{\alpha,Z;C_b(\R^d;\R^d)} \ND(Y)^{-1} \int |\nabla g(x)|^p dx  . 
\end{align*}

For the last term we use the bound
\begin{align*}
 \int \int_0^1 |\nabla^2 &  g(Y_s(y) + \theta \delta Y_{st}(y) )(1- \theta )   ( \delta Y_{st}(y))^{\otimes 2} |^p d \theta dy  \\
  & \leq |t-s|^{2p \alpha} \sup_{y \in \R^d} \| Y(y) \|^{2p}_{\alpha} \int_0^1 \int  |\nabla^2 g(Y_s(y) + \theta \delta Y_{st}(y) )  |^p  dy  (1 - \theta)^2 d \theta \\
  & \leq |t-s|^{2p \alpha} \sup_{y \in \R^d} \| Y(y) \|^{2p}_{ \alpha}   \int_0^1 \int  |\nabla^2 g(x )  |^p det( \nabla (Y_s(x) + \theta \delta Y_{st}(x) ))^{-1} dy  d \theta \\
  & \leq |t-s|^{2p \alpha} \sup_{y \in \R^d} \| Y(y) \|^{2p}_{ \alpha}  \ND(Y)^{-1} \|\nabla^2 g  \|_{0,p}^p .
 \end{align*}

As in \eqref{supToHolder} we have
\begin{align*}
 \| Y(y) \|_{ \alpha} &  \leq \sup_{t \in [0,T]} |Y_t'(y)| \|Z\|_{\alpha} + \|  \delta Y - Y' \delta Z  \|_{2 \alpha} \leq (|Y_0'(y)| + T^{\alpha} \|Y'(y)\|_{\alpha} )\|Z\|_{\alpha} + \| \delta Y - Y' \delta Z\|_{2 \alpha} \\
&  \leq \left( |Y_0'(y)|  + \|(Y,Y')\|_{\alpha, Z;C_b(\R^d;\R^d)} \right)( T^{\alpha}  \|Z\|_{\alpha} + 1) . 
\end{align*}

For general $g \in W^{2,p}(\R^d)$, take a smooth approximation $g_n$ such that $g_n \rightarrow g$ in $W^{2,p}(\R^d)$. The condition \eqref{flowDeterminantCondition} allows us to take the $L^p(\R^d)$-limit in all the expressions above, e.g.
$$
\int |(g_n -g)(Y_t(y))|^p dy \leq \ND(Y)^{-1} \int |g_n(x) -  g(x)|^p dx \rightarrow 0
$$
and
\begin{align*}
\int \left| \int_0^1 \nabla^2 (g_n - g) (Y_s(y) + \theta \delta Y_{st}(y) ) d \theta \right|^p dy & \leq \int_0^1 \int \left|  \nabla^2 (g_n - g) (Y_s(y) + \theta \delta Y_{st}(y) )  \right|^p dy d \theta \\
& \leq \ND(Y)^{-1} \int |\nabla^2 g_n(x) -  \nabla^2 g(x)|^p dx \rightarrow 0 .
\end{align*}
\end{proof}

For technical reasons, in Theorem \ref{Uniqueness} and Lemma \ref{SquaredEquation} we shall need also to construct the rough path integral as an $W^{3,p}(\R^d)$-valued object. This necessitates more regularity on the coefficients. The proof follows from a tedious generalization of Lemma \ref{L2Composition}.

\begin{lemma} \label{L2CompositionH3}
Let $g \in W^{5,p}(\R^d)$ and assume $(Y,Y') \in \D_Z^{2 \alpha}([0,T]; C^3_b(\R^d;\R^d))$ satisfies $\ND(Y) > 0$. Then we have 
$$
\left( g(Y) , \ \nabla g(Y)Y'  \right) \in \D^{2 \alpha}_Z([0,T]; W^{3,p}(\R^d))
$$
with the bound 
\begin{align*}
\|( g(Y) , \nabla g(Y) Y' )  \|_{\alpha, Z; W^{3,p}(\R^d)}  & \leq   C \ND(Y)^{-1/p} ( 1 + \|Z\|_{\alpha})^2  \\
 & \qquad \times ( 1 +  \|Y_0'\|_{C_b^3(\R^d)} + \|(Y,Y')\|_{\alpha, Z; C_b^3(\R^d)})^2  \|g\|_{5,p} .
\end{align*}
\end{lemma}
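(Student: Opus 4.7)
The plan is to mimic the proof of Lemma \ref{L2Composition}, but now tracking three extra spatial derivatives throughout, with all $L^p$-estimates coming from a change of variables under the map $x = Y_s(y) + \theta \delta Y_{st}(y)$, whose Jacobian is bounded below by $\ND(Y)$ \emph{uniformly in $\theta \in [0,1]$} thanks to \eqref{flowDeterminantCondition}. The derivative accounting is this: differentiating $g(Y)$ spatially once produces $\nabla g(Y)\nabla Y$, trading one derivative of $Y$ for one extra derivative of $g$. Consequently, the Taylor remainder in the proof of Lemma \ref{L2Composition}, which already carries $\nabla^2 g$, will involve $\nabla^{2+3}g = \nabla^5 g$ after three spatial differentiations, giving the hypothesis $g \in W^{5,p}(\R^d)$. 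On the $Y$-side, the polynomial coefficients that appear involve derivatives of $Y$ and $Y'$ up to order three, which matches $(Y,Y') \in \D_Z^{2\alpha}([0,T];C_b^3(\R^d;\R^d))$.

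First I would establish the pointwise-in-time $W^{3,p}$ bound on $g(Y_t)$ and on $F'_t := \nabla g(Y_t) Y'_t$ for smooth $g$. By Fa\`a di Bruno, for $k\le 3$,
\[
\nabla^k\bigl(g\circ Y_t\bigr) = \sum_{j=1}^{k} (\nabla^j g)(Y_t)\, P_{k,j}\bigl(\nabla Y_t,\ldots,\nabla^k Y_t\bigr),
\]
and each factor $(\nabla^j g)(Y_t)$ is controlled in $L^p$ by $\ND(Y)^{-1/p}\|\nabla^j g\|_{0,p}$ via $x=Y_t(y)$, while each $P_{k,j}$ is bounded in $L^\infty$ by a polynomial in $\|Y_t\|_{C_b^3}$. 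The same scheme applied to $F'_t$ introduces an extra factor $Y'_t$, bounded as in \eqref{supToHolder} by $\|Y'_0\|_{C_b^3} + T^\alpha\|(Y,Y')\|_{\alpha,Z;C_b^3}$, and needs derivatives of $g$ only up to order four.

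For the controlled-path remainder I would use the same Taylor expansion as in Lemma \ref{L2Composition},
\begin{align*}
\delta \bigl(g(Y_{\cdot})\bigr)_{st} - \nabla g(Y_s) Y'_s\, \delta Z_{st}
 =\,& \nabla g(Y_s)\bigl(\delta Y_{st} - Y'_s \delta Z_{st}\bigr) \\
 &+ \int_0^1 \nabla^2 g\bigl(Y_s + \theta\delta Y_{st}\bigr)(1-\theta)\, d\theta\, (\delta Y_{st})^{\otimes 2},
\end{align*}
and the analogous two-term expansion of $\delta F'_{st}$. Applying $\nabla^k$, $k\le 3$, to each remainder and invoking Fa\`a di Bruno yields a finite sum of terms of the form (derivative of $g$ of order at most $5$, evaluated at $Y_s$ or at $Y_s+\theta\delta Y_{st}$) multiplied by polynomials in the spatial derivatives of $Y_s$, $Y'_s$, $\delta Y_{st}$, and $\delta Y_{st}-Y'_s\delta Z_{st}$. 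The $L^p$-bound on the $g$-factors evaluated at the nonlinear argument is the one place where the $\theta$-uniform non-degeneracy in Definition \ref{nonDegenerateControlledPath} is essential. The $|t-s|^{2\alpha}$ decay comes either from $\delta Y_{st} - Y'_s\delta Z_{st}$ (for the first remainder) or from $(\delta Y_{st})^{\otimes 2}$ (for the second), each estimated in $C_b^3$-norm by $\|(Y,Y')\|_{\alpha,Z;C_b^3}$.

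The main obstacle is bookkeeping rather than genuine analytic difficulty: Fa\`a di Bruno applied to a composition, followed by a Taylor expansion and then three more spatial differentiations, generates a great many cross-terms, and each must be recognized as falling inside the announced bound
\[
C\,\ND(Y)^{-1/p}(1+\|Z\|_\alpha)^2 \bigl(1+\|Y'_0\|_{C_b^3}+\|(Y,Y')\|_{\alpha,Z;C_b^3}\bigr)^2 \|g\|_{5,p}.
\]
However, no term is more singular than the top-order one, so the estimate survives. Finally, an approximation $g_n \to g$ in $W^{5,p}$ with $g_n \in C_c^\infty$ extends the bound from smooth $g$ to the general case, exactly as in the closing paragraph of the proof of Lemma \ref{L2Composition}.
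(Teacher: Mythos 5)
Your argument is correct: the hypotheses $g\in W^{5,p}$ and $(Y,Y')$ controlled in $C_b^3$ are exactly what your derivative count requires, and the one genuinely essential ingredient -- the change of variables $x=Y_s(y)+\theta\,\delta Y_{st}(y)$ with Jacobian bounded below by $\ND(Y)$ uniformly in $\theta$, i.e.\ \eqref{flowDeterminantCondition} -- is correctly identified and used in every term. Where you differ from the paper is in the organization of the computation. You redo the proof of Lemma \ref{L2Composition} from scratch: Taylor expansion of the remainder, then three spatial differentiations via Fa\`a di Bruno, then term-by-term $L^p$ estimates. The paper instead bootstraps off Lemma \ref{L2Composition} itself: it pairs the remainder against $\nabla h$ for $h\in L^q$ and splits the differentiated remainder by the product rule into (i) the remainder of $\bigl(\nabla g(Y),\nabla^2 g(Y)Y'\bigr)$ multiplied by $\nabla Y_s$, which is handled by applying Lemma \ref{L2Composition} to $\nabla g$, (ii) $\nabla g(Y_s)$ times the controlled-path remainder of $\nabla Y$, available from the $C_b^3$ hypothesis, and (iii) the product of increments $\delta\nabla g(Y)_{st}\,\delta\nabla Y_{st}$, each factor of order $|t-s|^{\alpha}$; higher derivatives are then treated by iterating this splitting (the paper only writes out the $W^{1,p}$ step). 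The paper's recursion buys shorter bookkeeping and keeps the constants aligned with the first-order lemma, since no new Taylor expansion is performed; your direct Fa\`a di Bruno route is more self-contained and makes the derivative accounting (why exactly $W^{5,p}$ and $C_b^3$) completely explicit, at the price of many more cross-terms to check -- and, as in the paper's own statement, you should expect the power of $\bigl(1+\|Y_0'\|_{C_b^3}+\|(Y,Y')\|_{\alpha,Z;C_b^3}\bigr)$ and the dependence on $\|Y\|_{C_b^3}$ to be absorbed into the constant rather than literally remaining quadratic term by term.
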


\begin{proof}
From Lemma \ref{L2Composition} we know that for any $h \in L^q(\R^d)$ we have
\begin{align} \label{innerProduct}
\left| \left( \delta g(Y)_{st} - \nabla g(Y_s) Y_s' \delta Z_{st} ,  h \right) \right| & \lesssim |t-s|^{2 \alpha} \|g\|_{2,p} \|h\|_{0,q} \ND(Y)^{-1/p} ( 1 + \|Z\|_{\alpha})^2 \notag \\
 & \qquad \times( 1 +  \|Y_0'\|_{C_b(\R^d)} + \|(Y,Y')\|_{\alpha, Z; C_b(\R^d)})^2 .
\end{align}
Replacing $h$ by $\nabla h$ in the above inner product we get
\begin{align*}
\big( \nabla( \delta g(Y& )_{st}   - \nabla g(Y_s) Y_s' \delta Z_{st}) , h \big)  =  \left( ( \delta \nabla g(Y) \nabla Y)_{st} - \nabla^2 g(Y_s) \nabla Y_s \otimes Y_s' \delta Z_{st}- \nabla g(Y_s) \nabla  Y_s' \delta Z_{st} ,  h \right) \\
 & =  \left(  (\delta \nabla g(Y)_{st} - \nabla^2 g(Y_s) Y_s' \delta Z_{st}) \nabla Y_s  + \nabla g(Y_{s})(  \delta \nabla Y_{st}  -  \nabla  Y_s' \delta Z_{st})  + \delta \nabla g(Y)_{st} \delta \nabla Y_{st} , h \right) \\
& \leq \| \nabla Y\|_{\infty} \|\delta \nabla g(Y)_{st} - \nabla^2 g(Y_s) Y_s' \delta Z_{st} \|_{0,p} \| h\|_{0,q} \\
& + \| \nabla g(Y_s) \|_{0,p} \|h\|_{0,q} \|\delta Y_{st} - Y_s' \delta Z_{st} \|_{C^1_b(\R^d)} + \|\delta Y_{st}\|_{C^1_b(\R^d)} \| \delta \nabla g(Y)_{st} \|_{0,p} \|h\|_{0,q}
\end{align*}
The first term can be bounded using Lemma \ref{L2Composition}. The second term is bounded by the assumption that $(Y,Y')$ is controlled in $C^3_b(\R^d)$. The last term is bounded by Lemma \ref{L2Composition};
\begin{align*}
\| \delta \nabla g(Y)_{st} \|_{0,p} & \leq \| \delta \nabla g(Y)_{st}  - \nabla^2g(Y_s) Y_s' \delta Z_{st} \|_{0,p} + \|  \nabla^2g(Y_s) Y_s' \delta Z_{st} \|_{0,p}  \\
 & \leq \| (\nabla g(Y), \nabla^2 g(Y) Y') \|_{\alpha,Z; L^p(\R^d)} |t-s|^{2 \alpha} \\
 & \qquad + \| \nabla^2 g(Y_s)\|_{0,p} \|Y_s'\|_{C_b(\R^d:\R^d)} \|Z\|_{\alpha} |t-s|^{\alpha} \\
 & \lesssim  \| g\|_{3,p}  \ND(Y)^{-1/2} ( 1 + \|Z\|_{\alpha})^2 (1 +   \|Y_0'\|_{C_b(\R^d)} + \|(Y,Y')\|_{\alpha, Z; C_b(\R^d)})^2 |t-s|^{\alpha} 
\end{align*} 
giving
\begin{align*}
\| \nabla( \delta g(Y)_{st} &  - \nabla g(Y_s) Y_s' \delta Z_{st})\|_{0,p} \\
 \lesssim &  \| g\|_{3,p}  \ND(Y)^{-1/p} ( 1 + \|Z\|_{\alpha})^2 (1 +   \|Y_0'\|_{C_b^1(\R^d)} + \|(Y,Y')\|_{\alpha, Z; C_b^1(\R^d)})^2 |t-s|^{\alpha}  .
\end{align*}
Similarly we get 
\begin{align*}
\| \nabla( \delta (\nabla g(Y) Y')_{st} \|_{0,p} \lesssim   \| g\|_{3,p}  \ND(Y)^{-1/2} ( 1 + \|Z\|_{\alpha})^2 (1 +   \|Y_0'\|_{C_b^1(\R^d)} + \|(Y,Y')\|_{\alpha, Z; C_b^1(\R^d)})^2 |t-s|^{\alpha}  ,
\end{align*}
which shows that 
$$
\|( g(Y) , \nabla g(Y) Y' )  \|_{\alpha, Z; W^{1,p}(\R^d)}  \leq   C \ND(Y)^{-1/p} ( 1 + \|Z\|_{\alpha})^2 ( 1 +  \|Y_0'\|_{C_b^1(\R^d)} + \|(Y,Y')\|_{\alpha, Z; C_b^1(\R^d)})^2  \|g\|_{3,p}. 
$$

The general statement is proved using similar, though more involved computations, and is left to the interested reader.
\end{proof}

It is straightforward to see that the space $ \D^{2 \alpha}_{Z} ([0,T]; \mathcal{L}(E))$ acts as a multiplier on $\D^{2 \alpha}_{Z} ([0,T]; E)$ in the following way:

\begin{lemma} \label{abstractMultiplier}
For $(G,G') \in \D^{2 \alpha}_Z([0,T];E)$ and $(A,A') \in \D^{2 \alpha}_{Z} ([0,T]; \mathcal{L}(E))$ we have $(AG, AG' + A'G) \in \D^{2 \alpha}_Z([0,T]; E)$ with bounds
\begin{equation} \label{abstractMultiplierBound}
\|(AG, AG' + A'G) \|_{\alpha,Z; E} \leq ( \|A'_0\|_{\mathcal{L}(E)} + \|(A,A')\|_{\alpha, Z;\mathcal{L}(E) } ) ( \|G'_0\|_{E} + \|(G,G')\|_{\alpha, Z; E} ) (1 + \|Z\|_{\alpha})^2 . 
\end{equation}
\end{lemma}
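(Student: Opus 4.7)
The strategy is to verify directly from Definition \ref{def:ControlledPath} that $(AG,\,AG'+A'G)$ is a controlled path, and then to track constants. The starting point is the elementary algebraic identity
\[
\delta(AG)_{st} = (\delta A_{st})G_s + A_s(\delta G_{st}) + (\delta A_{st})(\delta G_{st}),
\]
into which I would substitute the controlled-path expansions $\delta A_{st} = A'_s Z_{st} + A^{\natural}_{st}$ and $\delta G_{st} = G'_s Z_{st} + G^{\natural}_{st}$ supplied by the hypotheses. Separating out the term linear in $Z_{st}$ singles out the candidate Gubinelli derivative $AG'+A'G$ and leaves the remainder
\[
R_{st} := A_s G^{\natural}_{st} + A^{\natural}_{st} G_s + (\delta A_{st})(\delta G_{st}).
\]

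The first defining estimate $\|R_{st}\|_E \lesssim |t-s|^{2\alpha}$ then follows term by term: the first two terms use $\|G^{\natural}\|_{2\alpha} \leq \|(G,G')\|_{\alpha,Z;E}$ and $\|A^{\natural}\|_{2\alpha} \leq \|(A,A')\|_{\alpha,Z;\mathcal{L}(E)}$, while the cross term combines the two $\alpha$-Hölder increments $\delta A$ and $\delta G$ to give a $|t-s|^{2\alpha}$ bound. The second estimate $\|\delta(AG'+A'G)_{st}\|_E \lesssim |t-s|^{\alpha}$ is handled analogously: expand $\delta(AG')_{st} = (\delta A_{st})G'_t + A_s(\delta G'_{st})$ and its symmetric counterpart for $A'G$, and use that each of $\delta A,\delta A',\delta G,\delta G'$ is $\alpha$-Hölder.

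The final step is to collect constants so as to arrive at (\ref{abstractMultiplierBound}). The two standard interpolations I would use are
\[
\|A'\|_{\infty;\mathcal{L}(E)} \leq \|A'_0\|_{\mathcal{L}(E)} + T^{\alpha}\|(A,A')\|_{\alpha,Z;\mathcal{L}(E)}
\]
and
\[
\|\delta A\|_{\alpha;\mathcal{L}(E)} \leq \|A'\|_{\infty;\mathcal{L}(E)}\|Z\|_{\alpha} + T^{\alpha}\|(A,A')\|_{\alpha,Z;\mathcal{L}(E)},
\]
which together yield $\|\delta A\|_{\alpha;\mathcal{L}(E)} \lesssim (\|A'_0\|+\|(A,A')\|_{\alpha,Z;\mathcal{L}(E)})(1+\|Z\|_{\alpha})$, and analogously for $G$. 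Pairing the $A$-factor and the $G$-factor in each of the three remainder terms (and likewise in $\delta(AG'+A'G)$) then produces the quadratic product on the right-hand side of (\ref{abstractMultiplierBound}), with the factor $(1+\|Z\|_{\alpha})^2$ arising from the cross term $(\delta A_{st})(\delta G_{st})$, which carries one copy of $(1+\|Z\|_{\alpha})$ from each of the two $\alpha$-Hölder increments.

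There is no conceptual obstacle: this is essentially a Leibniz rule at the level of controlled paths, in direct analogy with the elementary chain/product computation for smooth paths. The only real work is the bookkeeping of constants, making sure that all sup-norm dependencies on the Gubinelli derivatives reduce, via the interpolations above, to the data $\|A'_0\|$, $\|G'_0\|$, $\|(A,A')\|_{\alpha,Z;\mathcal{L}(E)}$, $\|(G,G')\|_{\alpha,Z;E}$ and $\|Z\|_{\alpha}$ that appear on the right-hand side of the stated bound.
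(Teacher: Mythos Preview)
Your approach is correct and is exactly the direct Leibniz-rule verification the paper has in mind: the paper gives no proof at all, merely stating beforehand that ``it is straightforward to see'' the result, so your expansion of $\delta(AG)_{st}$ and term-by-term estimation is precisely what is intended.

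One small caveat on the bookkeeping: the remainder terms $A_sG^{\natural}_{st}$ and $A^{\natural}_{st}G_s$ require bounds on $\|A\|_{\infty;\mathcal{L}(E)}$ and $\|G\|_{\infty;E}$, not just on the Gubinelli derivatives $A',G'$; these sup-norms in turn need $\|A_0\|$ and $\|G_0\|$, which do not appear on the right-hand side of (\ref{abstractMultiplierBound}) as stated. This is a minor imprecision in the paper's bound rather than a flaw in your argument --- the qualitative conclusion that $(AG,AG'+A'G)$ is controlled, together with an estimate of the displayed shape up to an extra dependence on $\|A_0\|,\|G_0\|$ (or up to a multiplicative constant absorbing them), is exactly what your computation delivers and exactly what is used downstream in Corollary~\ref{L2Multiplier}.
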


Combined with Lemma \ref{L2Composition} we get the following. 
\begin{corollary} \label{L2Multiplier}
Assume $(Y,Y') \in \D_Z^{2 \alpha}([0,T]; C^1_b(\R^d;\R^d))$ satisfies $\zeta(Y) >0$ and $(A,A') \in \D^{2 \alpha}_{Z} ([0,T]; L^{\infty}(\R^d))$. For any $g \in W^{2,p}(\R^d)$ we have
$$
(g(Y)A, \nabla g(Y) Y' A + g(Y) A') \in \D_Z^{2 \alpha}([0,T]; L^p(\R^d))
$$
with the bound
\begin{align*}
\|(g(Y)A, \nabla g(Y) Y' A + g(Y) A')   \|_{\alpha, Z; L^p} &  \leq C \ND(Y)^{-1/p} ( 1 + \|Z\|_{\alpha})^4   ( \|A'_0\|_{L^{\infty}} + \|(A,A')\|_{\alpha, Z; L^{\infty}(\R^d)} ) \\
& \qquad \times (  \|Y_0'\|_{\infty} + \|(Y,Y')\|_{\alpha, Z;C_b(\R^d;\R^d)})^2  \|g\|_{2,p} 
\end{align*}
\end{corollary}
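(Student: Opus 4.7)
The plan is to combine Lemma \ref{L2Composition} with Lemma \ref{abstractMultiplier}, applied with $E = L^p(\R^d)$. First, I would invoke Lemma \ref{L2Composition} to produce the controlled path
\[
(g(Y), \nabla g(Y) Y') \in \D_Z^{2\alpha}([0,T]; L^p(\R^d))
\]
together with the bound
\[
\|(g(Y), \nabla g(Y) Y')\|_{\alpha, Z; L^p} \leq C \ND(Y)^{-1/p} (1 + \|Z\|_\alpha)^2 (\|Y'_0\|_\infty + \|(Y,Y')\|_{\alpha, Z; C_b})^2 \|g\|_{2,p}.
\]

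Next, the key observation is that pointwise multiplication gives a continuous embedding $L^\infty(\R^d) \hookrightarrow \mathcal{L}(L^p(\R^d))$ of operator norm at most the $L^\infty$ norm. Consequently $(A, A') \in \D_Z^{2\alpha}([0,T]; L^\infty(\R^d))$ can be viewed, with the same increments, as a controlled path in $\mathcal{L}(L^p(\R^d))$, with norm bounded by the corresponding $L^\infty$ norm. Now I would apply Lemma \ref{abstractMultiplier} with $E = L^p(\R^d)$, $(G, G') = (g(Y), \nabla g(Y) Y')$, and the multiplier $(A, A')$ interpreted as above. This yields the controlled path
\[
(A \cdot g(Y), \, A \cdot \nabla g(Y) Y' + A' \cdot g(Y)) \in \D_Z^{2\alpha}([0,T]; L^p(\R^d)),
\]
which is exactly the pair asserted in the corollary.

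Finally, combining the bound from Lemma \ref{L2Composition} recalled above with \eqref{abstractMultiplierBound} gives the stated estimate, picking up the extra factor $(1 + \|Z\|_\alpha)^2$ and the $(A, A')$-norm factor while raising the total power on $(1 + \|Z\|_\alpha)$ to $4$. There is no real obstacle here; the only point to be mildly careful about is the step of identifying $L^\infty$-multipliers with elements of $\mathcal{L}(L^p)$ and checking that the Gubinelli derivative is preserved under this identification, which is immediate since the pointwise multiplication is linear and the controlled-path relations are tested in $L^p$ against a fixed test function. The routine computation combining the two norm estimates is the only remaining ingredient, and it produces exactly the stated bound.
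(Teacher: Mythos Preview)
Your proposal is correct and matches the paper's intended argument exactly: the paper states this result as an immediate corollary obtained by combining Lemma \ref{L2Composition} with Lemma \ref{abstractMultiplier}, with no further proof given. Your only additional step---observing that $L^\infty(\R^d)$ embeds into $\mathcal{L}(L^p(\R^d))$ via pointwise multiplication so that Lemma \ref{abstractMultiplier} applies---is the natural (and only) thing to check, and the bookkeeping of the constants you describe reproduces the stated bound.
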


We shall show that the uniform bounds in \eqref{flowDeterminantCondition} holds for flows generated by RDE's. 
\begin{definition} \label{dualLift}
For a rough path $\ZZ$ denote by $\|\ZZ \|_{p-var, [s,t]}$ the homogeneous $p$-variation norm of $\ZZ$ over $[s,t]$. Define inductively
\begin{align*}
\tau_0 &= s \\
\tau_{i+1} & := \inf \{ u : \|\ZZ \|_{p-var, [\tau_i,u]} \geq 1 , \, \, \tau_i < u \leq t \} \wedge t
\end{align*}
and finally 
$$
N_{ [s,t]}(\ZZ) := \sup\{ n \in \mathbb{N} : \tau_n < t \} .
$$
\end{definition}

We shall need the following fact about $N_{[s,t]}(\ZZ)$. For a proof, see \cite{DOR}, \cite[Lemma 36]{DFS}.

\begin{proposition} \label{summary}
For an $\alpha$-H\"{o}lder (deterministic) rough path $\WW = (W, \mathbb{W})$, define the $\alpha$-H\"{o}lder rough path joint lift $\ZZ(\WW) = (Z, \mathbb{Z})$ where 
$$
Z_t = \left(\begin{array}{l}
B_t \\
W_t \\
\end{array} \right)
\hspace{.5cm} and \hspace{.5cm} 
\mathbb{Z}_{st} = \left(\begin{array}{cc}
\mathbb{B}_t & \int_s^t B_{sr} dW_r \\
\int_s^t W_{sr} dB_r & \mathbb{W}_{st} \\
\end{array} \right) .
$$
Then the random variable 
$$
\sup_{ \| \ZZ \|_{\alpha} \leq M} N_{[0,T]}(\ZZ(\WW))
$$
has Gaussian tails. Moreover, we have the continuity
$$
E[ \| \ZZ(\WW) - \ZZ(\tilde{\WW}) \|^p_{\alpha} ] \lesssim \| \WW - \tilde{\WW} \|^p_{\alpha} .
$$
\end{proposition}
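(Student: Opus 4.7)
The plan splits into the two assertions. For the Gaussian tail bound on $N_{[0,T]}(\ZZ(\WW))$, I would adapt the Cass--Litterer--Lyons style argument of \cite{DOR} and \cite[Lemma 36]{DFS} to the hybrid setting in which $\WW$ is deterministic and only $B$ is a Gaussian input. The key observation is that $N_{[0,T]}(\ZZ(\WW))$ is a measurable functional of the Brownian trajectory $B$ and, because $\WW$ is deterministic, the mixed components $\int_s^t W_{sr}\,dB_r$ and $\int_s^t B_{sr}\,dW_r = B_{st}W_{st} - \int_s^t W_{sr}\,dB_r$ depend linearly on $B$. Cameron--Martin translations $B \mapsto B + h$ therefore change each such integral by a deterministic Riemann--Stieltjes integral controlled by $\|h\|_{\mathrm{CM}}\|W\|_{\alpha}$; combined with the pure-Brownian Cameron--Martin estimate in \cite{DOR}, this yields a modulus of continuity for the functional $B \mapsto N_{[0,T]}(\ZZ(\WW))$ along Cameron--Martin directions that is uniform in $\|\WW\|_\alpha \leq M$. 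Borell's inequality on Wiener space then gives Gaussian tails with constants depending only on $M$.

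For the $L^p$-continuity, I would note that $\ZZ(\WW) - \ZZ(\tilde\WW)$ has vanishing $B$- and $\mathbb{B}$-components; the remaining entries are $W-\tilde W$, $\mathbb W - \tilde{\mathbb W}$, and the Wiener integrals $\int_s^t (W-\tilde W)_{sr}\, dB_r$ together with their Chen-symmetric partners. For the mixed term, It\^o's isometry yields the second-moment bound
$$
E\Bigl[\,\Bigl|\int_s^t (W-\tilde W)_{sr}\, dB_r\Bigr|^2\Bigr] \lesssim \|W-\tilde W\|_\alpha^2 \,|t-s|^{2\alpha+1},
$$
which by Gaussian moment equivalence extends to every $L^q$; a standard Kolmogorov / Garsia--Rodemich--Rumsey argument then promotes this to the required $2\alpha$-H\"older estimate on $\Delta([0,T])$ with linear dependence on $\|\WW-\tilde\WW\|_\alpha^p$. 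The pure $W$- and $\mathbb W$-coordinates contribute deterministically.

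The main obstacle is the first part. The naive bound $N_{[0,T]}(\ZZ) \lesssim 1 + \|\ZZ\|_{p\text{-var}}^p$ with $p = 1/\alpha > 2$ only reduces Gaussian tails for $N$ to $(2/p)$-exponential tails for $\|\ZZ\|_{p\text{-var}}$, which is strictly weaker than what is claimed. The Cameron--Martin argument must therefore be applied \emph{directly} to $N_{[0,T]}$ rather than to its $p$-variation majorant, exactly as in \cite{DOR, DFS}; the presence of the deterministic $\WW$ affects that calculation only through a linear-in-$B$ contribution whose Cameron--Martin coefficient is bounded uniformly by $M$, so the argument carries over with essentially cosmetic modifications.
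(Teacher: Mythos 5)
The paper does not give its own proof of Proposition~\ref{summary}: it cites \cite{DOR} and \cite[Lemma 36]{DFS} and moves on. Your sketch is a faithful reconstruction of the argument those references actually carry out, so you are in agreement with the paper's (delegated) approach. Two brief remarks.

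First, on the tail estimate: you correctly identify that the Cass--Litterer--Lyons argument must target $N_{[0,T]}$ directly rather than its $p$-variation majorant, and that the deterministic rough path $\WW$ enters only through terms that are affine in $B$, so Cameron--Martin shifts $B\mapsto B+h$ perturb the joint lift in a controlled way (by Young's inequality and Cauchy--Schwarz, $\lvert\int_s^t W_{sr}\,dh_r\rvert\lesssim\|W\|_\alpha\|h\|_{H^1}|t-s|^{\alpha+1/2}$, which dominates $|t-s|^{2\alpha}$ for $\alpha\le 1/2$). One small precision worth adding: \emph{Borell's inequality alone} is not the whole story; what makes the argument go is the CLL-type translation inequality $N_{[0,T]}(T_h\ZZ)\lesssim 1+N_{[0,T]}(\ZZ)+\|h\|_{\mathcal H}^2$, uniform in $\|\WW\|_\alpha\le M$, which is exactly the quantitative output of your Cameron--Martin computation; the isoperimetric step is then applied to that. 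This is what \cite{DOR} and \cite[Lemma 36]{DFS} do, so you have the right mechanism, just phrase the deliverable as the translation inequality for $N$ rather than a generic modulus-of-continuity claim.

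Second, on the $L^p$-continuity: your decomposition (the $B$ and $\mathbb B$ blocks cancel; the $W$ and $\mathbb W$ blocks are deterministic; the cross blocks are Wiener integrals linear in $W-\tilde W$) is correct, and It\^o isometry plus Gaussian hypercontractivity plus a Kolmogorov/GRR step in two parameters (using Chen's relation to reduce the two-parameter object to a genuine path before applying Kolmogorov, or applying a two-parameter Kolmogorov criterion directly) gives the stated bound. This again matches \cite{DOR}. No gap; the proposal is correct and essentially coincides with the route the paper delegates to its references.
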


We remark that for the rough path constructed above, the random variable $\|\ZZ\|_{p-var}^p$ does not have Gaussian tails, which makes it an ill-suited norm for studying linear RDE's under expectations. However, the next result show how $N_{[0,T]}(\ZZ)$ is an appropriate tool for these equations.

\begin{proposition} \label{summary2}
Assume $\ZZ$ is an $\alpha$-H\"{o}lder 
geometric rough path, $\alpha \in (\frac{1}{3}, \frac{1}{2})$. 

\begin{enumerate}

\item
For a collection of vector fields $V = (V_i)_{i=1}^e$  in $C^3_b(\R^d)$, there exists a unique solution $(Y,Y') = (Y, V(Y))$ to the RDE 
$$
dY = V(Y) d\ZZ
$$
which satisfies
$$
\|(Y,Y')\|_{\alpha, Z} \leq C(1 + \| \ZZ \|_{\alpha})^3 ,
$$
and
$$
\|Y \|_{\frac{1}{\alpha} - var} \leq C(1 + N_{[0,T]}(\ZZ)).
$$

For proofs, see \cite[Proposition 8.3]{FH14} and \cite[Lemma 4, Corollary 3]{FR13} respectively.

\item

For a collection of affine vector fields $V = (V_i)_{i=1}^e$, i.e. $V_i(x) = A_ix + b_i$, there exists a unique solution $(Y,Y') = (Y, V(Y))$ to the RDE 
$$
dY = V(Y) d\ZZ
$$

which satisfies

$$
\|Y\|_{\alpha} \leq C(1 + |y_0|) \|\ZZ\|_{\alpha} \exp\{ C N_{[0,T]}(\ZZ) \} 
$$
and
$$
\|(Y,Y') \|_{Z, \alpha } \leq C\exp\{ C N_{[0,T]}(\ZZ) \} ( \|\ZZ\|_{\alpha}^2 \vee \|\ZZ\|_{\alpha}^3) .
$$
For a proof, see \cite[Lemma 26]{DFS}.

\end{enumerate}

\end{proposition}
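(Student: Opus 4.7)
Both claims are classical results in the rough-path literature, and my plan is to reduce the argument to existing statements while verifying the stated quantitative dependence on $\|\ZZ\|_\alpha$ and $N_{[0,T]}(\ZZ)$.

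For part (1), I would first invoke the standard Davie/Lyons/Friz--Victoir existence and uniqueness theory for RDEs with $C^3_b$ vector fields: a fixed-point argument in the space of controlled paths $\D_Z^{2\alpha}$, run on an interval short enough that $\|\ZZ\|_\alpha |t-s|^\alpha$ is bounded by a small universal constant, produces a unique local solution $(Y,V(Y))$. Concatenating these local solutions via Chen's relation yields global existence and uniqueness together with the cubic bound $\|(Y,Y')\|_{\alpha,Z} \leq C(1+\|\ZZ\|_\alpha)^3$, exactly as in \cite[Prop.~8.3]{FH14}, the exponent $3$ coming from the usual accounting when passing from local to global H\"older seminorms via the sewing lemma. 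For the $p$-variation estimate with $p=1/\alpha$, the quantity $N_{[0,T]}(\ZZ)$ is by definition the number of greedy intervals needed to partition $[0,T]$ into pieces on which $\|\ZZ\|_{p\text{-var}}\leq 1$. On each such piece the local RDE estimate gives $\|Y\|_{p\text{-var},[\tau_i,\tau_{i+1}]} \leq C$ for a universal $C$, and superadditivity of $p$-variation yields $\|Y\|_{p\text{-var},[0,T]}^p \leq C(1+N_{[0,T]}(\ZZ))$, reproducing \cite[Lem.~4, Cor.~3]{FR13}.

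For part (2), the coefficients $V_i(x)=A_ix+b_i$ are not in $C^3_b$, so one cannot appeal to part (1) directly; instead one exploits the linear structure. Following \cite[Lem.~26]{DFS}, on any greedy interval $[\tau_i,\tau_{i+1}]$ where $\|\ZZ\|_{p\text{-var}}\leq 1$ a local Picard iteration using the explicit iterated integrals of $\ZZ$ against polynomials in $Y$ yields a multiplicative bound of the form $|Y_{\tau_{i+1}}| \leq C|Y_{\tau_i}|+C$. Iterating over the $N_{[0,T]}(\ZZ)+1$ greedy pieces produces the exponential prefactor $\exp\{CN_{[0,T]}(\ZZ)\}$ and the linear-in-$|y_0|$ bound on $\|Y\|_\alpha$; the estimate on $\|(Y,Y')\|_{Z,\alpha}$ follows by inserting this uniform $L^\infty$-type control into the defining local expansion of the controlled path, with the powers $\|\ZZ\|_\alpha^2 \vee \|\ZZ\|_\alpha^3$ reflecting the order of the rough-path lift and of its Gubinelli derivative.

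The main obstacle is precisely the affine case, since the unboundedness of the vector fields breaks the usual a priori bounds that made part (1) routine. The key point is that linearity allows one to write the local solution as a convergent rough Dyson-type series on each greedy interval, and the resulting multiplicative bounds compound only over $N_{[0,T]}(\ZZ)$ rather than over the (non-integrable) $p$-variation norm of $\ZZ$; this is exactly why $N_{[0,T]}(\ZZ)$ is the right gauge for linear RDEs taken under expectations, as is used throughout the Feynman--Kac arguments earlier in the paper.
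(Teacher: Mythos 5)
The paper provides no proof of this proposition: it is a collection of pointers to the literature, namely \cite[Prop.~8.3]{FH14}, \cite[Lem.~4, Cor.~3]{FR13} and \cite[Lem.~26]{DFS}. Your sketch is a reasonable reconstruction of what those references actually prove, and the overall architecture --- local controlled-path estimates on greedy intervals of unit $p$-variation, then concatenation indexed by $N_{[0,T]}(\ZZ)$ --- matches the cited arguments, including the multiplicative iteration in the affine case.

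There is, however, one genuine error in the step combining the local bounds. You write that ``superadditivity of $p$-variation yields $\|Y\|_{p\text{-var},[0,T]}^p \leq C(1+N_{[0,T]}(\ZZ))$.'' Superadditivity concerns the control function $\omega(s,t):=\|Y\|_{p\text{-var},[s,t]}^p$ and reads $\omega(s,u)+\omega(u,t)\le\omega(s,t)$; applied to the greedy partition it gives $\sum_i \|Y\|^p_{p\text{-var},[\tau_i,\tau_{i+1}]} \le \|Y\|^p_{p\text{-var},[0,T]}$, which is the \emph{wrong direction} and yields nothing. What the concatenation step actually uses is the triangle inequality for the $p$-variation \emph{norm} (not its $p$-th power), which holds for $p\ge 1$ because $x\mapsto x^{1/p}$ is subadditive on $[0,\infty)$: for $s<u<t$ one has $\|Y\|_{p\text{-var},[s,t]}\le \|Y\|_{p\text{-var},[s,u]}+\|Y\|_{p\text{-var},[u,t]}$. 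Iterating this over the greedy intervals gives $\|Y\|_{p\text{-var},[0,T]}\le \sum_i \|Y\|_{p\text{-var},[\tau_i,\tau_{i+1}]}\le C\,(1+N_{[0,T]}(\ZZ))$, which is precisely the bound asserted in the proposition. Your conclusion is right, but the cited mechanism is not the one that delivers it, and as written the intermediate inequality is false. Everything else, including the treatment of the affine case via bounded multiplicative jumps across greedy intervals, is in line with \cite[Lem.~26]{DFS}.
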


\begin{lemma} \label{roughLiouvilleLemma}
Denote by $\Psi$ the flow generated by the RDE
$$
d\Psi_{s,t}(x) = V(\Psi_{s,t}(x)) d\ZZ_t \hspace{.5cm} \Psi_{s,s}(x) = x .
$$
Then there exists $C>0$ such that we have
$$
C^{-1} \exp \{-  C N_{ [0,T]}( \mathbf{Z} ) \} \leq  det( \nabla \Psi_{t,T}^{-1}(x) ) \leq C \exp \{ C N_{ [0,T]}( \mathbf{Z} ) \}
$$

\end{lemma}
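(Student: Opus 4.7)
The plan is to invoke the rough Liouville formula, which for a geometric rough path plays the exact role of the classical divergence formula for ODE flows. Writing $J_{t,T}(x) = \nabla \Psi_{t,T}(x)$, differentiation of the RDE for $\Psi_{t,T}$ in the spatial variable yields the linear RDE
\begin{equation*}
dJ_{t,T}(x) = \nabla V_i(\Psi_{t,T}(x)) J_{t,T}(x) \, d\mathbf{Z}^i_t,
\end{equation*}
and applying Jacobi's formula together with the chain rule for geometric rough paths (which is legitimate because $\mathbf{Z}$ is geometric) gives
\begin{equation*}
\log \det J_{t,T}(x) = \int_t^T \mathrm{div}\, V_i(\Psi_{t,r}(x)) \, d\mathbf{Z}^i_r .
\end{equation*}
Since $V_i \in C^3_b$, the integrand $r \mapsto \mathrm{div}\, V_i(\Psi_{t,r}(x))$ is a controlled rough path whose Gubinelli derivative and remainder are bounded uniformly in $x$ (by composition estimates applied to $\mathrm{div}\, V_i$ together with Proposition \ref{summary2}(1)).

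The central step is then to bound this rough integral linearly in $N_{[0,T]}(\mathbf{Z})$. By the definition of $N_{[s,t]}(\mathbf{Z})$ the interval $[0,T]$ is partitioned into at most $N_{[0,T]}(\mathbf{Z})+1$ consecutive subintervals on each of which $\|\mathbf{Z}\|_{p\mathrm{-var}} \le 1$. On each such subinterval the standard sewing-lemma estimate for the rough integral of a bounded controlled path gives a uniform bound independent of $x$, so summing over the partition yields
\begin{equation*}
\left| \int_t^T \mathrm{div}\, V_i(\Psi_{t,r}(x)) \, d\mathbf{Z}^i_r \right| \le C(1 + N_{[0,T]}(\mathbf{Z})),
\end{equation*}
uniformly in $t \in [0,T]$ and $x \in \mathbb{R}^d$.

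Exponentiating yields two-sided bounds $C^{-1} e^{-C N_{[0,T]}(\mathbf{Z})} \le \det J_{t,T}(y) \le C e^{C N_{[0,T]}(\mathbf{Z})}$ for every $y$. Since $\det \nabla \Psi_{t,T}^{-1}(x) = 1 / \det J_{t,T}(\Psi_{t,T}^{-1}(x))$, taking reciprocals gives the claimed estimate. The main technical obstacle is the uniform-in-$x$ bound on the rough integral: one must verify that the controlled-path norm of $\mathrm{div}\, V_i \circ \Psi_{t,\cdot}(x)$ is controlled by a quantity depending only on $\|V\|_{C^3_b}$ and the rough-path bounds of $\Psi$ from Proposition \ref{summary2}(1), and in particular does not depend on $x$; after that, the reduction to bounded $p$-variation subintervals is routine.
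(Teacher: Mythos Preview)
Your argument is correct in spirit and reaches the same conclusion, but the route differs from the paper's. The paper works directly with the inverse flow: setting $D_t := \det(\nabla \Psi_{T-t,T}^{-1}(y))$, it applies the rough It\^o formula to the determinant function to obtain the \emph{linear} RDE
\[
dD_t = D_t\, (\mathrm{div}\, V_i)(\Psi_{T-t,T}^{-1}(y))\, d\overleftarrow{\mathbf{Z}}^i_t,
\]
and then invokes the ready-made linear RDE estimate of Proposition~\ref{summary2}(2) (i.e.\ \cite[Lemma~26]{DFS}) to get $\|D\|_\alpha \le C\|\mathbf{Z}\|_\alpha \exp\{C N_{[0,T]}(\mathbf{Z})\}$; the lower bound follows because $D^{-1}$ solves the same type of linear RDE with the sign flipped. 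You instead pass to $\log \det J$, obtain an additive Liouville formula, and bound the resulting rough integral by hand via the greedy partition into intervals of unit $p$-variation. Your approach is more self-contained and yields both bounds simultaneously, whereas the paper delegates the work to the linear RDE machinery; the trade-off is that the paper's citation hides exactly the partition argument you spell out.

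One point to tighten: when you bound the controlled-path norm of $\mathrm{div}\, V_i \circ \Psi_{t,\cdot}(x)$ on each subinterval, do not cite Proposition~\ref{summary2}(1), which gives global $\alpha$-H\"older bounds in terms of $\|\mathbf{Z}\|_\alpha$. What you actually need is the local $p$-variation estimate: on any interval with $\|\mathbf{Z}\|_{p\text{-var}} \le 1$, the solution of the RDE has controlled norm bounded by a constant depending only on $\|V\|_{C^3_b}$, not on $\|\mathbf{Z}\|_\alpha$. This is standard (and is precisely what underlies the exponential-in-$N$ bound of \cite[Lemma~26]{DFS}), but the reference you give would introduce a spurious $\|\mathbf{Z}\|_\alpha$ factor into the per-interval bound.
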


\begin{proof}

The inverse flow and the Jacobian satisfies  (see e.g. \cite[Lemma 27]{DFS})

$$
d\Psi_{T - t, T}^{-1}(y) = V_i(\Psi_{T - t, T}^{-1}(y)) d\overleftarrow{\mathbf{Z}}^i_t \hspace{.5cm} \Psi_{T , T}^{-1}(y) = y
$$
and 
$$
d \nabla \Psi_{T - t, T}^{-1}(y) = \nabla V_i(\Psi_{T - t, T}^{-1}(y))  \nabla \Psi_{T - t, T}^{-1}(y) d\overleftarrow{\mathbf{Z}}^i_t \hspace{.5cm} \nabla \Psi_{T , T}^{-1}(y) = I_{d \times d} . 
$$
Denote by $D_t := det(\nabla \Psi_{T - t, T}^{-1}(y))$. Then by the Rough Ito formula, since $ det( \cdot) : \R^{d \times d} \rightarrow \R$ has derivative at $A$ equal to $det(A) Tr(A^{-1} M)$ for $det(A) \neq 0$, we get
\begin{align*}
dD_t & = D_t Tr \left( (\nabla \Psi_{T - t, T}^{-1}(y))^{-1} \nabla V_i(\Psi_{T - t, T}^{-1}(y))  \nabla \Psi_{T - t, T}^{-1}(y) \right) d\overleftarrow{\mathbf{Z}}^i_t  \\
& = D_t (div V_i)(\Psi_{T - t, T}^{-1}(y))  d\overleftarrow{\mathbf{Z}}^i_t 
\end{align*}
where we have used $Tr(AB) = Tr(BA)$.

From \cite[Lemma 26]{DFS} we get
$$
\| D \|_{ \alpha, [0,T]} \leq C \|\mathbf{Z} \|_{\alpha, [0,T]} \exp \{ C N_{ [0,T]}( \mathbf{Z}) \} .
$$

We clearly have $D_0 = 1$ and so 
$$
|D_t| \leq |D_t - D_0| + 1 \leq T^{\alpha} \|D\|_{\alpha, [0,T] } + 1 \leq C \|\mathbf{Z} \|_{\alpha, [0,T]} \exp \{ C N_{[0,T]}( \mathbf{Z}) \}
$$
provided $C$ is large enough. To see the reversed inequality, notice that $D^{-1}$ solves
$$
dD_t^{-1} = - D_t^{-1} (div V_i)(\Psi_{T - t, T}^{-1}(y))  d\overleftarrow{\mathbf{Z}}^i_t, 
$$
so that $|D_t|^{-1} \leq C \|\mathbf{Z} \|_{\alpha, [0,T]} \exp \{ C N_{ [0,T]}( \mathbf{Z}) \}$ by similar reasoning as above.
\end{proof}

\begin{lemma}
Suppose $(Y,Y') = (Y(x), Y(x)') = (\Psi_{T - \cdot, T}(x)^{-1}, V(\Psi_{T - \cdot, T}(x)^{-1})) $ where $\Psi$ is the flow of the RDE $d\Psi = V(\Psi) dZ$. Then \eqref{flowDeterminantCondition} holds. More specifically, the exists $C>0$ such that 
$$
C \exp \{-  C N_{  [0,T]}( \mathbf{Z} ) \} \leq  \inf_{s,t \in [0,T], x \in \R^d, \theta \in [0,1]}  det(\nabla ( Y_s(x) + \theta \delta Y_{st}(x) ) ).
$$

\end{lemma}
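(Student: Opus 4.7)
The plan is to exploit the flow structure of $Y_t = \Psi_{T-t, T}^{-1}$ to factor the Jacobian of the interpolating map into two well-controlled pieces. By the flow identity $\Psi_{T-t, T}^{-1} = \Psi_{T-t, T-s}^{-1} \circ \Psi_{T-s, T}^{-1}$, setting $\Xi_{s,t} := \Psi_{T-t, T-s}^{-1}$ gives $Y_t = \Xi_{s,t} \circ Y_s$, and hence
$$
Y_s(x) + \theta\,\delta Y_{st}(x) = Y_s(x) + \theta\bigl(\Xi_{s,t}(Y_s(x)) - Y_s(x)\bigr).
$$
Differentiating in $x$ via the chain rule yields the factorization
$$
\nabla\bigl(Y_s + \theta\,\delta Y_{st}\bigr)(x) = \bigl[(1-\theta)\,I + \theta\,\nabla\Xi_{s,t}(Y_s(x))\bigr]\,\nabla Y_s(x),
$$
so that the target determinant splits as
$\det\nabla(Y_s + \theta\,\delta Y_{st})(x) = \det\bigl[(1-\theta)I + \theta\,\nabla\Xi_{s,t}(Y_s(x))\bigr]\cdot\det \nabla Y_s(x).$

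The second factor is handled directly by Lemma~\ref{roughLiouvilleLemma}, which already gives $\det\nabla Y_s(x) \ge C^{-1}\exp\{-CN_{[0,T]}(\mathbf{Z})\}$. For the first factor, the key observation is that $\nabla\Xi_{s,t}$ satisfies the linear (reversed) RDE
$d\nabla\Xi_r = \nabla V_i(\Xi_r)\nabla\Xi_r\,d\overleftarrow{\mathbf{Z}}^i_r$, $\nabla\Xi_{s,s} = I$. Applying the greedy partition $\{\tau_i\}$ of Definition~\ref{dualLift} decomposes $[0,T]$ into $N_{[0,T]}(\mathbf{Z})+1$ sub-intervals of controlled $p$-variation, and the standard linear-RDE estimates of Proposition~\ref{summary2} (with the threshold in Definition~\ref{dualLift} chosen sufficiently small) then give $\|\nabla\Xi_{s,t} - I\|_\infty \le 1/2$ whenever $(s,t)$ lies within a single sub-interval. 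On such sub-intervals the convex combination $(1-\theta)I + \theta\,\nabla\Xi_{s,t}(Y_s(x))$ stays in a fixed neighbourhood of the identity and its determinant is uniformly bounded below by a positive constant independent of $s,t,x,\theta$.

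To extend the bound to general $(s,t)\in[0,T]^2$, one iterates the flow decomposition along the partition points, writing $\Xi_{s,t} = \Xi_{\tau_k,t}\circ \cdots \circ \Xi_{s,\tau_{i+1}}$ and using the multiplicativity $\det\nabla\Xi_{s,t}(y) = \prod_j \det\nabla\Xi_{\tau_j,\tau_{j+1}}(\cdots)$. Combined with the same rough It\^o computation as in the proof of Lemma~\ref{roughLiouvilleLemma}, where $D := \det\nabla\Xi$ satisfies the scalar linear rough equation $dD_r = D_r\,\mathrm{div}\,V_i(\Xi_r)\,d\overleftarrow{\mathbf{Z}}^i_r$, the global determinant accumulates precisely an exponential factor of the form $\exp\{-CN_{[0,T]}(\mathbf{Z})\}$ across the at most $N_{[0,T]}(\mathbf{Z})+1$ sub-intervals, yielding the claimed uniform lower bound. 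The main obstacle is this propagation step, since linear interpolation of non-commuting matrices is not stable under composition; the remedy is to reduce the global linear combination $(1-\theta)I + \theta\,\nabla\Xi_{s,t}$ to the product of its single-sub-interval counterparts—each close to the identity and therefore uniformly invertible—and to control the resulting matrix product multiplicatively through the linear rough equation satisfied by the Jacobian determinant.
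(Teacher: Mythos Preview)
Your factorization
\[
\nabla\bigl(Y_s + \theta\,\delta Y_{st}\bigr)(x) = \bigl[(1-\theta)I + \theta\,\nabla\Xi_{s,t}(Y_s(x))\bigr]\,\nabla Y_s(x)
\]
is correct, and together with Lemma~\ref{roughLiouvilleLemma} it cleanly isolates the problem as a lower bound on $\det\bigl[(1-\theta)I + \theta\,\nabla\Xi_{s,t}(y)\bigr]$. Your local argument on a single greedy sub-interval (where $\nabla\Xi_{s,t}$ is close to $I$) is also fine.

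The gap is in the propagation step. You write that the remedy is to ``reduce the global linear combination $(1-\theta)I + \theta\,\nabla\Xi_{s,t}$ to the product of its single-sub-interval counterparts'', but this reduction does not exist: if $\nabla\Xi_{s,t} = B_k\cdots B_1$ with each $B_j$ close to $I$, then $(1-\theta)I + \theta\,B_k\cdots B_1$ is \emph{not} the product $\prod_j\bigl[(1-\theta)I + \theta B_j\bigr]$, nor is there any multiplicative relation between them. Likewise, the scalar linear rough equation satisfied by $\det\nabla\Xi_{s,t}$ does not transfer to $\det\bigl[(1-\theta)I + \theta\,\nabla\Xi_{s,t}\bigr]$: if $M_t := (1-\theta)I + \theta\,\nabla\Xi_{s,t}(y)$ then $dM_t = \nabla V_i(\Xi_{s,t}(y))\bigl(M_t - (1-\theta)I\bigr)\,d\overleftarrow{\mathbf Z}^i_t$, which is affine rather than linear in $M_t$, and the resulting equation for $\det M_t$ involves $M_t^{-1}$ and cannot be controlled without already knowing $\det M_t$ stays away from zero. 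So as written, the argument covers only pairs $(s,t)$ lying in one greedy sub-interval and does not extend.

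The paper closes this gap by a different device: it observes that the interpolated map $Y^{s,\theta}_t := Y_s + \theta\,\delta Y_{st}$ is itself the flow of an RDE, namely $dY^{s,\theta}_t = V_\theta(Y^{s,\theta}_t)\,d\overleftarrow{\mathbf Z}_t$ with rescaled fields $V_\theta(y) = \theta\,V\bigl(Y_s + \theta^{-1}(y-Y_s)\bigr)$. The Liouville formula then gives directly
\[
\det\nabla Y^{s,\theta}_t = \det\nabla Y_s \cdot \exp\Bigl\{\int_s^t \mathrm{div}(V_{\theta,j})(Y^{s,\theta}_r)\,d\overleftarrow{Z}^j_r\Bigr\},
\]
and since $\mathrm{div}(V_{\theta,j})(Y^{s,\theta}_r) = \mathrm{div}(V_j)(Y_r)$, the exponent is controlled globally in $(s,t)$. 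The price is that $V_\theta$ is only uniformly $C^3_b$ for $\theta\in[\theta_0,1]$, so the paper handles small $\theta$ separately by a compactness/continuity argument near $\theta=0$. Your factorization could in fact be combined with this $V_\theta$-idea (applied to $\Xi_{s,t}$ rather than $Y_t$), but some such flow-level argument is needed; the purely multiplicative strategy you outline does not go through.
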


\begin{proof}

For any $\theta_0 > 0$ we have 
\begin{align*}
\inf_{s,t \in [0,T],  x \in \R^d, \theta \in [0,1]}  & det(\nabla ( Y_s(x) + \theta \delta Y_{st}(x) ) )   \\
 & = \inf_{s,t \in [0,T], x \in \R^d, \theta \in [0, \theta_0]}  det(\nabla ( Y_s(x) + \theta \delta Y_{st}(x) ) ) \\
  & \qquad  \vee \inf_{s,t \in [0,T], x \in \R^d, \theta \in [\theta_0,1]}  det(\nabla ( Y_s(x) + \theta \delta Y_{st}(x) ) ) .
\end{align*}

Step 1: We first choose $\theta_0$ such that 
\begin{equation} \label{uniformDeterminantContinuity}
\inf_{s,t \in [0,T], x \in \R^d, \theta \in [0, \theta_0]}  det(\nabla ( Y_s(x) + \theta \delta Y_{st}(x) ) ) \geq \frac{C}{2} \exp \{-  C N_{  [0,T]}( \mathbf{Z} ) \} .
\end{equation}
To do this, notice first that by Lemma \ref{roughLiouvilleLemma} we have
$$
\inf_{s \in [0,T], x \in \R^d}  det(\nabla  Y_s(x) ) \geq C \exp \{-  C N_{  [0,T]}( \mathbf{Z} ) \} . 
$$
Moreover, we know that $ \delta \nabla Y : [0,T]^2 \times \R^d \rightarrow \R^{d \times d}$ is a bounded and continuous map. Assume now that \eqref{uniformDeterminantContinuity} does not holds, which means that there exists $(s_n,t_n,x_n, \theta_n) \in [0,T]^2 \times \R^d \times [0,1]$ with $\lim_n \theta_n = 0$ such that 
$$
 det(\nabla ( Y_{s_n}(x_n) + \theta_n \delta Y_{s_n t_n }(x_n) ) ) \leq \frac{C}{2} \exp \{-  C N_{  [0,T]}( \mathbf{Z} ) \} + \frac{1}{n}
$$
By the boundedness of $\delta \nabla Y$ we have for a sub-sequence that $\lim_k \nabla  Y_{s_{n_k}}(x_{n_k}) = M \in \R^{d \times d}$ and $\lim_k \delta \nabla  Y_{s_{n_k} t_{n_k}}(x_{n_k}) = N \in \R^{d \times d}$. By the continuity of the determinant we get
\begin{align*}
\frac{C}{2} \exp \{-  C N_{  [0,T]}( \mathbf{Z} ) \} & \geq \lim_k det(\nabla ( Y_{s_{n_k}}(x_{n_k}) + \theta_{n_k} \delta Y_{s_{n_k} t_{n_k} }(x_{n_k}) ) ) = det(M)  \\
 & = \lim_k det(\nabla  Y_{s_{n_k}}(x_{n_k} ) ) \geq \inf_{s \in [0,T], x \in \R^d}  det(\nabla  Y_s(x) ) \geq C \exp \{-  C N_{  [0,T]}( \mathbf{Z} ) \}
\end{align*}
which is a contradiction. 

Step 2: We show that for any $\theta_0 > 0$ we have
$$
\inf_{s,t \in [0,T], x \in \R^d, \theta \in [\theta_0,1]}  det(\nabla ( Y_s(x) + \theta \delta Y_{st}(x) ) )  \geq \frac{C}{2} \exp \{-  C N_{  [0,T]}( \mathbf{Z} ) \}.
$$

For fixed $\theta$ define $Y_t^{s,\theta} : = Y_s + \theta \delta Y_{st}$, for $t \geq s$ so that  
\begin{align*}
d Y_t^{s,\theta} = V_{\theta , j}( Y_t^{s,\theta}) d\overleftarrow{Z}^j_t , \hspace{.5cm} Y_s^{s, \theta} = Y_s  
\end{align*}
where $V_{\theta}(y) := \theta V(Y_s +  \theta^{-1} (y - Y_s))$. As in the proof of the previous lemma, we get
$$
det(\nabla Y_t^{s,\theta} ) = det( \nabla Y_s ) \exp \left\{ tr \left( \int_0^t \nabla V_{\theta , j}(Y_r^{s,\theta} ) dZ^j_r \right)   \right\} =  det( \nabla Y_s ) \exp \left\{   \int_0^t div(V_{\theta,j})(Y_r^{s,\theta} ) dZ^j_r    \right\}. 
$$

Since $V_{\theta}$ is bounded in $C^3(\R^d)$ uniformly in $\theta \in [\theta_0,1]$, the result follows.
\end{proof}

\begin{lemma} \label{LemmaL2Composition}
If $\Psi_{t,s}(x)$ is the flow of the RDE
$$
d\Psi_{t,s}(x) =  V(\Psi_{t,s}(x)) d\ZZ_s , \, \Psi_{t,t}(x) = x
$$ 
then for any $g \in W^{2,p}(\R^d)$ the pair
$$
\left( g( \Psi_{\cdot,T}^{-1}( \cdot )) det( \nabla \Psi_{ \cdot,T}^{-1}( \cdot )) , div(g V_j)(\Psi_{\cdot ,T}^{-1})  det(\nabla \Psi_{ \cdot ,T}^{-1})  \right) \in \mathscr{D}_{Z}^{2 \alpha}([0,T]; L^p(\R^d))
$$
and we have the estimate

\begin{align*}
\big\| \Big(g(\Psi_{ \cdot,T}^{-1}) & det( \nabla \Psi_{ \cdot,T}^{-1}),   div(g V_j)(\Psi_{ \cdot,T}^{-1})  det(\nabla \Psi_{ \cdot,T}^{-1}) \big) \big\|_{\alpha, Z; L^p(\R^d)} \\ 
& \leq  C \exp \{ C N_{  [0,T]}( \ZZ) \}  ( 1 + \| \ZZ\|_{\alpha})^{k}(1 + \|V\|_{\infty})^2 \|g\|_{2,p}
\end{align*} 
for some constants $C$ and $k$.

\end{lemma}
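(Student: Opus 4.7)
The plan is to view $g(\Psi_{\cdot,T}^{-1})\det(\nabla\Psi_{\cdot,T}^{-1})$ as a product of two controlled paths and appeal to the composition and multiplier lemmas already established (Lemma \ref{L2Composition} and Lemma \ref{abstractMultiplier}/Corollary \ref{L2Multiplier}). Set $Y_s := \Psi_{s,T}^{-1}$ and $D_s := \det(\nabla Y_s)$. First, I would identify the controlled path structure of $Y$: from the backward flow considerations used in the proof of Lemma \ref{roughLiouvilleLemma} (combined with Proposition \ref{summary2}, (i)), $Y$ is a $C^3_b(\mathbb{R}^d;\mathbb{R}^d)$-valued controlled path with Gubinelli derivative $(Y'_s)^j = V_j(Y_s)$, and with $C^3_b$-controlled norm bounded by $C(1+\|\mathbf{Z}\|_\alpha)^k$ times $\exp\{C N_{[0,T]}(\mathbf{Z})\}$ after iterating the flow bounds to get $\nabla Y, \nabla^2 Y, \nabla^3 Y$.

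Next, the Jacobian $D$: by the computation in Lemma \ref{roughLiouvilleLemma} (and its reformulation in forward time $s$), $D$ satisfies an RDE whose Gubinelli derivative is $(D'_s)^j = D_s (\mathrm{div}\,V_j)(Y_s)$; since $V\in C^3_b$ and $Y\in C^3_b$ it is actually $L^\infty(\mathbb{R}^d)$-valued (indeed pointwise bounded), and Lemma \ref{roughLiouvilleLemma} gives the uniform two-sided bound
$$
C^{-1}\exp\{-CN_{[0,T]}(\mathbf{Z})\}\leq D_s(x)\leq C\exp\{CN_{[0,T]}(\mathbf{Z})\},
$$
with $\|(D,D')\|_{\alpha,Z;L^\infty}$ controlled by a polynomial in $\|\mathbf{Z}\|_\alpha$ times $\exp\{CN_{[0,T]}(\mathbf{Z})\}$. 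The preceding lemma (proving $\zeta(Y)>0$) gives exactly the uniform lower bound $\zeta(Y)\geq c\exp\{-CN_{[0,T]}(\mathbf{Z})\}$ required to apply Lemma \ref{L2Composition}.

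Applying Lemma \ref{L2Composition} to $(Y,Y')$ and $g\in W^{2,p}$ then produces $(g(Y),\nabla g(Y)Y')\in\mathscr{D}^{2\alpha}_Z([0,T];L^p)$ with $L^p$ controlled-path norm bounded by $C\,\zeta(Y)^{-1/p}(1+\|\mathbf{Z}\|_\alpha)^k\|g\|_{2,p}$, which absorbs into the advertised $\exp\{CN_{[0,T]}(\mathbf{Z})\}(1+\|\mathbf{Z}\|_\alpha)^k$ bound. Lemma \ref{abstractMultiplier} (applied with $E=L^p(\mathbb{R}^d)$ and $\mathcal{L}(E)\supset L^\infty(\mathbb{R}^d)$ via pointwise multiplication) then multiplies by $D$: we obtain
$$
\bigl(g(Y)D,\ \nabla g(Y)Y' D + g(Y)D'\bigr)\ \in\ \mathscr{D}^{2\alpha}_Z([0,T];L^p(\mathbb{R}^d)),
$$
with norm bounded by the product of the two controlled-path bounds, yielding the stated estimate after absorbing $\|V\|_{\infty}$ factors.

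Finally I would verify algebraically, componentwise in $j$, that the resulting Gubinelli derivative coincides with the one in the statement:
$$
\nabla g(Y_s) V_j(Y_s) D_s + g(Y_s) D_s (\mathrm{div}\,V_j)(Y_s) = D_s\bigl[\nabla g\cdot V_j + g\,\mathrm{div}\,V_j\bigr](Y_s) = D_s\,\mathrm{div}(gV_j)(Y_s),
$$
which is exactly the claimed expression. The main obstacle is the bookkeeping in the first step: certifying that $Y$ and $D$ really are controlled paths in the correct function spaces with Gubinelli derivatives $V(Y)$ and $D\,\mathrm{div}\,V(Y)$, together with deriving the explicit $\exp\{CN_{[0,T]}(\mathbf{Z})\}(1+\|\mathbf{Z}\|_\alpha)^k$ growth — once these ingredients are in place, the rest is a straightforward chain of applications of Lemmas \ref{L2Composition} and \ref{abstractMultiplier} together with the Leibniz identity above.
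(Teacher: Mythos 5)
Your proposal is correct and follows essentially the same route as the paper: decompose the object as the product of the composition $g(\Psi_{\cdot,T}^{-1})$ and the Jacobian determinant, control the first via Lemma \ref{L2Composition} (using the non-degeneracy bound $\ND(Y)\geq C\exp\{-CN_{[0,T]}(\ZZ)\}$ and Lemma \ref{roughLiouvilleLemma}), the second as an $L^{\infty}$-valued controlled path, multiply via Lemma \ref{abstractMultiplier}/Corollary \ref{L2Multiplier}, and identify the Gubinelli derivative through the Leibniz identity $\nabla g\cdot V_j+g\,\mathrm{div}\,V_j=\mathrm{div}(gV_j)$. The only cosmetic difference is that the paper sets everything up for the time-reversed path $\Psi_{T-\cdot,T}^{-1}$ controlled by $\overleftarrow{Z}$ and then undoes the reversal at the end (citing \cite[Lemma 30]{DFS}), whereas you work directly in forward time, which is equivalent.
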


\begin{proof}
Since $\Psi_{T - \cdot,T}^{-1}(y)$ satisfies the equation
$$
d_s\Psi_{T -s,T}^{-1}(y) = V(\Psi_{T - s,T}^{-1}(y)) d\ZZ_{T - s} .
$$
If we define $A := g( \Psi_{T - \cdot,T}^{-1})$ and $(A')^j = \nabla g( \Psi_{T - \cdot,T}^{-1}) V_j( \Psi_{T - \cdot,T}^{-1})$
we have by Lemma \ref{L2Composition} that 
$$
(A,A') \in \D_{\overleftarrow{Z}}^{2 \alpha}([0,T]; L^p(\R^d))
$$
where $\overleftarrow{Z}_t := Z_{T-t}$.

Moreover, with $B_r := det( \nabla \Psi_{T-r,T}^{-1})$ and $(B_r')^j = det( \nabla \Psi_{T - r,T}^{-1}) div(V_j)(\Psi_{r,T}^{-1})$ we have 
$$
(B,B') \in \D^{2 \alpha}_{\overleftarrow{Z}} ([0,T]; L^{\infty}(\R^d)) . 
$$

Notice now that 
$$
A_r(B_r')^j + (A_r')^jB_r = div(g V_j)(\Psi_{T- r,T}^{-1}) det(\nabla \Psi_{T - r,T}^{-1})
$$

By Corollary \ref{L2Multiplier} we have
$$
\left( g(\Psi_{T - \cdot,T}^{-1})det( \nabla \Psi_{T - \cdot,T}^{-1}), div(g V_j)(\Psi_{T- \cdot,T}^{-1}) det(\nabla \Psi_{T - \cdot,T}^{-1}) \right) \in \D_{\overleftarrow{Z}}^{2 \alpha}([0,T]; L^p(\R^d)) ,
$$
and if we reverse the time we get by \cite[Lemma 30]{DFS}

\begin{align*}
\big\| \Big(g(\Psi_{ \cdot,T}^{-1}) & det( \nabla \Psi_{ \cdot,T}^{-1}),   div(g V_j)(\Psi_{ \cdot,T}^{-1})  det(\nabla \Psi_{ \cdot,T}^{-1}) \big) \big\|_{\alpha, Z; L^p(\R^d)} \\ 
 & \leq \| \left(g(\Psi_{T - \cdot,T}^{-1})det( \nabla \Psi_{T - \cdot,T}^{-1}), div(g V_j)(\Psi_{T- \cdot,T}^{-1}) det(\nabla \Psi_{T - \cdot,T}^{-1}) \right) \|_{\alpha, \overleftarrow{Z}; L^p(\R^d)} (1 + \| \ZZ \|_{\alpha}) \\
& \leq  C \exp \{ C N_{  [0,T]}( \ZZ) \}  ( 1 + \| \ZZ\|_{\alpha})^k  \\
& \times ( \|B'_0\|_{L^{\infty}(\R^d)} + \|(B,B')\|_{\alpha, Z; L^{\infty}(\R^d)} ) (  \|V\|_{\infty} + \|(\Psi_{T - \cdot,T}^{-1},V(\Psi_{T - \cdot,T}^{-1}))\|_{\alpha, Z})^2 \|g\|_{2,p}  \\
& \leq  C \exp \{ C N_{  [0,T]}( \ZZ) \}  ( 1 + \| \ZZ\|_{\alpha})^k  \\
& \times ( 1 + \exp \{ C N_{  [0,T]}( \ZZ) \} (\|\ZZ\|_{\alpha}^2 \vee \|\ZZ\|_{\alpha}^3 ) (  \|V\|_{\infty} + (1 + \|\ZZ\|_{\alpha})^3)^2 \|g\|_{2,p} \\
& \leq  C \exp \{ C N_{  [0,T]}( \ZZ) \}  ( 1 + \| \ZZ\|_{\alpha})^{k}(1 + \|V\|_{\infty})^2 \|g\|_{2,p} .
\end{align*}
Above the constants $C$ and $k$ may vary from line to line.

%

\end{proof}

%
%

%
%
%

\end{document}